\newtheorem{definition}{Definition}[section]
\newtheorem{theorem}{Theorem}[section]
\newtheorem{lemma}{Lemma}[section]
\newtheorem{proposition}{Proposition}[section]
\newtheorem{corollary}{Corollary}[section]
\newtheorem{remark}{Remark}[section]
\newtheorem{conjecture}{Conjecture}[section]
\numberwithin{equation}{section}
\newenvironment{customthm}[1]
  {\innercustomthm}
  {\endinnercustomthm}
\newenvironment{newcustomthm}[1]
  {\innernewcustomthm}
  {\endinnernewcustomthm}
\newenvironment{customthmA}[1]
  {\innercustomthmA}
  {\endinnernewcustomthm}
\newcommand{\doublint}{\int\!\!\!\int}
\title[Real bounds and qs-rigidity of multicritical circle maps]{Real bounds and quasisymmetric rigidity of multicritical circle maps}
\author{Gabriela Estevez and Edson de Faria}
\subjclass[2010]{Primary: 37E10; Secondary:  37E20, 37F10, 37A05, 37C15.}
 \keywords{Real bounds, multicritical circle maps, quasisymmetric rigidity, dynamical partitions.}
\address{Instituto de Matem\'atica e Estat\'istica, Universidade de S\~ao Paulo}
\curraddr{Rua do Mat\~ao 1010, 05508-090, S\~ao Paulo SP, Brasil}
\email{gestevez@ime.usp.br}
\email{edson@ime.usp.br}
\thanks{This work has been supported by ``Projeto Tem\'atico Din\^amica em Baixas Dimens\~oes'' 
FAPESP Grant 2011/16265-2, and by CAPES (PROEX)}
\begin{document}

\maketitle

\begin{abstract}
Let $f, g:S^1\to S^1$ be two $C^3$ critical homeomorphisms of the circle with the same irrational rotation number and the 
same (finite) number of critical points, all of which are assumed to be non-flat, of power-law type. 
In this paper we prove that if $h:S^1\to S^1$ 
is a topological conjugacy between $f$ and $g$ and $h$ maps the critical points of $f$ to the critical points of $g$, then 
$h$ is quasisymmetric. When the power-law exponents at all critical points are integers, 
this result is a special case of a general theorem recently proved by T.~Clark and S.~van Strien \cite{CS}. However, unlike 
the proof given in \cite{CS}, which relies on heavy complex-analytic machinery, our proof uses purely real-variable methods, and is valid 
for non-integer critical exponents as well. We do not require $h$ to preserve the power-law exponents at corresponding critical points.  
\end{abstract}

\section{Introduction}\label{sec:intro}
%In dynamics, a {\it rigidity theorem\/} is one in which a combinatorial or topological equivalence between 
%two systems yields a geometric (or smooth) equivalence between those systems.  

In dynamics, a {\it rigidity theorem\/} is one in which a {\it weak\/} equivalence between two systems implies, under a minimum set of hypotheses, 
a {\it strong\/} equivalence between those systems. For instance, a topological conjugacy (or perhaps even a combinatorial equivalence) 
between two systems may be proven in fact to be a differentiable conjugacy, provided such systems are smooth enough. 

In the context of homeomorphisms of the circle, there are well-developed rigidity theories for smooth diffeomorphisms 
(see \cite{H}, \cite{Y}), as well as for smooth circle homeomorphisms with exactly one non-degenerate critical point 
(see \cite{dFdM, dFdM2, KT, KY} for the real-analytic case, and \cite{Gua,GdM} for the case of $C^3$ homeos). 
By contrast, the theory for circle homeomorphisms with two or more critical points is far from being well-developed. %(Add more stuff)

For smooth one-dimensional systems,  there is oftentimes a close relationship between rigidity and {\it renormalization convergence\/}. 
The {\it first renormalization\/} $Rf$ of a map $f$ around some special point in phase space (usually a critical point) is given by the first return map 
of $f$ to a certain neighborhood of that special point, suitably rescaled. Renormalization can be seen as a (non-linear) operator acting on an 
appropriate space of such maps. In particular, some maps may be {\it infinitely renormalizable\/}, in the sense that the successive 
renomalizations $R^nf=R(R^{n-1}f)$ ($n\geq 1$) are well-defined. It so happens that, under suitable hypotheses, if two topologically conjugate maps 
$f$ and $g$ are infinitely renormalizable, then the $C^0$ distance between their successive renormalizations converges to zero. 
When this is the case, the general {\it ansatz\/} is that an exponential rate of convergence should yield a smooth conjugacy between $f$ and $g$.  

In the presence of critical points, say for real-analytic systems, the strategy towards rigidity usually 
involves (a variation of) the following steps.
\begin{enumerate}
 \item[(1)] Get {\it real a-priori bounds\/}: geometric bounds on the critical orbits.
 \item[(2)] Use the real bounds in (1) to promote the topological conjugacy between the two systems 
to a conjugacy with some mild geometric control (such as {\it quasisymmetry\/}, see \S \ref{sec:quasisym} below).
 \item[(3)] Complexify the given real dynamical systems and use the real bounds in (1) and the geometric 
control in (2) in order to get {\it complex a-priori bounds\/} for the complexified systems. These bounds 
(usually bounds on moduli of certain annuli) yield a strong form of compactness.
 \item[(4)] Extend the renormalization operator to the complexified dynamical systems. 
 \item[(5)] Use the bounds and compactness in (3) and some suitable infinite-dimen\-sio\-nal version of 
Schwarz's lemma to establish the desired contraction property of the underlying renormalization operator. 
\end{enumerate}

This strategy was put forth by Sullivan in \cite{Su} and it has motivated several breakthroughs in 
one-dimensional dynamics, especially in the study of real-analytic unimodal maps of the interval -- see 
the seminal works by McMullen \cite{McM}, Lyubich \cite{Ly1,Ly2}, and Avila-Lyubich \cite{AL} (for the case 
of $C^r$ unimodal maps, see also \cite{FMP}). 

Sullivan's strategy has been completely worked out also for critical circle 
maps having a unique critical point (of cubic type): see \cite{dF,dFdM,dFdM2,Gua,GdM,H,KT,KY,S,Yam1,Yam2,Yam,Y}. 
For maps with two or more critical points, however, much 
remains to be done. Only steps (1) and (2) have been established so far. Step (1) follows from unpublished work by 
Herman \cite{H} (based on previous work by Swiatek \cite{S}). In a recent breakthrough, Clark and van Strien \cite{CS}
establish step (2) in a very general context comprising multimodal maps of the interval and critical 
circle homeomorphisms with several (non-flat) critical points  -- henceforth called {\it multicritical circle maps\/}. 
Their proof is deep and rather involved, and it uses complex-analytic tools. 
They are primarily interested in the interval multimodal case, which is much more difficult than the 
multicritical circle case even at the topological or combinatorial level. 

Our goal in the present paper is to establish step (2) for multicritical circle maps 
using {\it purely real-variable techniques\/}. This main result can be stated as follows.

\begin{customthmA}{A}\label{maintheorem}
 Let $f,g:S^1\to S^1$ be two $C^r$ ($r\geq 3$) multicritical circle maps with the same irrational rotation number 
and the same number of (non-flat) critical points, and 
let  $h:S^1\to S^1$ be homeomorphism conjugating $f$ to $g$, {\it i.e.\/} such that 
$h\circ f=g\circ h$. If $h$ maps each critical point of $f$ to a corresponding critical point of $g$, 
then $h$ is quasisymmetric. 
\end{customthmA}

Unlike \cite{CS}, where all critical points are assumed to be of integral power-law type, 
here it is not assumed that the critical exponents are integers. Non-integral power laws 
are relevant, as they come about naturally {\it e.g.\/} in the study of certain one-dimensional maps arising as return maps 
to cross-sections of the Lorenz flow \cite{GW}. 

The proof of Theorem \ref{maintheorem} will be given in \S \ref{secmainthm}. Along the way, we will re-establish 
the real bounds, {\it i.e.\/} step (1), imitating the approach used in \cite{dFdM}. We stress that, in the case of circle maps 
having a {\it single\/} (non-flat) critical point, Theorem \ref{maintheorem} is due to Yoccoz 
(see the unplublished  manuscript \cite{Y2}, or \cite{dFdM} for a published account).   

We remark that the existence of a homeomorphism $h$ conjugating $f$ and $g$ as above is a well-known theorem also due 
to Yoccoz \cite{yoccoz}: indeed, every multicritical circle map without periodic points is topologically conjugate 
to an irrational rotation (in particular, such a multicritical circle map is uniquely ergodic). 
However, there is no reason why a conjugacy $h$ between $f$ and $g$ should map critical points 
of $f$ to critical points of $g$. A necessary and sufficient condition for this to happen can be 
stated as follows. Let $\mu_f$ and $\mu_g$ denote the unique Borel probability measures invariant under 
$f$ and $g$, respectively. Let $c_0(f),c_1(f),\ldots,c_{N-1}(f)\in S^1$ be the critical points of $f$, and
$c_0(g),c_1(g),\ldots,c_{N-1}(g)\in S^1$ be the critical points of $g$; both sets of critical points are 
assumed to be cyclically ordered (say counterclockwisely). 
Then a homeomorphism $h$ conjugating $f$ to $g$ and satisfying $h(c_i(f))=c_i(g)$ for all $0\leq i\leq  N-1$ 
exists if and only if $\mu_f[c_{i-1}(f),c_i(f)]=\mu_g[c_{i-1}(g),c_i(g)]$ for all for all $1\leq i\leq N-1$. 
The proof is straightforward. For more on the ergodic theory of multicritical circle maps, see \cite{dFG}. 

Still within the realm of circle maps, other interesting (partial) rigidity results have been obtained. 
For instance, in a recent paper \cite{Pa}, Palmisano proved that $C^2$ weakly order-preserving circle maps 
with a flat interval are quasi-symmetrically rigid in their non-wandering sets, provided their rotation number 
is of bounded type and a certain bounded geometry hypothesis is satisfied. 

\subsection{How the paper is organized} In \S \ref{prelim}, we introduce the basic concepts and well-known results 
to be used in the rest of the paper, including the cross-ratio distortion tools which are 
so ubiquitous in one-dimensional dynamics. In \S \ref{sec:realbounds}, we establish {\it real a-priori bounds\/} for 
multicritical circle maps. These bounds show that the dynamical partitions of a multicritical circle map have bounded 
geometry. The result, in slightly different form, is known from unpublished notes by Herman \cite{H}, based 
on previous work by Swiatek \cite{S}, and a nice exposition of Herman's approach 
is given by Petersen \cite{P} (see also the translation of Herman's notes by Ch\'eritat). 
Nevertheless, we provide a different proof, based on the one given for unicritical circle maps by 
de Faria and de Melo in \cite{dFdM}. The original contribution of the present paper starts in \S \ref{sec:geom}. 
There, we prove two crucial lemmas about the dynamical partitions of a multicritical circle map. 
The first lemma states that, if two atoms belonging to the level $n$ dynamical partitions associated to two distinct 
critical points intersect, then they must have comparable lenghts. The second states that, if an atom belonging to
a dynamical partition (at a certain level $n$) contains critical points of the appropriate (level $n$) return map, 
then it must be relatively large ({\it i.e.,} it must be comparable to the atom of the partition at the previous 
level that contains it). These two key facts are used to build a sequence of auxiliary partitions which are intermediate 
refinements of the dynamical partitions of a (chosen) critical point of the original map. In \S \ref{secmainthm}, 
this sequence of auxiliary partitions is used to build a {\it fine grid\/} (as originally defined in \cite{dFdM}). In the same 
section, we prove a criterion for a conjugacy between two given multicritical circle maps to be quasisymmetric: 
if such conjugacy yields an isomorphism between the fine grids of both maps, then it is quasisymmetric. 
From this criterion (stated but not proved in \cite{dFdM}) and the fine grid construction, our Theorem \ref{maintheorem} 
easily follows. Finally, in \S \ref{sec:final}, we state a conjecture concerning the smooth rigidity of 
multicritical circle maps which is naturally inspired by what happens in the unicritical case. 

\subsection{An important remark} In this paper, we require our circle maps to be at least $C^3$-smooth. The expert 
reader will certainly be familiar with the fact that the cross-ratio distortion tools we use, and consequently the real bounds we prove in 
\S \ref{sec:realbounds}, are valid in more generality. Indeed, for such purposes it suffices to assume that such a map 
$f$ is $C^1$ (with non-flat critical points) and that $\log{Df}$ satisfies a Zygmund condition (see \cite[Ch IV, \S 2]{dMvS}). 
However, in order to build the fine grid mentioned above, and control its geometry, we make fundamental use of the so-called 
{\it Yoccoz Lemma\/} (see Lemma \ref{lemyoccoz}). This lemma requires a negative Schwarzian condition, and therefore the $C^3$ hypothesis 
is, under current technology, unavoidable.

\section{Preliminaries}\label{prelim}

In this section we present the basic definitions and basic (well-known) results to be used throughout. 
For the facts presented here, plus general background on one-dimensional dynamics and much more, the reader should 
consult \cite{dMvS}. 

\subsection{Circle homeomorphisms} \label{circhom}
For us, the {\it unit circle\/} is the affine one-manifold $S^1=\mathbb{R}/\mathbb{Z}$. 
The dynamical systems we are interested in are {\it orientation-preserving homeomorphisms\/} of the unit circle. 
Let $f:S^1\to S^1$ be such a homeomorphism. It is well-known since Poincar\'e that the relative order of 
the points $f^n(x)$ ($n\in \mathbb{Z}$) making up the full-orbit of $x$ under $f$ on the circle is independent of 
the point $x$. If we count the average number of times that the finite piece of orbit $\{x,f(x),\ldots, f^n(x)\}$ 
winds around $S^1$ and let $n\to \infty$, we get a limiting number which is also independent of $x$, and is called 
the {\it rotation number\/} of $f$. The rotation number is a {\it topological invariant\/}, in the sense that 
any two topologically conjugate circle homeomorphisms always have the same rotation number (this is true even if 
the homeomorphisms are merely topologically semi-conjugate{\footnote{Given $f,g: S^1\to S^1$, we say that $f$ is {\it topologically semi-conjugate\/} 
to $g$ if there exists a continuous map $h:S^1\to S^1$ such that $h\circ f=g\circ h$.}}). 
As it happens, every orientation-preserving homeomorphism of the unit circle is topologically semi-conjugate to a rotation.

Given a homeomorphism $f$ as above, let $\rho$ be the rotation number of $f$, and consider its continued fraction expansion
\begin{equation*}
      \rho(f)= [a_{0} , a_{1} , \cdots ]=   
      \cfrac{1}{a_{0}+\cfrac{1}{a_{1}+\cfrac{1}{ \ddots} }} \ ,
    \end{equation*}
which is finite or infinite according to whether $\rho$ is rational or irrational, respectively. 
We are only interested in the irrational case, which corresponds to homeomorphisms without periodic orbits. 
Under this hypothesis, we may recursively define  an infinite sequence of 
{\it return times\/} associated to $\rho$ by
\begin{equation*}
 q_{0}=1, \hspace{0.4cm} q_{1}=a_{0}, \hspace{0.4cm} q_{n+1}=a_{n}q_{n}+q_{n-1} \hspace{0.3cm} \text{for $n \geq 1$} .
\end{equation*}
These numbers are precisely the denominators of the reduced fractions obtained by truncating the continued fraction expansion of $\rho$ 
at the $n-th$ level, that is to say
\begin{equation*}
 \frac{p_{n}}{q_{n}}= [a_{0} , a_{1} , \cdots , a_{n-1}]=   
      \cfrac{1}{a_{0}+\cfrac{1}{a_{1}+\cfrac{1}{  \ddots \cfrac{1}{a_{n-1} }} }}
\end{equation*}
These return times are characterized by the following property: For any $x \in S^{1}$, the closed interval $I_n(x)\subset S^1$ with endpoints $x$ and $f^{q_{n}}(x)$ 
containing the point $f^{q_{n+2}}(x)$ contains no other iterate $f^{j}(x)$ with $1 \leq j \leq q_{n}-1$. When $f$ is a rigid rotation (by an angle which is 
an irrational multiple of $2\pi$), the sequence $(q_n)$ is such that $|f^{q_{n+1}}(x)-x|< |f^{q_{n}}(x)-x|$ for all $n\geq 0$; for this reason, the 
return times $q_n$ are also called {\it closest return times\/}, and the points $f^{q_n}(x)$ are the {\it closest returns\/} of the orbit of $x\in S^1$ to 
$x$. These returns alternate around $x$, {\it i.e.,\/} we have either $f^{q_n}(x)<x<f^{q_{n+1}}(x)$ or $f^{q_{n+1}}(x)<x<f^{q_{n}}(x)$, in the natural order 
on $S^1$ induced from $\mathbb{R}$. 

The most basic combinatorial fact about a circle homeomorphism $f$ (topologically conjugate to an irrational rotation) to be used throughout is the following. 
For each $n\geq 0$ and each $x\in S^1$, the collection of intervals
\[
 \mathcal{P}_n(x)\;=\; \left\{f^i(I_n(x)):\;0\leq i\leq q_{n+1}-1\right\} \;\cup\; \left\{f^j(I_{n+1}(x)):\;0\leq j\leq q_{n}-1\right\} 
\]
is a {\it partition of the unit circle (modulo endpoints)\/}, called the {\it $n$-th dynamical partition\/} associated to the point $x$. 
The intervals of the form $f^i(I_n(x))$ are called {\it long\/}, whereas those of the form $f^j(I_{n+1}(x))$ are called {\it short\/}. 
For each $n$, the partition $\mathcal{P}_{n+1}(x)$ is a (non-strict) refinement of $\mathcal{P}_{n}(x)$.  

\subsection{Multicritical circle maps}\label{sec:multi}
In this paper we study homeomorphisms of the circle of a very special kind, namely {\it multicritical circle maps\/}. 
Here is the formal definition.

\begin{definition}\label{def:multicritic}
A multicritical circle map  is an orientation-preserving homeomorphism $f: S^{1}\rightarrow S^{1}$ of 
class $C^{r}$, $r\geq 3$, 
having finitely many critical 
points $c_{0}, \cdots, c_{N-1}$ satisfying the following.  
There exist neighbourhoods $W_{i}\subseteq S^{1}$ of each $c_{i}$ such that
\begin{enumerate}
 \item [(1)] The map $f$ has negative Schwarzian derivative on $W_{i} \setminus \{ c_{i}\}$.
 \item [(2)] There exist constants $0<\alpha_{i}< \beta_{i}$ and  $s_{i}>1$ such that for all $x \in W_{i}$
 \begin{equation*}
  \alpha_{i}|x-c_{i}|^{s_{i}-1} < f'(x) < \beta_{i} |x-c_{i}|^{s_{i}-1}.
 \end{equation*}
 \item [(3)] The variation of $\log Df$ on $S^{1} \setminus \cup_{i=0}^{N-1}W_{i}$ is bounded by $\rho >0.$
\end{enumerate}
\end{definition}

The following property holds around each critical point of a multicritical circle map. 

\begin{newcustomthm}{}
For all $x,y \in S^1$ with  $|x-c_{i}| \leq |y-c_{i}|$, we have
 \begin{equation*}
  \dfrac{|f(x)-f(c_{i})| }{|f(y)-f(c_{i})|}  \leq \gamma_{i} \left(\dfrac{|x-c_{i}|}{|y-c_{i}|} \right)^{s_{i}} ,
 \end{equation*}
where $\gamma_{i}>0$ is a constant depending only on $\alpha_{i}, \beta_{i}, \rho$. 
\end{newcustomthm}

The proof follows directly from property (2) in the above definition. We remark {\it en passant\/} that each $s_i$ 
is called the {\it criticality\/} or {\it power-law type\/} of the corresponding critical point $c_i$. 

As already mentioned in the introduction, a well-known theorem due to Yoccoz \cite{yoccoz} states that every multicritical circle map 
without periodic points is topologically conjugate to an irrational rotation. This basic fact will be used throughout. 

\subsection{Cross-ratios}
There are several types of cross-ratios used in 
one-dimensional dynamics. We describe here two of the most ubiquitous (but shall use only one of them). 

Let us denote by $N$ either the unit circle $S^1$ or the real line $\mathbb{R}$. Given two intervals 
$M\subset T\subset N$ with $M$ compactly contained in $T$, let us denote by $L$ and $R$ the two connected 
components of $T\setminus M$. We define the $a$-cross-ratio and the $b$ cross-ratio of the pair $(M,T)$, 
respectively, as follows:
\begin{equation*}
a(M,T)= \frac{|M| |T|}{|L| |R|}\ ,\ 
b(M,T)= \frac{|L| |R|}{|L\cup M| |M \cup R|} \ .
\end{equation*}
One easily checks that $b(M,T)^{-1}=1+ a(M,T)$. Both cross-ratios are preserved by Moebius transformations;  
 the latter is weakly contracted by maps with negative Schwarzian derivative (see below), whereas the former 
is weakly expanded. 

Unlike \cite{dFdM}, where the $a$-cross-ratio was used throughout, in the present paper it will be more 
convenient to use the $b$-cross-ratio. The latter has the advantage that its logarithm is 
given by the {\it Poincar\'e length\/} of $M$ inside $T$. More precisely,
\begin{equation}\label{bratio}
 \log{b(M,T)}\;=\; -\int_{M} \rho_T(x)\,dx \ ,
\end{equation}
where $\rho_T(x)$ is the {\it Poincar\'e density\/} of $T=[\alpha,\beta]$, given by
\begin{equation*}
 \rho_T(x)\;=\;\frac{\beta-\alpha}{(x-\alpha)(\beta-x)}\ .
\end{equation*}

\subsection{Distortion tools}\label{sec:distort}
The main tool used in this paper is {\it cross-ratio distortion\/}. 
Let $f:N\to N$ be a continuous map, and let $U\subseteq N$ be an open set such that $f|_{U}$ is a 
homeomorphism onto its image. If 
$M\subset T\subset U$ are intervals, with $M$ {\it compactly contained\/} in $T$ (written $M\Subset T$),  
the \textit{cross-ratio distortion} of the map $f$ on the pair of intervals $(M,T)$ is defined to be the ratio
\begin{equation*}
 D(f;M,T)= \frac{b(f(M),f(T))}{b(M,T)}.
\end{equation*}
If $f|_{T}$ is the restriction of a projective (Moebius) transformation, then one can easily see that $D(f;M,T)=1$. 
Also, when $f|_{T}$ is a diffeomorphism onto its image and $\log{Df}|_{T}$ has {\it bounded variation\/} in $T$, 
then an easy calculation using the mean value theorem shows that $D(f;M,T)\leq e^{2V}$, 
where $V=\mathrm{Var}(\log{Df}|_{T})$.   

Now, if $f|_{U}$ is a diffeomorphism onto its image, we define $\delta_f:U\times U\to \mathbb{R}$ by
\[
 \delta_f(x,y)\;=\;\left\{\begin{matrix} {\displaystyle{\frac{f(x)-f(y)}{x-y}}} & \ \ \textrm{if}\ \ x\neq y  \\
                                           {} & {} \\
                                          {\displaystyle{f'(x)}}  &   \ \ \textrm{if}\ \ x= y 
                           \end{matrix}\right.
\]
If $f$ is $C^3$ then $\delta_f$ is $C^2$, and the following facts are straightforward.{\footnote{The mixed partial derivative 
appearing in \eqref{logdist} is, up to a multiplicative constant, what one calls the {\it bi-Schwarzian\/} 
of $f$. More precisely, the bi-Schwarzian $B_f$ is defined as 
\[
 B_f(x,y)\;=\; 6\,\frac{\partial^2\delta_f}{\partial x\partial y}(x,y)\ .
\]
Clearly, $B_f(x,y)\to Sf(x)$ as $y\to x$, hence the name. The bi-Schwarzian is a cocycle, in the sense that it satifies a chain rule: 
If $f,g$ are $C^3$ maps for which $f\circ g$ makes sense, then $B_{f\circ g}(x,y)\;=\;g'(x)g'(y)B_f(g(x),g(y)) + B_g(x,y)$. 
Thus is, of course, entirely consistent with the chain rule for the Schwarzian. We will make no further use of the bi-Schwarzian in this paper.}}
\begin{enumerate}
 \item[(i)] For all $M\subset T\subset U$,
\begin{equation}\label{logdist}
 \log{D(f;M,T)}\;=\; \doublint_{M\times T} \frac{\partial^2\delta_f}{\partial x\partial y}\,dxdy\ .
\end{equation}
 \item[(ii)] For all $x\in U$ we have
\[
 \lim_{y\to x} \frac{\partial^2\delta_f}{\partial x\partial y}(x,y)\;=\; \frac{1}{6}Sf(x)\ ,
\]
where 
\[
 Sf\;=\; \left(\frac{f''}{f'}\right)'-\frac{1}{2}\left(\frac{f''}{f'}\right)^{2}
\]
is the Schwarzian derivative of $f$. 
\end{enumerate}

These two facts put together yield the following.

\begin{lemma}
 If $f$ is $C^3$ and $Sf<0$, then for all sufficiently small intervals $M\subset T$ we have 
$D(f;M,T)<1$. \qed
\end{lemma}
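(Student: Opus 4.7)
The plan is to combine the two displayed facts (i) and (ii) in a direct continuity argument. Since $f$ is assumed $C^3$, the map $\delta_f$ is $C^2$, so in particular the mixed partial derivative
\[
\Phi(x,y) \;=\; \frac{\partial^2\delta_f}{\partial x\partial y}(x,y)
\]
is a \emph{continuous} function on $U\times U$. Fact (ii) tells us that its restriction to the diagonal is precisely $\Phi(x,x)=\tfrac{1}{6}Sf(x)$, which by hypothesis is strictly negative everywhere on $U$.

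The first step is to use continuity of $\Phi$ together with the hypothesis $Sf<0$ to produce a neighborhood of the diagonal on which $\Phi$ is negative. Concretely, fix any point $x_0\in U$; then $\Phi(x_0,x_0)=\tfrac{1}{6}Sf(x_0)<0$, so by continuity there exists an open interval $J\ni x_0$ with $\overline{J}\subset U$ such that $\Phi(x,y)<0$ for all $(x,y)\in J\times J$. (If one wants a uniform version over a compact $K\subset U$, one uses that $Sf$ attains a negative maximum on $K$ and that $\Phi$ is uniformly continuous on a compact neighborhood of the diagonal in $K\times K$, producing a single $\varepsilon>0$ such that $\Phi<0$ on an $\varepsilon$-neighborhood of the diagonal in $K\times K$.)

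The second step is simply to apply formula (i). Suppose $M\subset T$ are intervals with $T\subset J$ (this is what "sufficiently small" means: $T$ lies inside the neighborhood $J$ produced above). Then $M\times T\subset J\times J$, so the integrand $\Phi$ is strictly negative throughout the domain of integration. Since $M$ and $T$ are nondegenerate intervals, the product measure of $M\times T$ is positive, and therefore
\[
\log D(f;M,T)\;=\;\doublint_{M\times T}\Phi(x,y)\,dx\,dy \;<\;0,
\]
which immediately gives $D(f;M,T)<1$, as required.

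No serious obstacle appears here: the entire argument is a continuity-of-a-$C^0$-function argument applied to the explicit integral representation (i), using only that the diagonal value of $\Phi$ is $\tfrac{1}{6}Sf<0$. The only minor point deserving care is the logical meaning of "sufficiently small," and the remark in parentheses above records how to make this uniform over compact subsets of $U$ should one later need a version of the statement that is uniform in the base point.
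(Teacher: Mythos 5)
Your argument is exactly the one the paper intends: the lemma is stated immediately after facts (i) and (ii) with the remark that ``these two facts put together yield the following,'' and you have supplied precisely that reasoning --- continuity of the $C^0$ function $\partial^2\delta_f/\partial x\partial y$, negativity of its diagonal values $\tfrac{1}{6}Sf$, and the integral representation \eqref{logdist} of $\log D(f;M,T)$. The proof is correct, complete, and takes the same approach as the paper.
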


In other words, a map with negative Schwarzian derivative contracts (small) cross-ratios. 

%\begin{remark}
% \emph{}
%\end{remark}

Now we have the following fundamental result. Given a family of intervals $\mathcal{F}$ on $N$ and a positive integer $m$, 
we say that $\mathcal{F}$ has {\it multiplicity of intersection at most $m$\/} if each $x\in N$ belongs to at most $m$ 
elements of $\mathcal{F}$. 

\begin{customthm}{} Given a multicritical critical circle map $f:S^1\to S^1$, there exists a constant $C>1$, depending only on $f$, 
such that the following holds. If $M_i\Subset T_{i} \subset S^1$, where $i$ runs through some finite set of indices $\mathcal{I}$, 
are intervals on the circle such that the family $\{T_i: i\in \mathcal{I}\}$ 
has multiplicity of intersection at most $m$, then 
 \begin{equation}\label{crossprod}
  \prod_{i \in \mathcal{I}} D(f;M_{i},T_{i}) \leq C^{m}\ .
 \end{equation}
\end{customthm}

\begin{proof}[Sketch of proof] 
Let $\mathcal{U}=\bigcup W_i$, where the $W_i$'s are as in Definition \ref{def:multicritic}, and let $\mathcal{V}$ 
be an open set with $\mathcal{U}\cup \mathcal{V}=S^1$ whose closure does not contain any critical point of $f$. 
We assume without loss of generality that the maximum length of the $T_i$'s is smaller than the Lebesgue number of the covering 
$\{\mathcal{U},\mathcal{V}\}$. 
Write the product on the left-hand side of \eqref{crossprod} as $P_1\cdot P_2$, where 
\[
 P_1\;=\;\prod_{T_i\subseteq \mathcal{V}}  D(f;M_{i},T_{i})\ \ \ ,\ \ \ P_2\;=\;\prod_{T_i\subseteq \mathcal{U}}  D(f;M_{i},T_{i})\ .
\]
Then on the one hand $P_1\leq e^{2mV}$, where $V=\mathrm{Var}(\log{Df}|_{\mathcal{V}})$. On the other hand, 
the factors making up $P_2$ are of two types: those such that $f|_{T_i}$ is a diffeomorphism onto its image, and those 
such that $T_i$ contains some critical point of $f$. All factors of the first type are diffeomorphisms with negative Schwarzian 
and therefore satisfy $D(f;M_i,T_i)<1$. Factors of the second type are controlled by the Power Law 
(or equivalently by property (2) in Definition \ref{def:multicritic}), and since there are at most $mN$ such factors 
(where $N$ is the number of critical points of $f$), the result follows. 
For more details, see \cite{S}.
\end{proof}

Finally, there will be only two moments in this paper, namely in the proofs of Lemma \ref{negSret} and Corollary \ref{balancedbridges}, where 
we will need the so-called {\it Koebe distortion principle\/}, a well-known tool for controlling non-linearity. Here is the statement.

\begin{lemma}[Koebe distortion principle] \label{koebe}
For each $\ell,\tau>0$ and each multicritical circle map $f$ there exists a 
constant $K=K(\ell,\tau,f)>1$ with the following property. If
$T$ is an interval such that $f^k|_{T}$ is a diffeomorphism onto its image and if $\sum_{j=0}^k |f^j(T)|\leq \ell$, 
then for each interval $M\subset T$ for which $f^k(T)$ contains a $\tau$-scaled neighborhood
of $f^k(M)$ one has
\[
\frac{1}{K}\leq \frac{|Df^k(x)|}{|Df^k(y)|}\leq K
\]
for all $x,y\in M$. 
\end{lemma}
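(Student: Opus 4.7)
The strategy is to compose the Cross-Ratio Inequality along the orbit $T \to f(T) \to \cdots \to f^k(T)$, combine the resulting cross-ratio distortion bound with the $\tau$-scaled neighborhood hypothesis, and then convert cross-ratio control into a pointwise derivative estimate.

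First, since $f^k|_T$ is a diffeomorphism, for any nested pair $M_0 \Subset T_0 \subseteq T$ I use the factorization
$$D(f^k; M_0, T_0) \;=\; \prod_{j=0}^{k-1} D\bigl(f;\, f^j(M_0),\, f^j(T_0)\bigr).$$
The hypothesis $\sum_{j=0}^{k} |f^j(T)| \leq \ell$, combined with the injectivity of $f^k|_T$, controls the multiplicity of intersection of the family $\{f^j(T_0)\}_{j=0}^{k-1}$ by a constant $m = m(\ell)$. Applying the Cross-Ratio Inequality then yields a uniform bound $D(f^k; M_0, T_0) \leq C_1 = C_1(\ell, f)$ for every such pair.

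Next, the $\tau$-scaled neighborhood hypothesis provides a macroscopic lower bound on the $b$-cross-ratio on the image side: writing $L, R$ for the complementary components of $f^k(M)$ in $f^k(T)$, one has $|L|, |R| \geq \tau|f^k(M)|$, and a direct computation gives $b(f^k(M), f^k(T)) \geq (\tau/(1+\tau))^2$. Combined with the distortion bound above, this yields $b(M_0, T) \geq c_0(\tau, \ell, f) > 0$ for every $M_0 \subseteq M$. To pass to pointwise derivative bounds, I apply these estimates to the subinterval $[p, q] \subseteq M$ for arbitrary $p, q \in M$ and to small intervals around $p$ and around $q$. The uniform control on $b([p, q], T)$ and $b(f^k([p, q]), f^k(T))$ forces $|f^k(J)|/|J|$ to be comparable to $|f^k(M)|/|M|$ for every subinterval $J \subseteq M$, up to constants depending only on $\ell, \tau, f$. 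Taking $|J| \to 0$ around an arbitrary $p \in M$ identifies $|Df^k(p)|$ as comparable to $|f^k(M)|/|M|$, uniformly in $p$, from which the desired bound $|Df^k(p)|/|Df^k(q)| \leq K$ follows.

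The main obstacle I expect is the first step: bounding the multiplicity of intersection of $\{f^j(T)\}_{j=0}^{k-1}$ by a constant depending only on $\ell$. On the interval this comes easily from disjointness arguments, but on the circle iterates may wind around; one has to use the injectivity of $f^k|_T$, the sum-of-lengths hypothesis, and the irrationality of the rotation number together, as in \cite{S} and \cite{dMvS}.
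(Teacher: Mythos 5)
The paper does not prove this lemma; it only cites \cite[p.~295]{dMvS}. So your sketch must be judged on its own terms, and the decisive gap is exactly the step you flag as suspicious.

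The claim that $\sum_{j=0}^{k}|f^j(T)|\leq\ell$ together with injectivity of $f^k|_T$ forces the family $\{f^j(T)\}_{j=0}^{k-1}$ to have intersection multiplicity bounded by a constant $m=m(\ell)$ is false, and neither injectivity nor irrationality of the rotation number repairs it. Take $T$ comparable to an atom of $\mathcal{P}_n(c_0)$ and $k$ of order $q_{n+2}$: the first $q_{n+1}$ iterates of $T$ sit in distinct atoms of $\mathcal{P}_n(c_0)$ and are essentially disjoint, but after $q_{n+1}$ steps the interval has wrapped once around the circle and begins to overlap its earlier images, and it does so again roughly every $q_{n+1}$ further steps. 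The multiplicity near a typical point therefore grows like the partial quotient $a_{n+1}$, which is unbounded when the rotation number has unbounded type, while the real bounds keep $\sum_{j}|f^j(T)|$ of order $1$. So $m$ cannot depend only on $\ell$, and feeding an unbounded $m$ into the paper's Cross-Ratio Inequality yields the useless estimate $C^m$.

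What the cited proof in \cite{dMvS} actually does is replace the multiplicity-based inequality by a sum-of-lengths version, and this is the ingredient your sketch is missing. Split the iterates according to whether $f^j(T_0)$ lies in the union $\mathcal{U}$ of critical neighborhoods or in $\mathcal{V}$. On $\mathcal{U}$ the Schwarzian is negative, so $D(f;f^j(M_0),f^j(T_0))\leq 1$; these factors can only help an upper bound on the product. On $\mathcal{V}$ the map is a diffeomorphism with bounded bi-Schwarzian, so the identity \eqref{logdist} gives a single-step estimate $\log D\bigl(f;f^j(M_0),f^j(T_0)\bigr)\leq C_0\,|f^j(T_0)|$, and summing over those $j$ using $\sum_j|f^j(T)|\leq\ell$ gives a total bounded by $C_0\ell$, independently of $k$ and of any multiplicity count. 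With this in place of your Step~1, the remainder of your sketch --- the lower bound $b(f^k(M),f^k(T))\geq(\tau/(1+\tau))^2$ and the passage from macroscopic to microscopic Koebe via shrinking subintervals --- goes through along standard lines, though the macroscopic step requires anchoring the ratios $|f^k(J)|/|f^k(M)|$ to the $\tau$-space around $f^k(M)$ rather than relying solely on the lower bound for $b([p,q],T)$.
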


The proof of this lemma can be found in \cite[p.~295]{dMvS}. 

\subsection{Quasisymmetry}\label{sec:quasisym}
As we stated in the introduction, our goal in the present paper is to show that a topological conjugacy 
between two multicritical circle maps has a geometric property known as {\it quasisymmetry\/} (provided it 
maps the critical points of one map to the critical points of the other). An orientation-preserving 
homeomorphism $h: S^1\to S^1$ is said to be {\it quasisymmetric\/} if there exists a constant $K\geq 1$ such that, 
for all $x\in S^1$ and all $t>0$, we have 
\[
 \frac{1}{K}\;\leq\; \frac{|h(x+t)-h(x)|}{|h(x)-h(x-t)|}\;\leq\; K\ .
\]
Quasisymmetric homeomorphisms of the circle are precisely the boundary values of {\it quasiconformal\/} homeomorphisms of the unit disk 
in the complex plane (see \cite{Ah}). As such, they can be very bad from the differentiable viewpoint. In fact, they are often 
purely singular with respect to Lebesgue measure on the circle.

\section{The real a-priori bounds theorem}\label{sec:realbounds}

 In this section we establish real a-priori bounds for multicritical circle maps, {\it i.e.\/} maps satisfying the hypotheses of  
Definition \ref{def:multicritic}. Let $f$ be such a map, and $c_0,c_1,\ldots, c_{N-1}$ be its critical points; we assume throughout that $f$ 
has no periodic points. For each critical point 
  $c_{k}$ with $0\leq k \leq N-1$ and each non-negative integer $n$, let $I_{n}(c_{k})$ be the interval with endpoints 
$c_{k}$ and $f^{q_{n}}(c_{k})$ containing 
  $f^{q_{n+2}}(c_{k})$, as in \S \ref{circhom}. We will often write $I_{n}^{j}(c_{k})=f^{j}(I_{n}(c_{k}))$ for all $j$ and $n$. 
Recall from \S \ref{circhom} that the \textit{$n$-th dynamical partition} of $f$ associated with the critical point $c_{k}$, namely  
$\mathcal{P}_{n}(c_{k})$, is given by
  \begin{equation*}
  \mathcal{P}_{n}(c_{k}) = \left\{ I_{n}^i(c_{k})\,:\, 0\leq i\leq q_{n+1}-1\right\} \;\cup\; \left\{I_{n+1}^j(c_{k})\,:\, 
0\leq j\leq q_{n}-1\right\} \ .
  \end{equation*}
%The most basic combinatorial fact is that $\mathcal{P}_{n}(c_{k})$ is a partition (modulo boundary points) of the circle for every 
%$n$. We call it the \textit{$n$-th dynamical partition} of $f$ associated with the critical point $c_{k}$.
  
Let us focus our attention, for the time being, on one of the critical points only, say $c_0$. Everything we will say below about $c_0$ and 
its associated dynamical partitions $\mathcal{P}_{n}(c_{0})$, can be said about any other critical point of $f$ and its associated partitions. 
To simplify the notation a bit, we shall write below $\mathcal{P}_{n}$ instead of $\mathcal{P}_{n}(c_{0})$; accordingly, the atoms of $\mathcal{P}_{n}$ 
will be denoted by $I_{n}^i, I_{n+1}^j$ instead of $I_{n}^i(c_0), I_{n+1}^j(c_0)$, respectively. 
For $n$ large enough, we may assume that no two critical points of $f$ are in the same atom of $\mathcal{P}_{n}$.

 \begin{theorem}[Real A-priori Bounds] \label{realbounds} 
 Let $f$ be a multicritical circle map. There exists a constant $C>1$ depending only of $f$ such that the following holds. 
For all $n\geq 0$ and for each pair of adjacent atoms $I, J\in \mathcal{P}_{n}$ we have
 \begin{equation}\label{compatom}
  C^{-1} {|J|} \leq |I| \leq C|J|.
 \end{equation}
 \end{theorem}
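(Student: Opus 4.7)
\smallskip
\noindent\textbf{Proof plan.} The plan is to adapt the real-variable strategy of de Faria--de Melo \cite{dFdM}, originally developed for the unicritical case, to the multicritical setting, with the Cross-Ratio Inequality playing the central role. The argument splits naturally into two pieces.

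\emph{Piece 1 (core estimate at the critical point).} The first goal is to prove that the two atoms adjacent to $c_0$ are comparable, that is, $|I_n(c_0)|\asymp |I_{n+1}(c_0)|$ with constants independent of $n$. I would fix a ``standard neighborhood'' $T\supset I_n(c_0)\cup I_{n+1}(c_0)$ of $c_0$ (for instance, a definite enlargement of $I_{n-1}(c_0)$), and compare the Poincar\'e length of $I_{n+1}(c_0)$ inside $T$ with that of $f^{q_n}(I_{n+1}(c_0))=I_{n+1}^{q_n}(c_0)$ inside $f^{q_n}(T)$, exploiting the identity $\log b(M,T)=-\int_M\rho_T$ recorded in \eqref{bratio}. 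The total cross-ratio distortion along the orbit segment is bounded by the Cross-Ratio Inequality applied to the family $\{f^j(T):0\leq j<q_n\}$, which has bounded multiplicity of intersection thanks to the combinatorics of closest returns. The Power Law controls the finitely many critical passes through any of the $c_k$'s. Translating the resulting comparison of Poincar\'e lengths back into Euclidean lengths delivers the desired two-sided bound on $|I_{n+1}(c_0)|/|I_n(c_0)|$.

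\emph{Piece 2 (propagation along the orbit).} Every adjacent pair $(I,J)$ in $\mathcal{P}_n$ is the image under some iterate $f^i$ of a canonical adjacent pair at one of the critical points (typically $(I_n(c_k),I_{n+1}(c_k))$ meeting at $c_k=f^{-i}(\textrm{common endpoint})$). Applied to the orbit of a suitable enlargement of such canonical pairs, the Cross-Ratio Inequality (again using bounded multiplicity) gives a uniform upper bound on the \emph{product} of individual cross-ratio distortions, and hence, via a pigeonhole-type argument combined with Piece~1 now applied at \emph{all} critical points, a uniform comparability $|I|\asymp|J|$ for every single adjacent pair in $\mathcal{P}_n$.

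\emph{Main obstacle.} The delicate step is Piece~1, where the power-law singularity at $c_0$ has to be handled hand in hand with the cross-ratio distortion estimate. The new difficulty compared to \cite{dFdM} is that other critical points of $f$ may lie in the orbit intervals of $T$; however, this is precisely what the Cross-Ratio Inequality with its bounded-multiplicity hypothesis is designed to absorb, each critical encounter contributing only a uniformly bounded factor via the Power Law and the bounded variation of $\log Df$ away from the critical neighborhoods. Once Piece~1 is in place, Piece~2 is largely a bookkeeping exercise using the same tools.
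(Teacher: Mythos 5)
Your overall plan is aligned with the paper's structure, but there is a genuine gap in Piece~1: you do not explain how to break the circularity that arises when one tries to get a \emph{two-sided} bound from the Cross-Ratio Inequality. The CRI only tells you that cross-ratios are weakly contracted, or distorted by a bounded factor; it does not by itself force $b(I_{n+1}(c_0),T)$ to be bounded below, because that would require already knowing that $I_{n+1}(c_0)$ occupies a definite proportion of $T$, or that $f^{q_n}(T)$ and $I_{n+1}^{q_n}(c_0)$ are ``nicely positioned'' --- and that is precisely the content of the real bounds. Choosing $T$ to be ``a definite enlargement of $I_{n-1}(c_0)$'' does not help, since the ratio $|I_{n-1}(c_0)|/|I_n(c_0)|$ is not known a priori either, and the translation between Poincar\'e length in $T$ and Euclidean length depends on exactly these ratios. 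The paper resolves this by the ``seven-point argument'' (Lemma~\ref{lemma1}): one picks a special base point $z$ that \emph{minimizes} $|f^{q_n}(z)-z|$, so that the central gap $|z_4-z_5|$ is automatically no larger than any of the neighboring gaps. This one-sided minimality is exactly the initial bound needed to make the cross-ratio inequalities cascade into two-sided estimates. That result is then transported from $z$ to all points (Lemmas~\ref{lemma2}--\ref{lemma3}, giving the ``dynamically symmetric intervals are comparable'' principle), and only afterward is the power law at $c_0$ invoked (Proposition~\ref{lemma4}, step (iv)) to compare $I_n(c_0)$ with $I_{n+1}(c_0)$. Without this bootstrap, your Piece~1 does not close.

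A secondary issue: in Piece~2 you write that an adjacent pair in $\mathcal{P}_n$ is the image of a canonical pair ``at one of the critical points $c_k$.'' Since $\mathcal{P}_n=\mathcal{P}_n(c_0)$, the common endpoint of any adjacent pair is $f^m(c_0)$ for some $m$, not another critical point, and the propagation argument in the paper pushes each triple $L\cup M\cup R$ \emph{forward} until the image lands on the canonical configuration near $c_0$ controlled by Proposition~\ref{lemma4}. This is a small mix-up, but worth fixing, because it is the correct alignment with the six-interval comparability that makes the ``bookkeeping'' work. Your instinct that the power law absorbs the finitely many critical passes and that the CRI with bounded intersection multiplicity carries the propagation is correct and matches the paper; the missing ingredient is the minimality/bootstrap step and the intermediate lemma on dynamically symmetric intervals.
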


The inequalities in \eqref{compatom} tell us that the atoms $I$ and $J$ are {\it comparable\/}. Thus the above theorem is saying that 
{\it any two adjacent atoms of a dynamical partition of $f$ are comparable\/}. 

The proof of Theorem \ref{realbounds} is a bit long and depends on a few auxiliary lemmas. Rather then following the original 
unpublished notes of Herman \cite{H} 
(which in turn were based on previous work by Swiatek \cite{S}), we will imitate the approach used in \cite[\S 3]{dFdM}, 
There is one crucial difference, however. In that paper, the map $f$ had only one critical point (our $c_0$ here), and therefore all 
transition maps $f^{i_2-i_1}: I_n^{i_1}\to I_n^{i_2}$ with 
$1\leq i_1<i_2\leq q_{n+1}$ were {\it diffeomorphisms\/}. Hence the authors were able to use the {\it Koebe distortion principle\/} 
(see Lemma \ref{koebe} in \S \ref{sec:distort}). We are not allowed to do that here, due to the presence of other 
critical points (besides $c_ 0$). 
Instead, we deal directly with the control of cross-ratio distortion, via the Cross-Ratio Inequality. 

\subsection{Symmetric intervals are comparable}
First we establish a comparability result such as inequality \eqref{compatom} for general dynamically symmetric intervals, 
that is, any pair of intervals with an endpoint in common $x\in S^1$, the other 
 endpoints being $f^{q_{n}}(x)$ and $f^{-q_{n}}(x)$, for some $n>0$. For this purpose
 we need the next two lemmas. The first lemma is proved by what deserves to be called a {\it seven point argument\/} (even though only 
five points appear in the statement, seven points are used in the proof). 
 
\begin{lemma}\label{lemma1}
There exists a constant $C_1>1$ depending only on $f$ satisfying the following. 
For each $n\geq 0$ there exist $z_{1}, z_{2}, z_{3}, z_{4}$ and $z_{5}$ points in $S^{1}$ with $z_{j+1}=f^{q_{n}}(z_{j})$ such that
 \begin{equation}\label{manyzees}
  C_1^{-1} \;\leq\;  \dfrac{|z_{i-1}-z_{i}|}{|z_{i+1}-z_{i}|}  \;\leq\; C_1 , \hspace{0.4cm} \text{for $i = 2,3,4 $}.
 \end{equation}
\end{lemma}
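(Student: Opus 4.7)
\textbf{Plan for Lemma~\ref{lemma1}.} The plan is to find the five required points inside a longer window of seven consecutive $f^{q_n}$-iterates of a well-chosen base point. Setting $w_0\in S^1$ and $w_{k+1}=f^{q_n}(w_k)$ for $k=0,\ldots,5$, define the gaps $J_k=[w_k,w_{k+1}]$ with $a_k=|J_k|$. The goal is to locate an index $j\in\{0,1,2\}$ for which the four consecutive gaps $a_j,a_{j+1},a_{j+2},a_{j+3}$ are mutually comparable up to a constant depending only on $f$; the five points $z_i=w_{j+i-1}$, $i=1,\ldots,5$, then satisfy (\ref{manyzees}).

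The driving mechanism is the Cross-Ratio Inequality applied to the iterated map $f^{q_n}$. For each $k\in\{0,1,2,3\}$, define the cross-ratio pair $M_k=[w_{k+1},w_{k+2}]\Subset T_k=[w_k,w_{k+3}]$, so that $f^{q_n}$ sends $(M_k,T_k)$ to $(M_{k+1},T_{k+1})$. Decomposing $f^{q_n}$ as $q_n$ copies of $f$, the Cross-Ratio Inequality yields
\[
 \frac{b(M_{k+1},T_{k+1})}{b(M_k,T_k)}\;=\;\prod_{i=0}^{q_n-1}D(f;f^i(M_k),f^i(T_k))\;\leq\;C^{m_k},
\]
where $m_k$ bounds the multiplicity of intersection of $\{f^i(T_k):0\leq i<q_n\}$. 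Using the closest-return characterization of the sequence $(q_n)$ and a suitable choice of $w_0$ (for instance so that $T_0$ is contained in a bounded number of atoms of the coarser partition $\mathcal{P}_{n-2}(c_0)$), one obtains a universal bound $m_k\leq M$, independent of $n$ and $k$. This gives uniform forward control of the $b$-cross-ratios along the window, and a dual argument (exploiting $b\in(0,1)$ together with the constraint $\sum_k a_k\leq 1$ coming from all seven points lying on the unit circle) provides the matching backward control, so that $b(M_0,T_0),\ldots,b(M_3,T_3)$ lie in a common compact sub-interval of $(0,1)$.

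To convert this into comparability of gap lengths, use the explicit formula
\[
 b(M_k,T_k)\;=\;\frac{a_k\,a_{k+2}}{(a_k+a_{k+1})(a_{k+1}+a_{k+2})}.
\]
One reads off that $b(M_k,T_k)$ is pushed toward $1$ precisely when $a_{k+1}$ is tiny relative to $a_k$ and $a_{k+2}$, and toward $0$ when $a_{k+1}$ dominates. A short case analysis on the six-term profile $(a_0,\ldots,a_5)$ shows that if every four-term sub-window $(a_j,\ldots,a_{j+3})$ contained a severe oscillation, then some pair among $b(M_0,T_0),\ldots,b(M_3,T_3)$ would be forced to differ by an unbounded factor, contradicting the uniform comparability established above. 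This yields the index $j$ and a universal constant $C_1$.

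The main technical obstacle is the bounded-multiplicity estimate for $\{f^i(T_k):0\leq i<q_n\}$, which must be extracted from the closest-return characterization of $(q_n)$ alone---the real bounds themselves being what we are in the process of establishing. The essential combinatorial input is that the forward $f$-orbit of a sufficiently small interval, tracked for $q_n$ steps, visits each point of $S^1$ only a universal number of times; arranging for $T_0$ to sit inside a bounded number of atoms of $\mathcal{P}_{n-2}(c_0)$ is what secures the multiplicity bound, and thereby the universality of the constant $C_1$ in (\ref{manyzees}), independently of $n$.
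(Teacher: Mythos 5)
The setup of your argument is essentially the same as the paper's — seven consecutive $f^{q_n}$-iterates, the nested pairs $M_k=[w_{k+1},w_{k+2}]\Subset T_k=[w_k,w_{k+3}]$, and the Cross-Ratio Inequality applied to the iterated map — but there is a genuine gap where you claim two-sided control of the cross-ratios $b(M_k,T_k)$. The Cross-Ratio Inequality gives only the one-sided bound $b(M_{k+1},T_{k+1})\leq B\,b(M_k,T_k)$; there is no matching lower bound, since the $b$-cross-ratio can be contracted arbitrarily much under iteration (this is precisely what happens near a critical point). Your proposed ``dual argument'' exploiting $b\in(0,1)$ and $\sum_k a_k\leq 1$ does not produce such a lower bound: the cross-ratios $b(M_k,T_k)$ are scale-invariant in the gap lengths $(a_0,\ldots,a_5)$, so the normalization $\sum a_k\leq 1$ imposes no constraint whatsoever on them. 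Consequently the ``uniform comparability'' of $b(M_0,T_0),\ldots,b(M_3,T_3)$ that your subsequent case analysis is supposed to contradict has not actually been established, and the proof as written does not close.

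The missing ingredient is a judicious choice of the base point: one should take $z\in S^1$ that \emph{minimizes} $|f^{q_n}(z)-z|$ and center the seven points so that $z_4=z$. This minimality gives the unconditional inequality $|z_4-z_5|\leq|z_i-z_{i+1}|$ for all $0\leq i\leq 5$, which is the extra one-sided input needed to turn the one-sided Cross-Ratio Inequality into two-sided comparability; the bounds for $i=4,3,2$ are then obtained by a bootstrapping argument, each step using the previously derived bound together with the minimality of $|z_4-z_5|$. Two minor further remarks: (1) the multiplicity bound for $\{f^i(T_k):0\leq i\leq q_n\}$ is a purely combinatorial consequence of the closest-return structure — each of the three constituent intervals $[w_j,w_{j+1}]$ of $T_k$ has disjoint forward orbit for $q_n$ steps, so the multiplicity is exactly $3$ — and your appeal to the partition $\mathcal{P}_{n-2}(c_0)$ to secure it is both unnecessary and potentially circular, as it would rely on length estimates that are downstream of the real bounds being proved; (2) the ``short case analysis'' you sketch at the end is not substantiated, and in the correct argument it is replaced by the explicit chain of inequalities driven by the minimality of $|z_4-z_5|$.
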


\begin{proof}
Let $z \in S^{1}$ be a point such that, for all $x \in S^{1}$,
 \begin{equation*}
  |f^{q_{n}}(z)-z| \leq |f^{q_{n}}(x)-x|.
 \end{equation*}
Then consider the seven points
\[
 z_{0}=f^{-4q_{n}}(z)\,,\, z_{1}=f^{-3q_{n}}(z)\,,\, z_{2}=f^{-2q_{n}}(z)\,,\, z_{3}=f^{-q_{n}}(z)\,,\, 
\]
\[ 
 z_{4}=z\,,\, z_{5}=f^{q_{n}}(z)\,,\,z_{6}=f^{2q_{n}}(z)\ . 
\]
Note that, by our choice of $z$,   
\begin{equation}\label{z4z5}
 |z_{4}-z_{5}| \leq  |z_{i}-z_{i+1}| \ ,\ \ \textrm{for all}\  0 \leq i \leq 5 \ .
\end{equation}
These seven points are cyclically ordered as given (either in clockwise or counterclockwise order in the circle), provided $n$ is sufficiently large. 
Let $J\subset S^1$ be the closed interval with endpoints $z_0$ and $z_6$ that contains $z=z_4$. 
For each $0\leq i\leq 3$, let $T_i=[z_i,z_{i+3}]\subset J$ and $M_i=[z_{i+1},z_{i+2}]\subset T_i$. Then the homeomorphism $f^{q_n}$ maps $T_i$ onto 
$T_{i+1}$ and $M_i$ onto $M_{i+1}$, for $0\leq i\leq 2$. Moreover, the collection of intervals $\{T_i\,,\,f(T_i)\,,\,\ \ldots\,,\,f^{q_n}(T_i)\}$ has 
intersection multiplicity equal to $3$. 
\begin{enumerate}
\item[(i)] Let us first prove \eqref{manyzees} for $i=4$. Applying the Cross-Ratio Inequality to $f^{q_n}$ and the pair $(M_2,T_2)$, we have 
\[
 D(f^{q_n};M_2,T_2)\;=\; \frac{b(M_3,T_3)}{b(M_2,T_2)}\;=\; \dfrac{|z_{3}-z_{4}||z_{5}-z_{6}||z_{2}-z_{4}|}{|z_{4}-z_{6}||z_{2}-z_{3}||z_{4}-z_{5}|} 
\leq B \ ,
\]
where $B>1$ is a constant that depends only on $f$. 
But then, using \eqref{z4z5}, we see that 
\begin{equation*}
 \dfrac{|z_{3}-z_{4}|}{|z_{4}-z_{5}|} \leq B \dfrac{|z_{4}-z_{6}|}{|z_{5}-z_{6}|} 
 = B \left( \dfrac{|z_{4}-z_{5}|}{|z_{5}-z_{6}|} + 1 \right) \leq 2B.
\end{equation*}
Therefore, defining $B_1=2B$ and again using \eqref{z4z5}, we get
\begin{equation}\label{ineq2.1}
 {B_1^{-1}} \leq \dfrac{|z_{3}-z_{4}|}{|z_{4}-z_{5}|} \leq B_1\ .
\end{equation}

\item[(ii)] Let us now prove \eqref{manyzees} for $i=3$.
Applying the Cross-Ratio Inequality to $f^{q_n}$ and the pair $(M_1,T_1)$, we have 

\begin{equation*}
 D(f^{q_n};M_1,T_1)\;=\; \frac{b(M_2,T_2)}{b(M_1,T_1)}\;=\; \dfrac{|z_{2}-z_{3}||z_{4}-z_{5}||z_{1}-z_{3}|}{|z_{3}-z_{5}||z_{1}-z_{2}||z_{3}-z_{4}|} 
\leq B\ ,
\end{equation*}
or equivalently, using \eqref{z4z5} and the upper bound in \eqref{ineq2.1},
\begin{equation*}
  \dfrac{|z_{2}-z_{3}|}{|z_{3}-z_{4}|} \leq B \dfrac{|z_{3}-z_{5}|}{|z_{4}-z_{5}|} 
  \leq B \left( \dfrac{|z_{3}-z_{4}|}{|z_{4}-z_{5}|} + 1 \right) \leq B(B_{1}+1).
\end{equation*}
On the other hand, using \eqref{z4z5} once again,
\begin{equation*}
 \dfrac{|z_{3}-z_{4}|}{|z_{2}-z_{3}|} \leq \dfrac{|z_{3}-z_{4}|}{|z_{4}-z_{5}|} \leq B_1.
\end{equation*}
Taking $B_{2} = B(B_{1}+1)$ and putting the last two inequalities together, we get
\begin{equation}\label{ineq2.2}
{B_2^{-1}} \leq \dfrac{|z_{2}-z_{3}|}{|z_{3}-z_{4}|} \leq B_2\ .
\end{equation}
\item[(iii)] Finally, let us prove \eqref{manyzees} for $i=2$. As before, applying the Cross-Ratio inequality to $f^{q_n}$ and the pair $(M_0,T_0)$, we have 

\begin{equation*}
 D(f^{q_n};M_0,T_0)\;=\; \frac{b(M_1,T_1)}{b(M_0,T_0)}\;=\; \dfrac{|z_{1}-z_{2}||z_{3}-z_{4}||z_{0}-z_{2}|}{|z_{2}-z_{3}||z_{0}-z_{1}||z_{2}-z_{3}|} 
\leq B\ ,
\end{equation*}
From this, using \eqref{z4z5} and \eqref{ineq2.2}, we get on the one hand
\begin{equation}\label{ineq2.3.1}
 \dfrac{|z_{1}-z_{2}|}{|z_{2}-z_{3}|} \leq B \dfrac{|z_{2}-z_{4}|}{|z_{3}-z_{4}|} 
 \leq B \left( \dfrac{|z_{2}-z_{3}|}{|z_{3}-z_{4}|} +1 \right) \leq B(B_{2}+1).
\end{equation}
On the other hand, the inequalities (\ref{ineq2.1}) and (\ref{ineq2.2}) tell us that
\begin{equation}\label{ineq2.3.2}
 \dfrac{|z_{2}-z_{3}|}{|z_{1}-z_{2}|} \leq B_{2} \dfrac{|z_{3}-z_{4}|}{|z_{1}-z_{2}|} \leq B_{2}B_{1}\dfrac{|z_{4}-z_{5}|}{|z_{1}-z_{2}|}\leq B_{2}B_{1}\ .
\end{equation}
Defining $B_{3}= \max \{ B(B_{2}+1), B_{2}B_{1} \}=B_1B_2$, and using inequalities (\ref{ineq2.3.1}) and (\ref{ineq2.3.2}), we obtain
\begin{equation}\label{ineq2.3}
 B_{3}^{-1} \leq \dfrac{|z_{1}-z_{2}|}{|z_{2}-z_{3}|} \leq B_{3} \ .
\end{equation} 
\end{enumerate}
Summarizing, we have proved \eqref{manyzees} with $C_1=\max\{B_1,B_2,B_3\}=B_3>1$, a constant that indeed depends only on $f$.  
\end{proof}

\begin{lemma}\label{lemma2}
 There exists a constant $C_{2}>1$ depending only on $f$ satisfying the following. 
 Let $z_{1},z_{2},z_{3},z_{4}$ and $z_{5}$ be the points given by Lemma \ref{lemma1}. If $w_{0},w_{1},w_{2},w_{3}$ and $w_{4}$ are points on 
 the circle such that $w_{j+1}=f^{q_{n}}(w_{j})$ and such that $w_{1}$ lies in the interval with endpoints $z_{1}$ and $z_{2}$ that does not contain
 $z_{3}$, then  
 \begin{equation}\label{manyws}
   \dfrac{|w_{1}-w_{2}|}{|w_{0}-w_{1}|}\leq {C_{2}}\hspace{1.0cm} \text{ and} \hspace{1.0cm} 
   C_2^{-1}\leq \dfrac{|w_{i-1}-w_{i}|}{|w_{i}-w_{i+1}|}\leq {C_{2}} \ \ \text{for } i=2,3\ .
 \end{equation}
\end{lemma}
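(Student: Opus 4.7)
The plan is to imitate the seven-point argument used for Lemma \ref{lemma1}, replacing the ``free'' orbit by one that is now constrained to shadow the already-controlled $z$-frame. The Cross-Ratio Inequality is again the workhorse, and Lemma \ref{lemma1} is used to absorb every length-ratio involving only $z$-points.

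First I would unpack the hypothesis. Because $f^{q_n}$ is an orientation-preserving homeomorphism carrying each $z_j$ to $z_{j+1}$, the assumption $w_1 \in [z_1,z_2]$ (on the side opposite $z_3$) together with $w_{j+1}=f^{q_n}(w_j)$ forces $w_j \in [z_j,z_{j+1}]$ for every $j=0,1,2,3,4$, where we use the extended point $z_0=f^{-q_n}(z_1)$ that already appeared in the proof of Lemma \ref{lemma1}. In particular every length of the form $|w_j-z_k|$ with $k\in\{j,j+1\}$ is at most $|z_j-z_{j+1}|$, hence comparable to any other consecutive $|z_a-z_{a+1}|$ by Lemma \ref{lemma1}, and every length $|w_j-z_k|$ with $k\notin\{j,j+1\}$ is sandwiched between two such comparable quantities.

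Next, for the two-sided estimate at $i=2,3$ I would apply the Cross-Ratio Inequality to $f^{q_n}$ on the pair $M = [w_{i-1},w_i]\subset T=[z_{i-2},z_{i+1}]$, whose image is $M'=[w_i,w_{i+1}]\subset T'=[z_{i-1},z_{i+2}]$. Exactly as in the proof of Lemma \ref{lemma1}, the forward iterates $\{f^k(T)\}_{0\leq k<q_n}$ have intersection multiplicity at most $3$, so the multiplicative Cross-Ratio Inequality gives $b(M',T')/b(M,T)\leq B$ with $B$ depending only on $f$. Substituting the explicit formula for $b$ and using the shadowing observation above, every factor except $|w_{i-1}-w_i|$ and $|w_i-w_{i+1}|$ collapses to a constant depending only on $f$, producing the upper half of the desired inequality. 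The lower half follows by running the same argument with the roles of $M$ and $M'$ reversed (equivalently, applying the same inequality to the dynamically reversed pair via $f^{-q_n}$, whose iterates $\{f^{-k}(T')\}$ likewise have bounded intersection multiplicity). This yields $C_2^{-1}\leq |w_{i-1}-w_i|/|w_i-w_{i+1}|\leq C_2$ for $i=2,3$. For $i=1$ only the forward direction of this argument is available, because the reverse step would require a pre-iterate $z_{-1}=f^{-q_n}(z_0)$ outside the $z$-frame supplied by Lemma \ref{lemma1}; this is exactly why the lemma only asserts the one-sided bound $|w_1-w_2|/|w_0-w_1|\leq C_2$ at the boundary index.

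The main obstacle is not conceptual but bookkeeping. After expanding the two $b$-cross-ratios, one is left with eight length factors, and one must identify which of them are ``constant'' (controlled by Lemma \ref{lemma1} and the enclosures $w_j\in[z_j,z_{j+1}]$) and which carry the genuine information $|w_{i-1}-w_i|$, $|w_i-w_{i+1}|$. A secondary subtlety is ensuring that the two auxiliary intervals $T$ and $T'$ indeed lie inside the cyclic order spanned by $z_0,\ldots,z_5$ for every index $i$ under consideration, so that no additional $z$-point is needed; this forces the slight asymmetry at $i=1$ mentioned above. Once this separation is explicit, the conclusion is a direct rearrangement.
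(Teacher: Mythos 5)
Your key observation that $w_j\in[z_j,z_{j+1}]$ for each $j$ is correct, and it is indeed the starting point of the paper's proof. However, the strategy you propose for the two-sided estimate at $i=2,3$ has two genuine flaws.

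The first is a computational gap. With your choice $M=[w_{i-1},w_i]\subset T=[z_{i-2},z_{i+1}]$, the two complementary components are $L=[z_{i-2},w_{i-1}]$ and $R=[w_i,z_{i+1}]$, so
\[
b(M,T)=\frac{|z_{i-2}-w_{i-1}|\,|w_i-z_{i+1}|}{|z_{i-2}-w_i|\,|w_{i-1}-z_{i+1}|}\ ,
\]
and similarly for $b(M',T')$. The quantities $|w_{i-1}-w_i|$ and $|w_i-w_{i+1}|$ that you need to estimate never appear among the eight factors, because the $b$-cross-ratio of a pair $(M,T)$ does not involve $|M|$ at all. Moreover $|w_i-z_{i+1}|$ can be arbitrarily small (nothing prevents $w_i$ from approaching $z_{i+1}$), so the factors you call ``constant'' are not in fact uniformly bounded away from zero. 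The second flaw is that the Cross-Ratio Inequality is one-sided: it asserts $D(f^{q_n};M,T)\leq B$ and gives no lower bound. Reversing via $f^{-q_n}$ does not supply one, because the inverse of a multicritical circle map is not a multicritical circle map in the sense of Definition~\ref{def:multicritic} (its critical exponents at the critical values of $f$ are $1/s_i<1$), so the Cross-Ratio Inequality does not apply to it.

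In fact the two-sided estimate at $i=2,3$ requires no cross-ratio argument at all. The minimality used to select $z=z_4$ in Lemma~\ref{lemma1} gives $|w_j-w_{j+1}|=|f^{q_n}(w_j)-w_j|\geq|z_4-z_5|$ for every $j$, and your enclosure $w_{i-1}\in[z_{i-1},z_i]$, $w_i\in[z_i,z_{i+1}]$ gives $|w_{i-1}-w_i|\leq|z_{i-1}-z_i|+|z_i-z_{i+1}|$. Chaining the ratios of Lemma~\ref{lemma1} shows $|z_k-z_{k+1}|\leq C_1^3|z_4-z_5|$ for $1\leq k\leq 4$, whence $|w_{i-1}-w_i|/|w_i-w_{i+1}|\leq 2C_1^3$; the lower bound follows symmetrically by swapping the roles of the two differences. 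The Cross-Ratio Inequality is only needed for the boundary index $i=1$, and there the paper takes $T=[w_0,w_3]$ and $M=[w_1,w_2]$ with $w$-endpoints rather than $z$-endpoints, so that $b(M,T)$ and its image do contain $|w_0-w_1|$ and $|w_1-w_2|$; the remaining factors are then sandwiched by $z$-gaps via the cyclic interleaving $w_0,z_1,w_1,z_2,\ldots$ together with Lemma~\ref{lemma1} and the minimality of $|z_4-z_5|$.
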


\begin{proof}
To prove the first inequality, we consider the interval $T$ with endpoints $w_{0}$ and $w_{3}$ containing $z_{1}, w_{1}, z_{2}, w_2, z_3$, and 
the subinterval $M=[w_1,w_2]\subset T$. Note that $\{T,f(T),\ldots,f^{q_n}(T)\}$ has intersection multiplicity equal to $3$. 
Hence, applying the Cross-Ratio Inequality to $f^{q_{n}}$ and the pair $(M,T)$, 
we get $b(f^{q_n}(M),f^{q_n}(T))\leq B b(M,T)$, or equivalently 
 \begin{equation}\label{ineq2.4}
  \dfrac{|w_{1}-w_{2}| |w_{3}-w_{4}|}{|w_{1}-w_{3}| |w_{2}-w_{4}|} \leq B \dfrac{|w_{0}-w_{1}| |w_{2}-w_{3}|}{|w_{0}-w_{2}| |w_{1}-w_{3}|} \ . 
 \end{equation}
Since the points $w_0,z_1,w_1,\ldots,z_4,w_4,z_5$ are cyclically ordered as given, we have the inequalities 
%$|w_{2}-w_{4}|\leq |w_{0}-w_{2}|$, 
$|z_{1}-z_{2}|\leq |w_{0}-w_{2}|$, $|w_{2}-w_{3}| \leq |z_{2}-z_{4}|$, and $|w_{2}-w_{4}|\leq |z_{2}-z_{5}|$. Moreover, we have $|z_{4}-z_{5}|\leq |w_{3}-w_{4}|$, 
by our choice of $z=z_4$ in Lemma \ref{lemma1}. These facts, when put back into \eqref{ineq2.4}, yield
 \begin{equation}\label{firstw}
  \dfrac{|w_{1}-w_{2}|}{|w_{0}-w_{1}|} \leq B \dfrac{|z_{2}-z_{4}| |z_{2}-z_{5}|}{|z_{1}-z_{2}| |z_{4}-z_{5}|}\leq B(C_1+C_1^2)(1+C_1+C_1^2) \ ,
 \end{equation}
where we have used the inequalities of Lemma \ref{lemma1}. 

To prove the upper bound in the last two inequalities in \eqref{manyws}, we simply 
note that $|w_i-w_{i+1}|\geq |z_4-z_5|$ and that $|w_{i-1}-w_i|\leq |z_{i-1}-z_{i+1}|$. Using the 
inequalities \eqref{manyzees}, we deduce that
\begin{equation}\label{secondw}
 \dfrac{|w_{i-1}-w_i|}{|w_{i}-w_{i+1}|}\leq \dfrac{|z_{i-1}-z_i|}{|z_4-z_5|}+ \dfrac{|z_{i}-z_{i+1}|}{|z_4-z_5|}\leq 2C_1^3
\end{equation}
The lower bound for the same inequalities in  \eqref{manyws} is proven in exactly the same way (the value obtained is $(2C_1^3)^{-1}$). 
Thus, \eqref{manyws} is established, provided we take $C_{2}= \max\{ 2C_1^{3}\,,\, B(C_1+C_1^2)(1+C_1+C_1^2)\}$.
\end{proof}

We are now in a position to show that dynamically symmetric intervals are always comparable. This fact will be crucial in the proof of 
Proposition \ref{lemma4}, which in turn will be the major step in the proof of Theorem \ref{realbounds}. In the lemma below, we make use of the 
following simple remark. Given $\xi\in S^1$, let $J_n(\xi)\subset S^1$ be the interval with endpoints $f^{-q_n}(\xi)$ and $f^{q_n}(\xi)$ that contains 
$\xi$. Then $\bigcup_{i=0}^{q_{n+1}} f^{-i}(J_n(\xi)) = S^1$. 

\begin{lemma}\label{lemma3}
 There exists a constant  $C_3> 1$ depending only on $f$ such that, for all $n\geq 0$ and all $x \in S^{1}$, we have
 \begin{equation}\label{symreturns}
  C_3^{-1}{|x- f^{-q_{n}}(x)|} \leq |f^{q_{n}}(x)-x| \leq C_3 |x- f^{-q_{n}}(x)|.
 \end{equation}
\end{lemma}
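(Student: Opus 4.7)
The plan is to reduce the comparability at an arbitrary $x \in S^1$ to the comparability already available for the distinguished points $z_j$ produced by Lemma \ref{lemma1}, transporting it back via cross-ratio distortion. Using the covering remark preceding the statement with $\xi = z_3$, I first find $i \in \{0, 1, \ldots, q_{n+1}\}$ such that $y := f^i(x) \in J_n(z_3) = [z_2, z_4]$. Either $y \in [z_2, z_3]$ or $y \in [z_3, z_4]$; in the first case I identify $y$ with $w_2$ in the indexing of Lemma \ref{lemma2} (so that $w_1 = f^{-q_n}(y) \in [z_1, z_2]$), and in the second case with $w_3$ (so that $w_1 = f^{-2q_n}(y) \in [z_1, z_2]$). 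In either case Lemma \ref{lemma2} delivers the three-gap comparability $|w_1 - w_2| \asymp |w_2 - w_3| \asymp |w_3 - w_4|$ together with the one-sided bound $|w_0 - w_1| \geq |w_1 - w_2|/C_2$.

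For the inequality $|x - f^{q_n}(x)| \geq C_3^{-1}|f^{-q_n}(x) - x|$ I set $M = [f^{-q_n}(x), x]$ and $T = [f^{-2q_n}(x), f^{q_n}(x)]$, with gap components $L = [f^{-2q_n}(x), f^{-q_n}(x)]$ and $R = [x, f^{q_n}(x)]$. Under $f^i$, both $M$ and $T$ map to intervals with endpoints among the $w_j$, and a brief case check using the relations above -- keeping in mind that $|w_0 - w_1|$ may be much larger than $|w_1 - w_2|$, in which case $|w_0 - w_1|/|w_0 - w_2|$ tends to $1$ -- shows that $b(f^i(M), f^i(T)) \geq c_0(f) > 0$. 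The Cross-Ratio Inequality applied to the iterate $f^i$ then gives $b(f^i(M), f^i(T)) \leq C^m\, b(M, T)$, where $m$ bounds the intersection multiplicity of the family $\{f^k(T)\}_{k=0}^{i-1}$; since $T$ is the union of three adjacent long atoms of the partition $\mathcal{P}_n(x)$ and the long atoms $\{f^k(I_n(x))\}_{k=0}^{q_{n+1}-1}$ are pairwise disjoint, one can take $m$ to be a small absolute constant. Thus $b(M, T) \geq c := c_0/C^m$, and factoring $b(M, T) = \tfrac{|L|}{|L|+|M|} \cdot \tfrac{|R|}{|M|+|R|}$ as a product of two quantities in $(0, 1)$ yields $|R|/(|M|+|R|) \geq c$, i.e.\ $|R| \geq \tfrac{c}{1-c}|M|$, which is the sought inequality.

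The reverse inequality follows by applying the same procedure to $M' = [x, f^{q_n}(x)]$ and $T' = [f^{-q_n}(x), f^{2q_n}(x)]$. The main obstacle appears precisely here: when $y \in [z_3, z_4]$, the image $f^i(T') = [w_2, w_5]$ involves the gap $|w_4 - w_5|$, which Lemma \ref{lemma2} does not control. To handle this I would invoke the minimizer property underlying Lemma \ref{lemma1}, namely that $z_4$ realizes $\min_{w}|f^{q_n}(w) - w|$, so that $|w_4 - w_5| \geq |z_5 - z_4|$; combined with the estimate $|w_3 - w_4| \lesssim |z_3 - z_4| \asymp |z_4 - z_5|$ drawn from Lemmas \ref{lemma1} and \ref{lemma2}, this gives $|w_4 - w_5| \gtrsim |w_3 - w_4|$, which is exactly what is needed to bound $b(f^i(M'), f^i(T'))$ from below. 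The remainder of the argument (multiplicity bound, Cross-Ratio Inequality, passage from cross-ratio to length ratio) duplicates the first direction, and combining both bounds produces the constant $C_3$.
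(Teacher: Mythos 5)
Your proposal is correct and establishes the lemma, but it departs from the paper's route in several respects worth noting.

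The paper anchors the covering remark at $\xi=z_2$, works with $M=[x,f^{q_n}(x)]$ and $T=[f^{-q_n}(x),f^{2q_n}(x)]$, and proves only the upper bound $|f^{q_n}(x)-x|\leq C_3|x-f^{-q_n}(x)|$; the lower bound is declared to follow ``by replacing $x$ by $f^{-q_n}(x)$'' --- in effect by running the same argument for the inverse iterate, with constants still depending only on $f$. You instead anchor at $\xi=z_3$ and attack both inequalities head-on. This is a deliberate and sensible choice: with $\xi=z_3$ the lower bound works uniformly in both sub-cases (the images $f^i(M)$ and $f^i(T)$ stay inside $[w_0,w_4]$, the window Lemma \ref{lemma2} controls), whereas the upper bound develops an out-of-range endpoint $w_5$ in one sub-case. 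Had you used $\xi=z_2$ the problem would simply migrate to the lower bound, so some patch is unavoidable once you insist on proving both directions with a single anchor. Your patch --- invoking the minimizing property of $z_4$ from the proof of Lemma \ref{lemma1} to get $|w_4-w_5|\geq|z_4-z_5|$, then chaining through $|w_3-w_4|\leq|z_3-z_5|\lesssim_{C_1}|z_4-z_5|\leq|w_4-w_5|$ --- is correct, though the inclusion $[w_3,w_4]\subset[z_3,z_5]$ and the chain use only Lemma \ref{lemma1}, not Lemma \ref{lemma2} as you cite. The remaining ingredients (the factorization $b(M,T)=\tfrac{|L|}{|L|+|M|}\cdot\tfrac{|R|}{|M|+|R|}$, the multiplicity-$\leq 3$ bound for the orbit of a union of three consecutive blocks, and the application of the Cross-Ratio Inequality) match the paper's. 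What the paper's reduction buys is brevity; what your version buys is a self-contained two-sided argument that does not rely on the somewhat terse symmetry remark.
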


\begin{proof}
Note that it suffices to prove the second of the two inequalities in \eqref{symreturns} for all $x$
(to get the first inequality from the second, just replace $x$ by $f^{-q_n}(x)$). 

Thus, let $x \in S^{1}$ and let $0\leq i \leq q_{n+1}$ such that $f^{i}(x)$ lies on the interval $J$ with endpoints $z_{1}$ and $z_{3}$ that contains $z_2$, 
where $z_{1}, z_{2}, \cdots, z_{5}$ are the points given by Lemma \ref{lemma1}. Such an $i$ exists because of the simple remark preceding 
the present lemma, applied to $\xi=z_2$ (so that $J_n(z_2)=J$). Then either $f^i(x)\in [z_1,z_2]\subset J$, or $f^i(x)\in (z_2,z_3]\subset J$. 
We prove the lemma assuming the former case (the proof 
in the latter case being similar). 

Let us consider the points $w_{0}=f^{i-q_{n}}(x), w_{1}=f^{i}(x), w_{2}= f^{i+q_{n}}(x)$ and 
 $w_{3}=f^{i+2q_{n}}(x)$. Then we are in the situation of Lemma \ref{lemma2}. 
Consider the interval $T$ with endpoints $f^{-q_{n}}(x)$ and $f^{2q_{n}}(x)$ that contains
 $x$, and let $M=[x,f^{q_n}(x)]\subset T$. Note that 
\begin{equation}\label{ineq1lem3}
 b(M,T)=\dfrac{|x-f^{-q_{n}}(x)| |f^{q_{n}}(x)-f^{2q_{n}}(x)|}{|f^{q_{n}}(x)-f^{-q_{n}}(x)| |x-f^{2q_{n}}(x)|} 
 \leq \dfrac{|x- f^{-q_{n}}(x)|}{|f^{q_{n}}(x)-x|} \ .
\end{equation}
From the inequalities \eqref{manyws} in Lemma \ref{lemma2}, we also have
\begin{equation}\label{ineq2lem3}
 b(f^{i}(M),f^{i}(T))= \dfrac{|w_0-w_1| |w_2-w_3|}{|w_0-w_2||w_1-w_3|} \geq \dfrac{1}{(1+ C_2)^2}\ .
\end{equation}
Since $\{T,f(T),\ldots,f^{i}(T)\}$ has intersection multiplicity at most equal to $3$,  
the Cross-Ratio Inequality tells us that $b(f^{i}(M),f^{i}(T))\leq B b(M,T)$, where the constant $B$ is the same as in the previous lemmas. 
Combining this fact with \eqref{ineq1lem3} and \eqref{ineq2lem3}, we deduce that
\begin{equation}\label{ineq2.5}
  |f^{q_{n}}(x)-x| \leq  B(1+ C_2)^{2}|x-f^{-q_{n}}(x)|.
 \end{equation}
This proves \eqref{symreturns}, provided we take $C_3=B(1+ C_2)^2$.
\end{proof}

\subsection{Comparability of closest returns and beyond}

We now come to the major step towards the proof of Theorem \ref{realbounds}, namely Proposition \ref{lemma4} below.
Roughly speaking, it states that the atoms of the partition $\mathcal{P}_n(c_0)$ that are closest to the critical point $c_0$, including the 
closest return intervals $I_n(c_0)$ and $I_{n+1}(c_0)$, are pairwise comparable.

\vspace{0.5cm}

\begin{figure}[h]
\begin{center}
\psfrag{A}[][]{ $I_{n}^{q_{n+1}-q_{n}}$} 
\psfrag{B}[][][1]{$I_{n+1}$}
\psfrag{C}[][][1]{$I_{n}$} 
\psfrag{D}[][][1]{$I_{n}^{q_{n}}$}
\psfrag{E}[][][1]{$I_{n}^{q_{n+1}}$}
\psfrag{F}[][][1]{$I_{n+1}^{q_{n}}$}
%\psfrag{a}[][]{$f^{q_{n}-q_{n-1}}(c_{0})$} 
%\psfrag{b}[][]{$f^{q_{n}}(c_{0})$} 
\psfrag{c}[][]{$c_{0}$} 
%\psfrag{d}[][]{$f^{-q_{n-1}}(c_{0})$} 
%\psfrag{e}[][]{$f^{2q_{n-1}}(c_{0})$} 
\includegraphics[width=3.5in]{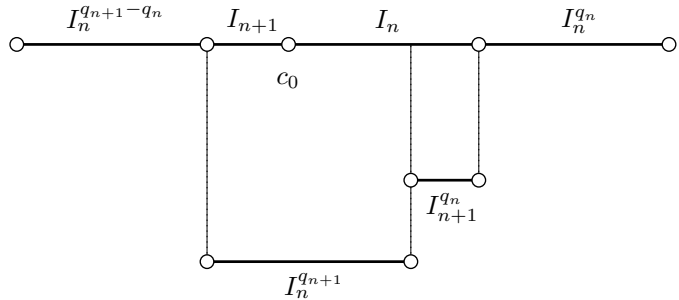}
\end{center}
\caption[slope]{\label{compare6} The six intervals of Proposition \ref{lemma4}.}
\end{figure}

\bigskip 

\begin{proposition}\label{lemma4}
 The six intervals in Figure \ref{compare6} are pairwise comparable. More precisely, there exists a constant $C_4>1$ 
depending only on $f$ such that, for all $n\geq 1$ and for all 
$I,J\in \{I_n,I_{n+1},I_n^{q_n}, I_n^{q_{n+1}}, I_{n+1}^{q_n},I_{n}^{q_{n+1}-q_{n}}\}$, we have 
\begin{equation}\label{labellem4}
 C_4^{-1} \leq \dfrac{|I|}{|J|}\leq C_4  \ .
\end{equation}
\end{proposition}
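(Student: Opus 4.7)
The strategy is to combine Lemma \ref{lemma3} (dynamically symmetric intervals are comparable) with the Cross-Ratio Inequality applied to iterates of $f$ around the critical point $c_0$. By applying Lemma \ref{lemma3} at the base points $f^{q_n}(c_0)$, $f^{q_{n+1}}(c_0)$ and $c_0$ at appropriate levels, one extracts immediately the comparabilities $|I_n^{q_n}| \asymp |I_n|$ (at $f^{q_n}(c_0)$, level $n$) and $|I_n^{q_{n+1}}| \asymp |I_n^{q_{n+1}-q_n}|$ (at $f^{q_{n+1}}(c_0)$, level $n$), together with $|I_{n+1}| \asymp |[c_0, f^{-q_{n+1}}(c_0)]|$ (at $c_0$, level $n+1$) and $|I_{n+1}^{q_n}| \asymp |[f^{q_n}(c_0), f^{q_n-q_{n+1}}(c_0)]|$ (at $f^{q_n}(c_0)$, level $n+1$). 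Since the two latter auxiliary intervals lie respectively inside $I_n$ and inside $I_n^{q_n}$, and $|I_{n+1}|<|I_n|$ by definition of closest returns, these relations immediately yield the ``easy'' direction $|I_{n+1}|,\, |I_{n+1}^{q_n}|,\, |I_n^{q_{n+1}}|,\, |I_n^{q_{n+1}-q_n}| \lesssim |I_n|$.

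To close the chain it suffices to prove the reverse bound $|I_n| \lesssim |I_{n+1}|$. My plan is to apply the Cross-Ratio Inequality to a suitable iterate of $f$—for definiteness, $f^{q_n}$—restricted to a base interval $T$ containing $c_0$ whose length is comparable to $|I_n|$ by Lemma \ref{lemma3}; a natural choice is $T = [f^{-q_n}(c_0), f^{q_n}(c_0)]$. One selects $M \Subset T$ whose endpoints are $\pm q_{n+1}$-iterates of $c_0$, chosen so that the lengths of $M$, of each component of $T\setminus M$, and of their respective images under $f^{q_n}$ can all be expressed, via Lemma \ref{lemma3}, in terms of the six lengths in question. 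Because the family $\{f^j(T)\}_{j=0}^{q_n-1}$ has bounded intersection multiplicity, the Cross-Ratio Inequality bounds the ratio $b(f^{q_n}(M), f^{q_n}(T))/b(M, T)$; combining this bound with the comparabilities from the first step and rearranging should produce the desired lower bound on $|I_{n+1}|/|I_n|$.

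The main obstacle will be precisely this bridging step. In the unicritical case of \cite[\S 3]{dFdM}, the analogous argument uses the Koebe distortion principle, available because $f^{q_n}$ is then a diffeomorphism on suitable one-sided neighborhoods of $I_n$ and $I_{n+1}$—the single critical point $c_0$ sits precisely at their common endpoint. In the multicritical setting the iterates $f^{q_n}$ and $f^{q_{n+1}}$ may pass through other critical points and so need not be diffeomorphisms, forcing Koebe to be replaced by the Cross-Ratio Inequality. As the latter provides only a one-sided distortion bound, extracting the two-sided comparability $|I_n|\asymp|I_{n+1}|$ will require a delicate choice of the pair $(M,T)$, and possibly a second application of the inequality to another iterate, in order to close the chain. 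Once $|I_n|\asymp|I_{n+1}|$ is established, pairwise comparability of all six intervals follows by chaining the relations from the first step.
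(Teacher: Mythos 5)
Your first step, applying Lemma \ref{lemma3} at the base points $f^{q_n}(c_0)$, $f^{q_{n+1}}(c_0)$ and $c_0$, is sound and coincides with the paper's steps (i), (ii) and the ``easy half'' of step (iv). Two issues, however, make the proposal incomplete.

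First, a minor logical gap: establishing $|I_n|\lesssim|I_{n+1}|$ together with your four comparabilities does not by itself chain to all fifteen ratios. In particular, the lower bound $|I_{n+1}^{q_n}|\gtrsim|I_n|$ is not implied by anything you list (the inclusion gives only $|I_{n+1}^{q_n}|\leq|I_n|$), and the paper treats it as a separate step (v). Similarly, the comparability of $I_n^{q_{n+1}-q_n}$ with $I_n$ (paper's step (iii)) needs the inclusions $I_n^{-q_n}\subseteq I_n^{q_{n+1}-q_n}\cup I_n^{q_{n+1}}$ and $I_{n+1}^{-q_{n+1}}\subseteq I_n$; your $|I_n^{q_{n+1}}|\asymp|I_n^{q_{n+1}-q_n}|$ only connects those two to each other.

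Second, and more importantly, the proposed argument for the hard inequality $|I_n|\lesssim|I_{n+1}|$ is missing the essential ingredient: the power law at the critical point $c_0$. The Cross-Ratio Inequality alone cannot produce this bound, because it holds for any sufficiently smooth circle homeomorphism, whereas the comparability $|I_n|\asymp|I_{n+1}|$ is \emph{false} for smooth circle diffeomorphisms whose rotation number has unbounded partial quotients (for a rigid rotation one has $|I_{n+1}|/|I_n|\sim q_{n+1}/q_{n+2}\to 0$ along subsequences). The paper's actual mechanism in step (iv) is: take $T=I_{n+1}\cup I_n\cup I_n^{q_n}$, which has $c_0$ in its interior, and apply the Cross-Ratio Inequality not to the full iterate but to $f^{q_{n+1}-1}$ on the pair $(I_n^1, f(T))$ — that is, after peeling off one application of $f$. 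That single iterate of $f$ crosses the critical point, and the Power Law converts $|I_{n+1}^1|/|I_n^1|$ into $\gamma_0\left(|I_{n+1}|/|I_n|\right)^{s_0}$. Setting $x=|I_{n+1}|/|I_n|$, the resulting inequality has the form $\theta_1 x\leq B\,\theta_2\,x^{s_0}$, and it is precisely the exponent $s_0>1$ that allows one to divide and obtain $x\geq(\theta_1/(B\theta_2))^{1/(s_0-1)}>0$. Your suggested pair $(M,T)$ with $T=[f^{-q_n}(c_0),f^{q_n}(c_0)]$ and the map $f^{q_n}$ never isolates the critical iterate, so the nonlinearity at $c_0$ is never exploited and the chain cannot close.
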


\begin{proof}
 We break up the proof into several steps, as follows. 
 \begin{enumerate}
  \item [(i)] {\it The intervals $I_{n}$ and $I_{n}^{q_{n}}$ are comparable\/}. Indeed, these two intervals are dynamically symmetric with respect to their 
common endpoint $f^{q_n}(c_0)$. Hence, by Lemma \ref{lemma3} we have 
\begin{equation}\label{ineqa1}
   C_3^{-1} |I_{n}| \;\leq\; |I_{n}^{q_{n}}| \;\leq\; C_3 |I_{n}| 
  \end{equation}
 \item[(ii)] {\it The intervals $I_{n}^{q_{n+1}}$ and $I_{n}^{q_{n+1}-q_{n}}$ are comparable\/}. 
Indeed, these two intervals are dynamically symmetric with respect to their common endpoint 
$f^{q_{n+1}}(c_0)$. Hence, again by Lemma \ref{lemma3} we have
  \begin{equation}\label{ineqa2}
   C_3^{-1}|I_{n}^{q_{n+1}}| \leq |I_{n}^{q_{n+1}-q_{n}}| \leq C_3|I_{n}^{q_{n+1}}|.
  \end{equation}
  \item [(iii)] {\it The intervals  $I_{n}^{q_{n+1}-q_{n}}$ and $I_n$ are comparable\/}. 
Consider the interval $I_{n}^{-q_{n}}$, with endpoints $c_{0}$ and $f^{-q_{n}}(c_{0})$. Since such interval is dynamically symmetric 
to the interval $I_n$, we have by the Lemma \ref{lemma3}
  \begin{equation}\label{ineqb1}
   C_3^{-1}|I_{n}^{-q_{n}}| \leq |I_{n}| \leq C_3|I_{n}^{-q_{n}}|.
  \end{equation}
 From the the right-hand side of inequality 
(\ref{ineqb1}), the inclusion $I_{n}^{-q_{n}}\subseteq I_{n}^{q_{n+1}-q_{n}}\cup I_{n}^{q_{n+1}}$ and the left-hand side of inequality (\ref{ineqa2}), we deduce that
  \begin{equation}\label{ineqb2}
   |I_{n}| \leq C_3(C_3+1)|I_{n}^{q_{n+1}-q_{n}}|.
  \end{equation}
  Now, we have $I_{n}^{q_{n+1}}\subseteq I_{n+1}\cup I_{n}$, and also $|I_{n+1}| \leq C_3|I_{n+1}^{-q_{n+1}}|$, because the intervals $I_{n+1}$ and 
and $I_{n+1}^{-q_{n+1}}$ are dynamically symmetric. Moreover, we have 
the inclusion $I_{n+1}^{-q_{n+1}}\subseteq I_{n}$. Combining these facts with the right-hand side of (\ref{ineqa2}), we get
  \begin{equation*}
   |I_{n}^{q_{n+1}-q_{n}}| \leq C_3(C_3+1)|I_{n}| \ .
  \end{equation*}
From this and (\ref{ineqb2}), we arrive at
  \begin{equation}\label{ineqb3}
   C_3^{-1}(C_3+1)^{-1}|I_{n}| \leq |I_{n}^{q_{n+1}-q_{n}}| \leq C_3(C_3+1)|I_{n}|. 
  \end{equation}
  \item [(iv)] {\it The intervals  $I_{n}$ and $I_{n+1}$ are comparable\/}. It is here that we use the power-law at the critical point $c_0$ in an essential way. 
First note that $I_{n+1}^{-q_{n+1}}\subseteq I_{n}$ and that the intervals $I_{n+1}^{-q_{n+1}}$ and $I_{n+1}$ are dynamically symmetric with respect to their common 
endpoint $c_0$. Hence, using Lemma \ref{lemma3} we get 
  \begin{equation}\label{ineqc1}
   |I_{n+1}| \leq C_3|I_{n}| \ .
  \end{equation}
The real issue here, thus, is to prove an inequality in the opposite direction. Let us consider the interval $T=I_{n+1}\cup I_n\cup I_n^{q_n}$ and its image 
$f(T)$ under $f$, which contains the critical value $f(c_0)$; note that the family 
$\{T,f(T),\ldots,f^{q_{n+1}}(T)\}$ has intersection multiplicity equal to 3. We look at the cross-ratio distortion of $f^{q_{n+1}-1}$ on the pair 
$(I_{n}^1, f(T))$. By the Cross-Ratio Inequality, we have
\begin{equation}\label{crithelps1}
 D(f^{q_{n+1}-1}; I_{n}^1, f(T))\;=\;\dfrac{b(I_n^{q_{n+1}}, f^{q_{n+1}}(T))}{b(I_n^1,f(T))}\;\leq\; B \ .
\end{equation}
%We bound the numerator in \eqref{crithelps1} from below as follows.  
But
\begin{equation}\label{crithelps2}
 b(I_n^{q_{n+1}}, f^{q_{n+1}}(T)) = \dfrac{|I_{n+1}^{q_{n+1}}|}{|I_{n+1}^{q_{n+1}}| + |I_n^{q_{n+1}}|} 
\cdot \dfrac{|I_n^{q_{n+1}+q_n}|}{|I_n^{q_{n+1}}| + |I_n^{q_{n+1}+q_n}|} \ .
\end{equation}
Since the intervals $I_n^{q_{n+1}+q_n}$ and $I_n^{q_{n+1}}$ are dynamically symmetric with respect to their common endpoint, we see 
from Lemma \ref{lemma3} that the second fraction on the right-hand side of \eqref{crithelps2} is bounded from below by $C_3^{-1}/(1+C_3)$. 
The intervals $I_{n+1}^{q_{n+1}}$ and $I_{n+1}$ are also dynamically symmetric with respect to their common endpoint, so again by Lemma \ref{lemma3}
we have $C_3^{-1}|I_{n+1}|\leq |I_{n+1}^{q_{n+1}}| \leq C_3|I_{n+1}|$; in addition, $I_{n+1}^{q_{n+1}}\subset I_{n+1}\cup I_n$, so that 
$|I_{n+1}^{q_{n+1}}|\leq |I_{n+1}| + |I_n|$. Putting all these facts back into \eqref{crithelps2}, we deduce that
\begin{equation}\label{crithelps3}
 b(I_n^{q_{n+1}}, f^{q_{n+1}}(T)) \geq \theta_1 \dfrac{|I_{n+1}|}{|I_n|} \ ,
\end{equation}
where $\theta_1=C_3^{-2}(1+C_3)^{-1}(1+C_3+C_3^2+C_3^3)^{-1}$. This bounds the numerator of \eqref{crithelps1} from below, 
so we proceed to bound the denominator from above.  We have
\begin{equation}
 b(I_n^1,f(T)) = \dfrac{|I_{n+1}^1|}{|I_{n+1}^1|+|I_n^1|}\cdot \dfrac{|I_n^{1+q_n}|}{|I_n^1|+|I_n^{1+q_n}|}\ .
\end{equation}
Since the intervals $I_n^1$ and $I_n^{1+q_n}$ are also dynamically symmetric with respect to their common endpoint, applying 
Lemma \ref{lemma3} yet again yields
\begin{equation}\label{crithelps4}
  b(I_n^1,f(T)) \leq \dfrac{C_3}{1+C_3} \dfrac{|I_{n+1}^1|}{|I_n^1|} \ .
\end{equation}
Here, using the power-law at the critical point (at last!) we see that
\[
 \dfrac{|I_{n+1}^1|}{|I_n^1|} \leq \gamma_0 \left(\dfrac{|I_{n+1}|}{|I_n|}\right)^{s_0}\ ,
\]
where $\gamma_0>0$ is the constant given in Definition \ref{def:multicritic} and $s_0>1$ is the criticality of the 
critical point $c_0$. Carrying this information back to \eqref{crithelps4} gives us
\begin{equation}\label{crithelps5}
 b(I_n^1,f(T)) \leq \theta_2 \left(\dfrac{|I_{n+1}|}{|I_n|}\right)^{s_0}\ ,
\end{equation}
where $\theta_2=\gamma_0C_3/(1+ C_3)$. Combining \eqref{crithelps3} and \eqref{crithelps5} we get the inequality
\[
 \dfrac{|I_{n+1}|}{|I_n|} \geq \left(\dfrac{\theta_1}{B\theta_2}\right)^{\frac{1}{s_0-1}} = \theta_3\ .
\]
Summarizing, we have proved that
\begin{equation}\label{crithelps6}
 \theta_3|I_n| \leq |I_{n+1}|\leq C_3|I_n| \ .
\end{equation}
\item [(v)] {\it The intervals $I_{n}$ and $I_{n+1}^{q_{n}}$ are comparable\/}. Note that $I_{n+1}^{q_{n}}\subset I_n$, so 
$|I_{n+1}^{q_{n}}|\leq |I_n|$. We must prove an inequality in the opposite direction. 
For this purpose, let us consider the
interval $T^*=I_{n+1}^{q_n}\cup I_n^{q_n}\cup I_n^{2q_n}$. We shall look at the cross-ratio distortion of the pair 
$(I_n^{q_n}, T^*)$ under the map $f^{q_{n+1}-q_n}$. 
Clearly, the family $\{T^*,f(T^*),\ldots,f^{q_{n+1}-q_n}(T^*)\}$ has intersection 
multiplicity equal to at most $3$. By the Cross-Ratio Inequality, we have
\begin{equation}\label{crithelps7}
 D(f^{q_{n+1}-q_n};\,I_n^{q_n}, T^*)\;=\;\dfrac{b(I_n^{q_{n+1}}, f^{q_{n+1}-q_n}(T^*))}{b(I_n^{q_n}, T^*)}\;\leq\; B
\end{equation}
Now, the intervals $I_{n+1}^{q_{n+1}}$ and $I_{n+1}$ are dynamically symmetric with respect to their common endpoint $f^{q_{n+1}}(c_0)$. 
Also, the intervals $f^{q_{n+1}-q_n}(I_n^{2q_n})=I_n^{q_{n+1}+q_n}$ and $I_n^{q_{n+1}}$ are dynamically symmetric with respect to their 
common endpoint $f^{q_{n+1}+q_n}(c_0)$. Moreover, we have $I_n^{q_{n+1}}\subset I_n\cup I_{n+1}$. Combining these facts 
with \eqref{crithelps6} and Lemma \ref{lemma3}, we deduce after some computations that
\begin{align}\label{crithelps8}
 b(I_n^{q_{n+1}}, f^{q_{n+1}-q_n}(T^*))&= \dfrac{|I_{n+1}^{q_{n+1}}|}{|I_{n+1}^{q_{n+1}}|+|I_n^{q_{n+1}}|} 
\dfrac{|I_n^{q_{n+1}+q_n}|}{|I_n^{q_{n+1}}|+|I_n^{q_{n+1}+q_n}|} \nonumber  \\ 
&\geq \dfrac{C_3^{-2}\theta_3}{(1+C_3)(1+C_3+C_3^2)} \ .
\end{align}
We proceed to bound the denominator in \eqref{crithelps7} from above in similar fashion. 
Since the intervals $I_n^{q_n}$ and $I_n^{2q_n}$ are dynamically symmetric with respect to their common endpoint $f^{q_n}(c_0)$, applying 
Lemma \ref{lemma3} one final time yields
\begin{align}\label{crithelps9}
 b(I_n^{q_n}, T^*)\;&=\; \dfrac{|I_{n+1}^{q_{n}}|}{|I_{n+1}^{q_{n}}|+|I_n^{q_{n}}|} 
\dfrac{|I_n^{2q_n}|}{|I_n^{q_{n}}|+|I_n^{2q_n}|} \leq 
\dfrac{|I_{n+1}^{q_{n}}|}{|I_n^{q_{n}}|} \dfrac{C_3}{1+C_3^{-1}}\nonumber \\
&\leq\; \dfrac{C_3^2}{1+C_3^{-1}}\dfrac{|I_{n+1}^{q_{n}}|}{|I_n|} \ .
\end{align}
Putting \eqref{crithelps8} and \eqref{crithelps9} back into \eqref{crithelps7}, we deduce at last that
\begin{equation}\label{crithelps10}
 \theta_4 |I_n|\;\leq\; |I_{n+1}^{q_n}| \leq |I_n| \ ,
\end{equation}
where
\[
 \theta_4=\dfrac{(1+ C_3^{-1})C_3^{-4}\theta_3}{B(1+C_3)(1+C_3+C_3^2)}
\]
\end{enumerate}
The above estimates -- more precisely the inequalities \eqref{ineqa1}, \eqref{ineqa2}, \eqref{ineqb3}, \eqref{crithelps6} and \eqref{crithelps10} -- 
provide bounds for 5 of the 15 comparability ratios involved in \eqref{labellem4}. 
Each of the remaining 10 comparability ratios is obtained by suitable telescoping products of at most 4 of these 5 ratios. 
Thus, define $K$ to be the largest of all constants greater than $1$ appearing as bounds in 
the above estimates, namely $K=\max\{C_3(C_3+1), \theta_3^{-1}, \theta_4^{-1}  \}$. With this choice, all 15 inequalities involved in 
\eqref{labellem4} are established provided we take $C_4=K^4$.  
\end{proof}

\subsection{Proof of Theorem \ref{realbounds}}

Finally, to obtain Theorem \ref{realbounds}, we use the Cross-Ratio Inequality to propagate the information in Proposition \ref{lemma4} 
to any pair of adjacent intervals in the dynamical partition $\mathcal{P}_n$. Let $M\in \mathcal{P}_n$, and let $L, R\in \mathcal{P}_n(c_0)$ be 
its two immediate neighbors; write $T=L\cup M\cup R$. It suffices to show that the $b$-cross-ratio $b(M,T)$ is bounded from below by a constant depending only 
on the constant $C_4$ of Proposition \ref{lemma4}. There are two cases to consider, depending on whether $M$ is a {\it short\/} or a {\it long\/} atom of 
the dynamical partition $\mathcal{P}_n$. If $M$ is a short atom, say $M=I_{n+1}^j$ with $j<q_n$, then $L$ and $R$ are both long atoms. 
In fact, the combinatorics tells us that one of them, say $R$, is the interval $I_n^j$, whereas the other, $L$, is the interval $I_n^{j+q_{n+1}-q_n}$.
But then the homeomorphism $f^{q_n-j}$ maps $M$ onto $M^*=I_{n+1}^{q_n}$ and $T$ onto $T^*=I_n^{q_{n+1}}\cup I_{n+1}^{q_n}\cup I_{n}^{q_n}$. 
By Proposition \ref{lemma4}, the cross-ratio $b(M^*,T^*)$ is bounded from below (by a constant depending only on $C_4$).
Since the intervals $T,f(T),\ldots,f^{q_n-j}(T)=T^*$ have multiplicity of intersection at most $3$, it follows from the Cross-Ratio Inequality 
that 
\[
D(f^{q_n-j};M,T)\;=\; \frac{b(M^*,T^*)}{b(M,T)}\;\leq\; B \ .
\] 
Therefore $b(M,T)$ is also bounded from below (by a constant depending only on $C_4$). The same argument applies, {\it mutatis mutandis\/}, when 
$M$ is a long atom. This finishes the proof. 

\subsection{On the notion of comparability}\label{comparenotion}

We have presented the proof of the real bounds in such a way as to easily keep track of the constants involved in all the estimates -- so that 
one could actually write down the constant $C$ in Theorem \ref{realbounds} explicitly. 
From this point on, however, rather than continuing to keep track of such constants, 
we will use the same notion and notation of comparability introduced in \cite{dFdM}. Namely, 
given two positive real numbers $\alpha$ and $\beta$, we will say that $\alpha$ is {\it comparable\/} to $\beta$ {\it modulo\/} $f$ 
(or simply that they are {\it comparable\/}) if there exists a constant $K>1$ depending only on $C=C(f)$ such that 
$K^{-1}\beta\leq \alpha\leq K\beta$. This relation will be denoted $\alpha\asymp \beta$. As observed in 
\cite[p.~350]{dFdM}, comparability modulo $f$ is reflexive and symmetric but not transitive: if we are given a comparability chain 
$\alpha_1\asymp \alpha_2\asymp\cdots \asymp \alpha_k$, 
we can only say that $\alpha_1\asymp \alpha_k$ if the length $k$ of the chain is bounded by a constant that depends only on $f$. 
In everything we do in this paper, the lengths of all comparability chains are in fact universally bounded.  

\subsection{Beau bounds} \label{sec:beau}

In the study of renormalization of one-dimensional dynamical systems -- especially when pursuing Sullivan's strategy 
as outlined in \S \ref{sec:intro} -- one usually tries to get  
bounds which are {\it asymptotically universal\/}. In other words, in the context of renormalization it is desirable to know whether the constant 
$C=C(f)$ in Theorem \ref{realbounds} can, for all sufficiently large $n$, be replaced by a universal constant. Bounds of this type are called 
{\it beau\/} by Sullivan in \cite{Su}. We do not attempt to prove in the present paper that our bounds are {\it beau\/}, since this property is not 
relevant for our purposes.
  
\section{Geometry of dynamical partitions}\label{sec:geom}

In this section we present some geometric consequences of the real bounds that will be crucial in the proof of our main theorem.
The results below refer to the dynamical partitions $\mathcal{P}_n(c_k)$ ($0\leq k\leq N-1$, $n\in \mathbb{N}$) of a multicritical circle map $f$ for which the real 
bounds of Theorem \ref{realbounds} hold true. Recall that the atoms of each partition $\mathcal{P}_n(c_k)$ are of two types: the {\it long\/} atoms, {\it i.e.\/} 
those of the form $I_n^i(c_k)$, $0\leq i<q_{n+1}$, and the {\it short\/} atoms, {\it i.e.\/} 
those of the form $I_{n+1}^j(c_k)$, $0\leq j<q_{n}$. In what follows, we use the notion (and notation) of comparability 
introduced in \S \ref{comparenotion}. 

\subsection{Intersecting atoms are comparable}
The first result states that any two intersecting atoms belonging to dynamical partitions of two distinct critical points at the same level $n$ 
are comparable.

\begin{lemma}\label{intersectcomp}
 Let $c, c'$ be any two critical points of our map $f$. If $\Delta\in \mathcal{P}_n(c)$ and $\Delta'\in \mathcal{P}_n(c')$ are two atoms 
such that $\Delta\cap \Delta'\neq \O$, then $|\Delta|\asymp |\Delta'|$, i.e. they are comparable.
\end{lemma}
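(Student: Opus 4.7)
My plan is to prove the lemma by first establishing a base case in which $\Delta'$ has $c'$ as an endpoint, and then propagating to the general case via the Cross-Ratio Inequality. By the real bounds applied to $\mathcal{P}_n(c')$, the two atoms $I_n(c')$ and $I_{n+1}(c')$ are comparable, so without loss of generality we may assume in the base case that $\Delta'=I_n(c')$. Let $M\in\mathcal{P}_n(c)$ be the unique atom containing $c'$ in its interior (well-defined for $n$ large enough, by the observation preceding Theorem~\ref{realbounds}). I would first show that $|I_n(c')|\asymp|M|$, and then deduce, from the real bounds applied to $\mathcal{P}_n(c)$, that any atom $\Delta\in\mathcal{P}_n(c)$ meeting $I_n(c')$ must lie in a universally bounded chain of atoms adjacent to $M$, each comparable to $|M|\asymp|I_n(c')|$; this would finish the base case.

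The key claim $|I_n(c')|\asymp|M|$ would be established via the following observation. Writing $M=f^i(I_n(c))$ with endpoints $p=f^i(c)$ and $q=f^{i+q_n}(c)$, one has $f^{q_n}(p)=q$, so $|f^{q_n}(p)-p|=|M|$; moreover, by Lemma~\ref{lemma3} applied at $q$, the image $f^{q_n}(M)$ is an interval adjacent to $M$ with $|f^{q_n}(M)|\asymp|M|$. The heart of the argument, and what I expect to be the main technical obstacle, is propagating this endpoint comparison to the interior of $M$: one wants $|f^{q_n}(x)-x|\asymp|M|$ for every $x\in M$, and in particular for $x=c'$, which would give $|I_n(c')|\asymp|M|$. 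This would follow from bounded cross-ratio distortion of $f^{q_n}$ on a slight enlargement $T=L\cup M\cup R$ (where $L,R$ are the atoms of $\mathcal{P}_n(c)$ adjacent to $M$), applying the Cross-Ratio Inequality together with the fact that the forward orbit $\{T,f(T),\ldots,f^{q_n}(T)\}$ has universally bounded multiplicity of intersection at level $n$, combined with the monotonicity of the restriction $f^{q_n}\colon M\to f^{q_n}(M)$.

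For the general case, write $\Delta'=f^j(I_\bullet(c'))$ and $\Delta=f^k(I_*(c))$; by swapping the roles of $c$ and $c'$ if necessary, we may assume $k\ge j$. Pulling back by $f^{-j}$ gives $f^{-j}(\Delta')=I_\bullet(c')$ in the base configuration, paired with $f^{-j}(\Delta)=f^{k-j}(I_*(c))\in\mathcal{P}_n(c)$, which still meets $I_\bullet(c')$; the base case then yields $|f^{-j}(\Delta)|\asymp|I_\bullet(c')|$. To transfer this back under $f^j$, I would apply the Cross-Ratio Inequality to $f^j$ on a suitable interval $T^*\supset f^{-j}(\Delta)\cup I_\bullet(c')$; the forward orbit $\{T^*,f(T^*),\ldots,f^j(T^*)\}$ has universally bounded multiplicity of intersection at level $n$, so the cross-ratio distortion of $f^j$ is universally bounded. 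Combined with the base-case comparability, this yields $|\Delta|\asymp|\Delta'|$ and completes the proof.
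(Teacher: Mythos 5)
Your central idea — that for $x$ in a long atom $M=I_n^i(c)$ one should show $|f^{q_n}(x)-x|\asymp|M|$ — is sound and is essentially the same mechanism the paper exploits, but the tool you reach for to prove it is the wrong one, and the surrounding architecture is more elaborate than necessary. The Cross-Ratio Inequality only bounds the distortion $D(f^{q_n};M',T)$ \emph{from above}; applied on the enlargement $T=L\cup M\cup R$ it will happily give you an \emph{upper} bound on ratios such as $|[q,f^{q_n}(x)]|/|[p,x]|$ (where $M=[p,q]$), but it cannot give the \emph{lower} bound $|f^{q_n}(x)-x|\gtrsim|M|$ that is the whole point, since there is no lower bound on cross-ratio distortion for a non-M\"obius map. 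What does give the lower bound, and cheaply, is Lemma~\ref{lemma3}: since $x\in M=[p,q]$ with $f^{-q_n}(x)\in f^{-q_n}(M)$ on the $p$-side and $f^{q_n}(x)\in f^{q_n}(M)$ on the $q$-side, one has $[f^{-q_n}(x),f^{q_n}(x)]\supseteq M$, hence $|f^{q_n}(x)-x|+|x-f^{-q_n}(x)|\geq|M|$, and Lemma~\ref{lemma3} makes the two summands comparable, forcing each to be $\gtrsim|M|$; the matching upper bound comes from $[x,f^{q_n}(x)]\subset M\cup f^{q_n}(M)$ together with $|f^{q_n}(M)|\asymp|M|$ (Lemma~\ref{lemma3} at $q$). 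This is precisely the role Lemma~\ref{lemma3} plays in the paper's case (i), where the inclusions $\Delta'\subset\Delta\cup f^{q_n}(\Delta)$ and $\Delta\subset f^{-q_n}(\Delta')\cup\Delta'$ are used directly on the two intersecting atoms, bypassing any base case.

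Once the pointwise estimate $|f^{q_n}(x)-x|\asymp|M_x|$ (with $M_x$ the long atom of $\mathcal{P}_n(c)$ containing $x$) is available, your reduction scheme becomes unnecessary and, as sketched, has gaps. The ``universally bounded chain of atoms adjacent to $M$'' step is not automatic — a priori consecutive atoms of $\mathcal{P}_n(c)$ could shrink geometrically, so covering $I_n(c')$ by boundedly many of them needs an argument you do not supply; the cleaner route is simply to apply the pointwise estimate at an endpoint of $\Delta$ lying in $I_n^j(c')$ (or vice versa), which handles the long/long case directly for all $j$ without any pull-back/push-forward by $f^{\pm j}$. The push-forward step itself would also need care: a bound on cross-ratio distortion of $f^j$ does not immediately convert comparability of $f^{-j}(\Delta)$ and $I_\bullet(c')$ into comparability of $\Delta$ and $\Delta'$ without a careful choice of the test pair. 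Finally, your proposal does not address the long/short case (one atom long in $\mathcal{P}_n(c)$, the other a short atom $I_{n+1}^j(c')$), which the paper treats separately by dropping down to level $n+1$ via the sub-atom $I_{n+1}^{i+q_n}(c)\subset\Delta$; the WLOG replacement $I_{n+1}^j(c')\rightsquigarrow I_n^j(c')$ is valid (they share the endpoint $f^j(c')$ and are comparable by the real bounds), but that step should be stated rather than hidden inside the $j=0$ normalization.
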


\begin{proof}
 Let $C=C(f)>1$ be the constant given by the real bounds (Theorem \ref{realbounds}). There are three cases to consider, according to the 
types of atoms we have: long/long, long/short, and short/short. More precisely, we have the following three cases.
\begin{enumerate}
 \item[(i)] We have $\Delta=I_n^i(c)$ and $\Delta'=I_n^j(c')$, where $0\leq i,j <q_{n+1}$. 
Here we may assume that $f^j(c')\in \Delta= [f^i(c),f^{i+q_n}(c)]$. Then $f^{i+q_n}(c)\in \Delta'=[f^j(c'),f^{j+q_n}(c')]$, 
and we have the situation depicted in Figure \ref{intercomp}($a$). Using the monotonicity of $f^{q_n}$, we see that 
$\Delta'\subset \Delta\cup f^{q_n}(\Delta)$. Applying Lemma \ref{lemma3} to $x=f^{i+q_n}(c)$, we see that 
$\Delta=[f^{-q_n}(x),x]$ and $f^{q_n}(\Delta)=[x, f^{q_n}(x)]$ satisfy $|f^{q_n}(\Delta)|\leq C|\Delta|$, and from this 
it follows that $|\Delta'|\leq (1+C)|\Delta|$. Conversely, we also have $\Delta\subset f^{-q_n}(\Delta')\cup \Delta'$. 
Again applying Lemma \ref{lemma3}, this time to $x=f^j(c')$, we deduce just as before that $|f^{-q_n}(\Delta')|\leq C|\Delta'|$, 
and therefore $|\Delta|\leq (1+C)|\Delta'|$. Hence $\Delta$ and $\Delta'$ are comparable in this case.

\item[(ii)] We have $\Delta=I_n^i(c)$ and $\Delta'=I_{n+1}^j(c')$, where $0\leq i <q_{n+1}$ and $0\leq j<q_n$. 
Here, we look at the interval $I_{n+1}^{i+q_n}(c)\subset \Delta$. This interval shares an endpoint with $\Delta$ 
(namely $f^{i+q_n}(c)$) and it is also an atom of $\mathcal{P}_{n+1}(c)$. In particular, $|I_{n+1}^{i+q_n}(c)|\asymp |\Delta|$, 
by the real bounds. There are now two sub-cases. If $\Delta'\cap I_{n+1}^{i+q_n}(c) \neq \O$, then, since 
$\Delta'$ also belongs to $\mathcal{P}_{n+1}(c')$, case (i) above tells us that $|\Delta'|\asymp |I_{n+1}^{i+q_n}(c)|$, and therefore 
$\Delta'$ is comparable to $\Delta$ in this sub-case. On the other hand, if $\Delta'\cap I_{n+1}^{i+q_n}(c) = \O$, then 
we must have $f^j(c')\in \Delta$ (see Figure \ref{intercomp}($b$)). In this sub-case, we consider the interval $I_n^j(c')\in \mathcal{P}_n(c')$, 
which also has $f^j(c')$ as an endpoint. Then we have $\Delta \cap I_n^j(c')\neq \O$, and again by case (i) we have 
$|\Delta|\asymp |I_n^j(c')|$. But by the real bounds we have $|I_n^j(c')|\asymp |I_{n+1}^j(c')|=|\Delta'|$, so 
$\Delta'$ is comparable to $\Delta$ also in this sub-case. 

\item[(iii)] We have $\Delta=I_{n+1}^i(c)$ and $\Delta'=I_{n+1}^j(c')$, where $0\leq i,j <q_{n}$. 
This case is entirely analogous to case (i). 

\end{enumerate}

\begin{figure}[t]
\begin{center}
\psfrag{i1}[][]{ $\Delta=I_n^i(c)$} 
\psfrag{i2}[][][1]{$f^{q_n}(\Delta)$}
\psfrag{i3}[][][1]{$f^{-q_n}(\Delta')$} 
\psfrag{i4}[][][1]{$\Delta'=I_n^j(c')$}
\psfrag{i5}[][][1]{$I_{n+1}^{i+q_n}(c)$}
\psfrag{i6}[][][1]{$\Delta'\!=\!I_{n+1}^j\!(c')$}
\psfrag{i7}[][][1]{$I_n^j(c')$}
\psfrag{a}[][][1]{$(a)$}
\psfrag{b}[][][1]{$(b)$}
%\psfrag{a}[][]{$f^{q_{n}-q_{n-1}}(c_{0})$} 
%\psfrag{b}[][]{$f^{q_{n}}(c_{0})$} 
\includegraphics[width=4.0in]{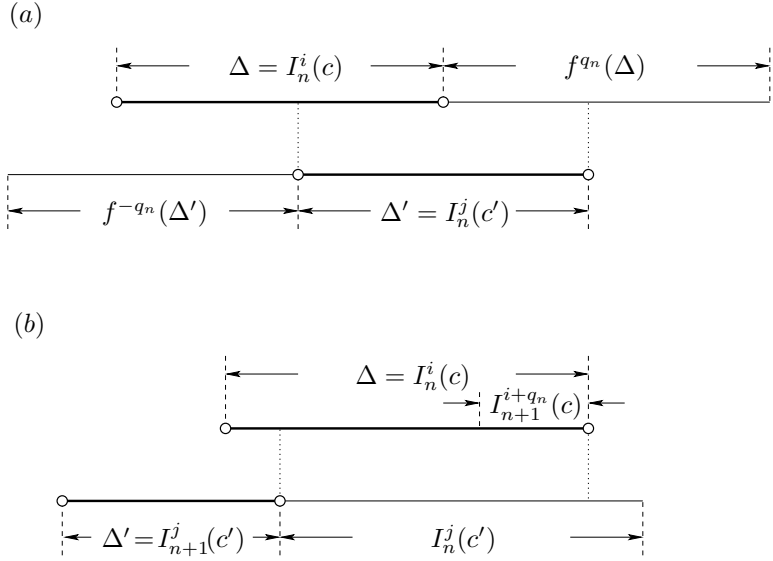}
\end{center}
\caption[intercomp]{\label{intercomp} The cases long/long and long/short of Lemma \ref{intersectcomp}.}
\end{figure}

\end{proof}

\subsection{Critical atoms are large}
Let us now consider the first return map to the interval $I_n(c_0)\cup I_{n+1}(c_0)$, or equivalently the pair of maps 
$f^{q_n}|_{I_{n+1}(c_0)}\,,\,f^{q_{n+1}}|_{I_{n}(c_0)}$. Besides $c_0$ (which is critical for both maps in the pair), 
this return map has at most $N-1$ other critical points: some in $I_n(c_0)$, and some in $I_{n+1}(c_0)$. 
Our next auxiliary result states that the intervals of the dyamical partition at the next level ($\mathcal{P}_{n+1}(c_0)$) 
which contain these critical points of the return map at level $n$ must be comparable with their parent atom ($I_n(c_0)$ or 
$I_{n+1}(c_0)$). 

\begin{lemma}\label{criticlarge}
 Let $0\leq k< a_{n+1}$ be such that the interval $f^{q_n+kq_{n+1}}(I_{n+1}(c_0))\subset I_n(c_0)$ contains a critical point 
of $f^{q_{n+1}}$. Then 
\begin{equation}\label{largecritic}
 \left|f^{q_n+kq_{n+1}}(I_{n+1}(c_0))\right|\asymp \left|I_n(c_0)\right| \ .
\end{equation}
\end{lemma}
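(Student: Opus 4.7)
The plan is to exploit the fact that the critical point of $f^{q_{n+1}}$ in $J$ eventually hits an actual critical point $c_i$ of $f$, so that the image atom at that time can be related to the dynamical partitions of $c_i$ via Lemma~\ref{intersectcomp}. Concretely, I would pick $c^*\in J$ a critical point of $f^{q_{n+1}}$ and choose the minimal integer $l\in\{1,\dots,q_{n+1}-1\}$ with $f^l(c^*)=c_i$ for some $1\le i\le N-1$. Writing $J=f^{q_n+kq_{n+1}}(I_{n+1}(c_0))$, the image $\Delta:=f^l(J)$ is an atom of $\mathcal{P}_{n+1}(c_0)$ that contains $c_i$, and $\Delta$ sits inside the long atom $I_n^l(c_0)=f^l(I_n(c_0))\in\mathcal{P}_n(c_0)$, which also contains $c_i$.

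Next I would chain Lemma~\ref{intersectcomp} with the real bounds at $c_i$. At level $n$, both $I_n^l(c_0)$ and $I_n(c_i)$ contain $c_i$, so $|I_n^l(c_0)|\asymp|I_n(c_i)|$. At level $n+1$, both $\Delta$ and $I_{n+1}(c_i)$ contain $c_i$, so $|\Delta|\asymp|I_{n+1}(c_i)|$. The real bounds give $|I_{n+1}(c_i)|\asymp|I_n(c_i)|$, since $I_n(c_i)$ and $I_{n+1}(c_i)$ are adjacent atoms in $\mathcal{P}_n(c_i)$ sharing the endpoint $c_i$. Combining these three short comparabilities yields the key estimate $|\Delta|\asymp|I_n^l(c_0)|$: on the $c_i$-side of $f^l$, the critical sub-atom already occupies a definite fraction of its parent long atom.

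To conclude $|J|\asymp|I_n(c_0)|$, I would transfer this ratio back across $f^l$ using the Koebe Distortion Principle (Lemma~\ref{koebe}). The iterates $I_n^k(c_0)$ for $0\le k\le l$ are pairwise disjoint atoms of $\mathcal{P}_n(c_0)$, so $\sum_{k=0}^l|I_n^k(c_0)|\le 1$; the comparability $|\Delta|\asymp|I_n^l(c_0)|$ together with the real bounds applied to the $\mathcal{P}_{n+1}$-neighbors of $\Delta$ inside $I_n^l(c_0)$ show that $I_n^l(c_0)$ contains a definite scaled neighborhood of $\Delta$. Provided $f^l$ is a diffeomorphism on an interval containing $I_n(c_0)$, Koebe then yields $|J|/|I_n(c_0)|\asymp|\Delta|/|I_n^l(c_0)|\asymp 1$, as desired.

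The main obstacle I anticipate is precisely the diffeomorphism hypothesis: $f^l|_{I_n(c_0)}$ may have extra critical points if some $I_n^k(c_0)$ with $k<l$ contains a critical point of $f$. Such obstructions correspond to other critical points of $f^{q_{n+1}}$ in $I_n(c_0)$, of which there are at most $N-1$. I would handle these by ordering the interior critical points of $f^{q_{n+1}}|_{I_n(c_0)}$ by first-hit time, treating the smallest one first (where Koebe applies directly on all of $I_n(c_0)$), and then inductively isolating the remaining critical atoms on smaller sub-intervals on which the diffeomorphism condition is restored, using the real bounds at each step to preserve the relevant scaled neighborhoods.
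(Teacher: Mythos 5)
The first half of your argument -- using Lemma~\ref{intersectcomp} at the hit time $l$ to obtain $|f^l(J)|\asymp|I_n^l(c_0)|$ -- matches the paper's proof almost exactly (the paper chains $|I_n^j(c_0)|\asymp|I_{n+1}(c_1)|\asymp|f^j(\Delta)|$; your detour through $I_n(c_i)$ is an equivalent, slightly longer comparability chain). The genuine gap is in the transfer-back step. You propose to apply Koebe's distortion principle (Lemma~\ref{koebe}) to $f^l$, and you correctly flag that $f^l$ need not be a diffeomorphism on $I_n(c_0)$; but your repair -- ordering the interior critical points of $f^{q_{n+1}}$ by first-hit time and inducting -- is not worked out and does not obviously close. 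Two concrete obstacles. First, $c_0$ itself is a critical point of $f$ sitting at the boundary of $I_n(c_0)$, so $Df^l(c_0)=0$ for every $l\ge 1$; any Koebe domain must exclude a neighborhood of $c_0$, and you would have to justify that the truncated domain still yields a definite scaled collar of the critical spot (not automatic when that spot is the one adjacent to $I_{n+2}(c_0)$). Second, once you pass to the second smallest first-hit time $l_2$, the domain of $f^{l_2}$ must also avoid the first critical point $c^*$; the position of $c^*$ inside its own atom is uncontrolled, so the required collar on the $c^*$-side may not exist without further work, and this threatens exactly the scaled-neighborhood hypothesis in Lemma~\ref{koebe}.

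The paper avoids Koebe here altogether. It considers $T=\Delta\cup J\cup I_{n+1}^{q_n}(c_0)$, with $J$ the gap between the critical spot $\Delta$ and the boundary spot $I_{n+1}^{q_n}(c_0)$, and applies the Cross-Ratio Inequality to $f^j$ on the pair $(J,T)$: this gives $b(J,T)\ge C^{-1}\,b(f^j(J),f^j(T))$ with no diffeomorphism hypothesis at all -- the Cross-Ratio Inequality is built precisely to tolerate critical points inside the orbit of $T$. Since the three consecutive intervals $f^j(\Delta)$, $f^j(J)$, $I_{n+1}^{j+q_n}(c_0)$ are pairwise comparable (your claim together with the real bounds), the cross-ratio $b(f^j(J),f^j(T))$ is bounded below, hence so is $b(J,T)$, and a direct computation then yields $|\Delta|\gtrsim|I_n(c_0)|$. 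If you tried to salvage a Koebe-based transfer you would essentially have to reprove, case by case and in the presence of nearby critical points, comparabilities that the Cross-Ratio Inequality already delivers for free; this is precisely why the paper works with cross-ratio distortion rather than derivative distortion throughout \S\ref{sec:realbounds}.
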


\begin{proof}
 If $k=0$ there is nothing to prove, since we already know from the real bounds that 
$|f^{q_n}(I_{n+1}(c_0))|\asymp |I_n(c_0)|$. Hence we assume that $1\leq k \leq a_{n+1}-1$.  
Let us write $\Delta=f^{q_n+kq_{n+1}}(I_{n+1}(c_0))$ in this proof. Let $0<j\leq q_{n+1}$ be such that 
$f^j(\Delta)\ni c_1$, where $c_1\neq c_0$ is another critical point of $f$. Note that 
$I_n^j(c_0)=f^j(I_n(c_0))\supset f^j(\Delta)$. 
We claim that $|f^j(\Delta)|\asymp |f^j(I_n(c_0))|$. This is a consequence of the following two facts.
\begin{enumerate}
 \item[(i)] We have $|I_n^j(c_0)|\asymp |I_{n+1}(c_1)|$. Indeed, these two intervals have non-empty intersection 
(they both contain $c_1$), and since $I_n^j(c_0)\in \mathcal{P}_n(c_0)$ and $I_{n+1}(c_1)\in \mathcal{P}_n(c_1)$, 
their comparability follows from Lemma \ref{intersectcomp}. 
 %\item[(ii)] We have $|I_n(c_1)|\asymp |I_{n+1}(c_1)|$. This is, of course, immediate from the real bounds.
 \item[(ii)] We have $|I_{n+1}(c_1)|\asymp |f^j(\Delta)|$. To see why, first note that 
\[
 j+q_n+kq_{n+1}\leq q_n+(k+1)q_{n+1} \leq q_n+a_{n+1}q_{n+1} = q_{n+2}\ ,
\]
from which it follows that 
\[
 f^j(\Delta)= I_{n+1}^{j+q_n+kq_{n+1}}(c_0) \in \mathcal{P}_{n+1}(c_0)\ .
\]
Since $I_{n+1}(c_1)\in \mathcal{P}_{n+1}(c_1)$, and $f^j(\Delta)\cap I_{n+1}(c_1)\supset \{c_1\}\neq \O$, 
we may again apply Lemma \ref{intersectcomp} to deduce that $I_{n+1}(c_1)$ and $f^j(\Delta)$ are comparable. 
\end{enumerate}

With the claim at hand, we proceed as follows. Consider the (closure of the) gap between $\Delta$ and $I_{n+1}^{q_n}$ 
inside $I_n(c_0)$, namely the interval $J=\bigcup_{i=1}^{k-1} I_{n+1}^{q_n+iq_{n+1}}(c_0)$. Note that if $k=1$ then $J=\O$; in this case 
$\Delta$ and $I_{n+1}^{q_n}$  are two adjacent atoms of $\mathcal{P}_{n+1}(c_0)$, hence they are comparable by the real bounds 
(Theorem \ref{realbounds}) and there is nothing to prove. Therefore we assume that $k\geq 2$, so that $J\neq \O$. 
We already know from the above claim that $|f^j(\Delta)|\asymp |I_n^j(c_0)|$, and the real bounds also tell us that 
$|I_n^j(c_0)|\asymp|I_{n+1}^{j+q_n}(c_0)|$. Moreover, we have $I_{n+1}^{j+q_n+q_{n+1}}(c_0)\subseteq f^j(J)\subset I_n^j(c_0)$. Since 
$|I_{n+1}^{j+q_n+q_{n+1}}(c_0)|\asymp |I_{n+1}^{j+q_n}(c_0)|$, because these two intervals are consecutive atoms of $\mathcal{P}_{n+1}(c_0)$, 
it follows that $|f^j(J)|\asymp |I_{n+1}^{j+q_n}(c_0)|$. In other words, the consecutive intervals $f^j(\Delta)$, $f^j(J)$ and $I_{n+1}^{j+q_n}(c_0)$ 
are pairwise comparable. In particular, the $b$-cross-ratio determined by these three intervals is bounded from above and from below, 
{\it i.e.\/} there exists a constant $K>1$ depending only on the constant $C$ of the real bounds such that
\begin{equation}\label{JTcross}
 K^{-1}\leq b(f^j(J),f^j(T)) \leq K\ .
\end{equation}
Here we have written $T=\Delta \,\cup\, J \,\cup\, I_{n+1}^{q_n}(c_0)$. Note that 
$T, f(T),\ldots, f^j(T)$ are pairwise disjoint. Therefore, by the cross-ratio inequality applied to the 
homeomorphism $f^j$ (and $m=1$), we have $D(f^j; J, T)\leq C$, or equivalently $b(f^j(J), f^j(T))\leq C b(J,T)$. 
Using the lower estimate in \eqref{JTcross}, we see that $b(J,T)\geq C^{-1}K^{-1}$, that is,
\begin{equation}\label{JTcross2}
 \frac{|\Delta|\,|I_{n+1}^{q_n}(c_0)|}{|\Delta\,\cup\, J|\,|J\,\cup\, I_{n+1}^{q_n}(c_0)|} \geq (CK)^{-1}\ .
\end{equation}
But, since $J\supseteq I_{n+1}^{q_n+q_{n+1}}(c_0)$, and since $I_{n+1}^{q_n+q_{n+1}}(c_0)$ and $I_{n+1}^{q_n}(c_0)$ are adjacent atoms 
of $\mathcal{P}_{n+1}(c_0)$, we have by the real bounds
\[
 |\Delta \cup J|>|J|\geq |I_{n+1}^{q_n+q_{n+1}}(c_0)| \geq  C^{-1} |I_{n+1}^{q_n}(c_0)|\ .
\]
Moreover, $|I_{n+1}^{q_n}(c_0)|\geq C^{-1}|I_n(c_0)|$, again by the real bounds. 
Putting these facts back into \eqref{JTcross2}, we deduce that
\[
 |\Delta| \geq C^{-2}K^{-1} |J\,\cup\, I_{n+1}^{q_n}(c_0)| > C^{-3}K^{-1} |I_n(c_0)|\ .
\]
This shows that $\Delta$ and $I_n(c_0)$ are comparable. Hence \eqref{largecritic} is established, and the proof 
of Lemma \ref{criticlarge} is complete. 
\end{proof}

\subsection{Building an auxiliary partition}\label{auxpart}
In this subsection we will construct a suitable refinement of the dynamical partition $\mathcal{P}_n(c_0)$ (for each $n\geq1$). This auxiliary partition, 
which we denote by $\mathcal{P}_n^*(c_0)$, is finer than $\mathcal{P}_n(c_0)$ but coarser than $\mathcal{P}_{n+1}(c_0)$. 
Such auxiliary partition will be needed in the construction of the fine grid of \S \ref{secmainthm}. 

From now on we write, for $0\leq k < a_{n+1}$, $\Delta_k=f^{q_n+kq_{n+1}}(I_{n+1}(c_0))$. Note that each 
$\Delta_k$ is an atom of the dynamical partition $\mathcal{P}_{n+1}(c_0)$, and that 
\[\bigcup_{k=0}^{a_{n+1}-1} \Delta_k= I_n(c_0)\setminus I_{n+2}(c_0) \ .\]
We consider the times $0\leq k_1<k_2<\cdots <k_r<a_{n+1}$ having the property that $\Delta_{k_i}$ contains a critical point 
of $f^{q_{n+1}}$. These are called the {\it critical times\/} at level $n$. For convenience of notation, we also 
define $k_0=0$. Note that $f^{q_{n+1}}$ has at most $N$ critical points in $I_n(c_0)$, where $N$ is the total number of critical points of $f$. 
Since each such critical point belongs to at most two of the $\Delta_k$'s, we see that  
$r\leq 2N$. Thus, although the non-negative integer $r$ may depend on $n$ (the level of renormalization), it nevertheless ranges over only finitely 
many values. The critical times $k_i$ also depend on $n$. The intervals $\Delta_{k_i}$ for $0\leq i\leq r$ will be called 
{\it critical spots\/}. 

For each $i=0,1,\ldots,r-1$, let $G_i\subseteq I_n(c_0)\setminus I_{n+2}(c_0)$ be the gap between the 
two consecutive critical spots $\Delta_{k_i}$ and $\Delta_{k_{i+1}}$ inside 
$I_n(c_0)$, namely the interval
\[
 G_i=\bigcup_{k=k_i+1}^{k_{i+1}-1} \Delta_k \ .
\]
We also define, for $i=r$, 
\[
 G_r=\bigcup_{k=k_r+1}^{a_{n+1}-1} \Delta_k \ .
\]
We call $G_i$ the $i$-th {\it bridge\/} of $I_n(c_0)$. See figure \ref{bridgespots}. 
We remark that it may well be the case that $G_i=\O$ for some (or all!) values of $i$. 

\begin{figure}[h]
\begin{center}
\psfrag{c}[][]{ $c_0$} 
\psfrag{I1}[][][1]{$I_{n+2}(c_0)$}
\psfrag{I}[][][1]{$I_{n}(c_0)$} 
\psfrag{f}[][][1]{$\;\;f^{q_{n}}(c_0)$}
\psfrag{d0}[][][1]{$\Delta_0$}
\psfrag{d1}[][][1]{$\Delta_{k_i}$}
\psfrag{d2}[][][1]{$\Delta_{k_{i+1}}$}
\psfrag{dr}[][][1]{$\Delta_{k_r}$}
\psfrag{dt}[][][1]{$\cdots$}
%\psfrag{a}[][]{$f^{q_{n}-q_{n-1}}(c_{0})$} 
%\psfrag{b}[][]{$f^{q_{n}}(c_{0})$} 
\psfrag{Gi}[][]{$G_i$} 
\psfrag{Gr}[][]{$G_r$}
\includegraphics[width=4.5in]{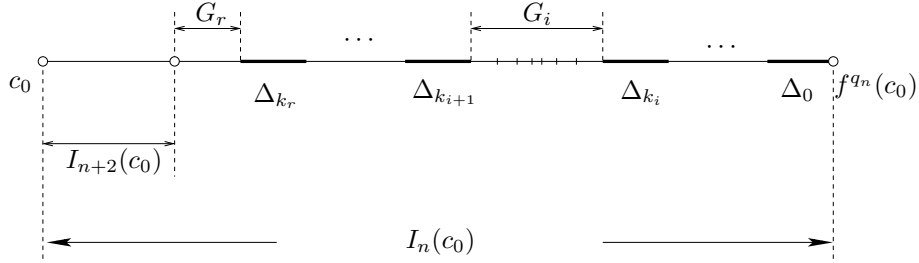}
\end{center}
\caption[slope]{\label{bridgespots} Bridges and critical spots.}
\end{figure}

\begin{lemma}\label{largebridge}
 Each non-empty bridge $G_i$ is comparable to $I_n(c_0)$.
\end{lemma}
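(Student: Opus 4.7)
The upper bound $|G_i| \leq |I_n(c_0)|$ is immediate from the inclusion $G_i \subseteq I_n(c_0)$, so the only real content is the lower bound $|G_i| \gtrsim |I_n(c_0)|$.

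My plan is to exhibit a single atom of $\mathcal{P}_{n+1}(c_0)$ sitting inside $G_i$ which is already comparable to $I_n(c_0)$, and to obtain this by chaining the real bounds with Lemma \ref{criticlarge}. Since $G_i$ is non-empty, the definition forces $k_i+1\leq k_{i+1}-1$ (respectively $k_r+1\leq a_{n+1}-1$ when $i=r$), so in particular $\Delta_{k_i+1}\subseteq G_i$. The atoms $\Delta_{k_i}$ and $\Delta_{k_i+1}$ are adjacent atoms of the dynamical partition $\mathcal{P}_{n+1}(c_0)$, and hence the real bounds (Theorem \ref{realbounds}) give $|\Delta_{k_i+1}|\asymp |\Delta_{k_i}|$.

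It therefore suffices to know that the left endpoint spot $\Delta_{k_i}$ is itself comparable to $I_n(c_0)$. For $i\geq 1$ this is precisely the content of Lemma \ref{criticlarge}, since $\Delta_{k_i}$ is by definition a critical spot. For $i=0$ one has to be careful because of the convention $k_0=0$: if $\Delta_0$ happens to already contain a critical point of $f^{q_{n+1}}$ then $k_1=0$, $G_0=\emptyset$, and there is nothing to prove; otherwise $\Delta_{k_0}=\Delta_0=f^{q_n}(I_{n+1}(c_0))=I_{n+1}^{q_n}(c_0)$, and Proposition \ref{lemma4} (specifically the comparability of $I_{n+1}^{q_n}$ with $I_n$ established in step (v)) gives $|\Delta_0|\asymp |I_n(c_0)|$. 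Chaining the comparabilities yields
\[
|G_i|\;\geq\;|\Delta_{k_i+1}|\;\asymp\;|\Delta_{k_i}|\;\asymp\;|I_n(c_0)|,
\]
which is the required lower bound.

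There is essentially no technical obstacle left at this stage; the heavy lifting was already done in Lemma \ref{criticlarge}, whose proof invoked a delicate cross-ratio argument together with the real bounds. The present lemma is by contrast a bookkeeping consequence, and the only mild subtlety is the handling of the boundary case $i=0$ dictated by the convention $k_0=0$.
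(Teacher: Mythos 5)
Your proof is correct and follows essentially the same route as the paper's: extract the atom $\Delta_{k_i+1}\subseteq G_i$, compare it to the adjacent atom $\Delta_{k_i}$ via the real bounds, and then to $I_n(c_0)$ via Lemma~\ref{criticlarge}. The extra care you take with the convention $k_0=0$ is a sensible clarification, and is consistent with the paper, since the proof of Lemma~\ref{criticlarge} disposes of the case $k=0$ by the real bounds regardless of whether $\Delta_0$ is a genuine critical spot.
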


\begin{proof}
If $G_i\neq \O$, then $G_i$ contains at the very least the atom $\Delta_{k_i+1}$, 
adjacent to $\Delta_{k_i}$, and so we have $|G_i|\geq |\Delta_{k_i+1}|\asymp |\Delta_{k_i}|$, by the real bounds. 
By Lemma \ref{criticlarge}, we have $|\Delta_{k_i}|\asymp |I_n(c_0)|$.
Since we also have $G_i\subset I_n(c_0)$, it follows that $|G_i|\asymp |I_n(c_0)|$.  
\end{proof}

Thus, we have the following decomposition of $I_n(c_0)\setminus I_{n+2}(c_0)$ as union of at most $2r+2\leq 4N+2$ intervals:
\begin{equation}\label{decomp}
 I_n(c_0)\setminus I_{n+2}(c_0)\;=\; \bigcup_{i=0}^{r}\Delta_{k_i} \cup \bigcup_{i=0}^{r}G_i \ .
\end{equation}
In view of Lemmas \ref{criticlarge} and \ref{largebridge}, as well as the real bounds, each interval 
in the above decomposition is comparable to $I_n(c_0)$. In particular, they are all pairwise comparable. 

\begin{remark}\label{critspotremark}
Note that the image of each critical spot $\Delta_{k_i}$ under $f^{q_{n+1}}$ is also comparable to $I_n(c_0)$: this 
is simply because $f^{q_{n+1}}(\Delta_{k_i})=\Delta_{k_i+1}$ is adjacent to $\Delta_{k_i}$ in $\mathcal{P}_{n+1}(c_0)$. 
Likewise, the image of each bridge $G_i$ under $f^{q_{n+1}}$ is also comparable to $I_n(c_0)$, because either $i<r$ and 
$f^{q_{n+1}}(G_i)$ contains the critical spot $\Delta_{k_{i+1}}$, or $i=r$, in which case $f^{q_{n+1}}(G_r)$ contains $I_{n+2}(c_0)$. 
\end{remark}

Let us now map the decomposition \eqref{decomp} forward by $f$ to get corresponding decompositions 
of all long atoms $I_n^j(c_0)\in \mathcal{P}_n(c_0)$, for $j=1,2,\ldots,q_{n+1}-1$. 
We get in this fashion a new partition $\mathcal{P}_{n}^*(c_0)$ of the circle (modulo endpoints). 
More precisely, let
\begin{align}\label{pstar}
 \mathcal{P}_{n}^*(c_0) \;=\;& \,\left\{f^j(\Delta_{k_i}): 0\leq i\leq r\;;\; 0\leq j\leq q_{n+1}-1\right\} \\
                        &\cup \left\{f^j(G_i): 0\leq i\leq r\;;\; 0\leq j\leq q_{n+1}-1 \right\} \nonumber \\
                        &\cup \left\{f^j(I_{n+2}): 0\leq j\leq q_{n+1}-1 \right\} \nonumber \\
                        &\cup \left\{f^{\ell}(I_{n+1})\,:\; 0\leq \ell\leq q_{n}-1        \right\} \ .\nonumber
\end{align}
This partition 
refines $\mathcal{P}_{n}(c_0)$, although not strictly because each short atom of $\mathcal{P}_{n}(c_0)$ is left untouched 
by the above procedure. Generalizing the nomenclature introduced earlier, all atoms of $\mathcal{P}_{n}^*(c_0)$ of 
the form $f^j(\Delta_{k_i})$ are called {\it critical spots\/}, and all those of the form $f^j(G_i)$ are called 
{\it bridges\/}. 

\begin{proposition}\label{consecatoms}
 Any two consecutive atoms of $\mathcal{P}_{n}^*(c_0)$ are comparable.
\end{proposition}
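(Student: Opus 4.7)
The plan is to show that every atom of $\mathcal{P}_n^*(c_0)$ is comparable in length to the atom of $\mathcal{P}_n(c_0)$ containing it; once this has been established, Theorem~\ref{realbounds} handles the rest of the combinatorics. Concretely, two consecutive atoms $A,B\in \mathcal{P}_n^*(c_0)$ either lie inside a common long atom $I_n^j(c_0)\in \mathcal{P}_n(c_0)$, or they straddle the boundary between two adjacent atoms of $\mathcal{P}_n(c_0)$ (either of which may be long or short). In the first case, $|A|\asymp|B|$ is immediate from each being comparable to $|I_n^j(c_0)|$; in the second case each of $A,B$ is comparable to its parent atom in $\mathcal{P}_n(c_0)$, and since adjacent atoms of $\mathcal{P}_n(c_0)$ are comparable by the real bounds, we again get $|A|\asymp|B|$.

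The heart of the proof is therefore the main claim that for every $0\leq j\leq q_{n+1}-1$ and every $X\in\mathcal{P}_n^*(c_0)$ contained in $I_n^j(c_0)$, one has $|X|\asymp |I_n^j(c_0)|$. For $j=0$ this is exactly Lemmas~\ref{criticlarge} and~\ref{largebridge}, together with a short chain of applications of Theorem~\ref{realbounds} to control $|I_{n+2}(c_0)|$. For $j\neq 0$ I would mimic the proof of Lemma~\ref{criticlarge}. Given a critical spot $X=f^j(\Delta_{k_i})$, Lemma~\ref{intersectcomp} (applied at levels $n$ and $n+1$) yields $|f^{\ell^{*}}(X)|\asymp|I_n^{j+\ell^{*}}(c_0)|$ at the iterate $\ell^{*}$ for which $f^{\ell^{*}}(X)$ contains a critical point of $f$. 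One then considers the interval $T^{*}=X\cup J^{*}\cup f^j(I_{n+1}^{q_n})$ inside $I_n^j(c_0)$, with $J^{*}$ the gap between $X$ and $f^j(I_{n+1}^{q_n})$, and applies the Cross-Ratio Inequality to $f^{\ell^{*}}$ on the pair $(J^{*},T^{*})$. Because the iterates $\{f^i(T^{*})\}_{i=0}^{\ell^{*}-1}$ lie inside pairwise disjoint long atoms of $\mathcal{P}_n(c_0)$, their intersection multiplicity is universally bounded, so the Cross-Ratio Inequality produces
$$b(J^{*},T^{*})\;\gtrsim\; b\bigl(f^{\ell^{*}}(J^{*}),f^{\ell^{*}}(T^{*})\bigr)\;\asymp\; 1.$$
Combining this lower bound on $b(J^{*},T^{*})$ with the real bounds on $\mathcal{P}_{n+1}(c_0)$ (which control $|J^{*}|$ and $|f^j(I_{n+1}^{q_n})|$) then forces $|X|\gtrsim|I_n^j(c_0)|$. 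Bridges $f^j(G_i)$ and the endpiece $f^j(I_{n+2})$ are handled analogously: each is bounded below by an adjacent critical spot via the real bounds on $\mathcal{P}_{n+1}(c_0)$, and its upper bound follows from its inclusion in $I_n^j(c_0)$ together with the same cross-ratio setup.

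The main technical obstacle will be the lower bound $|X|\gtrsim|I_n^j(c_0)|$ for $j\neq 0$. The Cross-Ratio Inequality only yields upper bounds on the middle interval of a $b$-cross-ratio configuration, so the configuration $T^{*}$ must be set up carefully, ensuring that both flanking components are independently known to be comparable to $|I_n^j(c_0)|$. In particular, verifying the comparability of the reference atom $f^j(I_{n+1}^{q_n})$ with $|I_n^j(c_0)|$ is the shifted analog of the statement $|I_{n+1}^{q_n}|\asymp|I_n(c_0)|$ from Proposition~\ref{lemma4}, and keeping the implicit constants universal (independent of $j$ and $n$) is the delicate point of the argument.
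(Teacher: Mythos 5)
Your reduction---show that each critical spot and bridge of $\mathcal{P}_n^*(c_0)$ inside $I_n^j(c_0)$ is comparable to $I_n^j(c_0)$, treat $j=0$ via Lemmas~\ref{criticlarge} and~\ref{largebridge}, and propagate by cross-ratio for $j\neq 0$---is exactly the paper's strategy, and the triple $T^*=X\cup J^*\cup f^j(I_{n+1}^{q_n})$ is the right cross-ratio configuration. The genuine gap is the choice of the iterate by which you push $T^*$ forward. You take $\ell^*$ to be ``the iterate for which $f^{\ell^*}(X)$ contains a critical point of $f$,'' so that Lemma~\ref{intersectcomp} supplies commensurability at the far end. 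But such an $\ell^*\in[0,q_{n+1}-j)$ need not exist. The spot $\Delta_{k_i}\subset I_n(c_0)$ is critical because $f^\ell(\Delta_{k_i})$ contains a critical point of $f$ for certain $\ell\in[0,q_{n+1})$; once $j$ exceeds the largest such $\ell$, the forward orbit of $X=f^j(\Delta_{k_i})$ never meets a critical point of $f$ before it returns to $I_n(c_0)\cup I_{n+1}(c_0)$, and Lemma~\ref{intersectcomp} has nothing to latch onto.

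The paper sidesteps this by always iterating forward by the full return $f^{q_{n+1}-j}$, landing in $I_n^{q_{n+1}}(c_0)\subset I_n(c_0)\cup I_{n+1}(c_0)$, and invoking Remark~\ref{critspotremark} rather than Lemma~\ref{intersectcomp}. That remark records exactly what is needed at the far end: $f^{q_{n+1}}(\Delta_{k_i})=\Delta_{k_i+1}$ is adjacent to $\Delta_{k_i}$ in $\mathcal{P}_{n+1}(c_0)$, hence still comparable to $I_n(c_0)$, and the image of each bridge is comparable to $I_n(c_0)$ because it contains the next critical spot or $I_{n+2}(c_0)$. Thus both flanking pieces of $f^{q_{n+1}-j}(T^*)$ are commensurate with $I_n(c_0)$, and your cross-ratio estimate closes as intended. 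Replacing your $\ell^*$ by $q_{n+1}-j$ and Lemma~\ref{intersectcomp} by Remark~\ref{critspotremark} fixes the argument and makes it coincide with the paper's; in particular the ``delicate point'' you flag, that $|I_{n+1}^{q_n+j}(c_0)|\asymp|I_n^j(c_0)|$ uniformly in $j$, is handled by the same mechanism (it maps under $f^{q_{n+1}-j}$ to $|\Delta_1|\asymp|I_n(c_0)|$), so no shifted version of Proposition~\ref{lemma4} is required.
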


\begin{proof}
By the real bounds (Theorem \ref{realbounds}), the partition $\mathcal{P}_{n}(c_0)$ has the stated comparability property. 
Hence it suffices to check that all bridges and critical spots of $\mathcal{P}_{n}^*(c_0)$ 
inside each long atom $I_n^j(c_0)\in \mathcal{P}_{n}(c_0)$ 
are comparable to $I_n^j(c_0)$. We already know this for $j=0$ (see Lemma \ref{largebridge} and the paragraph following its proof). 
For the other values of $j$, map $I_n^j(c_0)$ forward by $f^{q_{n+1}-j}$ onto $I_n^{q_{n+1}}(c_0)\subset I_n(c_0)\cup I_{n+1}(c_0)$ 
and apply the Cross-Ratio Inequality, combined with Remark \ref{critspotremark}.

\end{proof}

\subsection{Almost parabolic maps} In order to construct the fine grid in section \ref{secmainthm} below, 
we not only will need to use whole atoms from the various partitions $\mathcal{P}_{n}^*(c_0)$, but also 
will have to break some of these atoms even further, in a suitable way. In this subsection and the next, we show how to 
break such atoms as required. 

First we need to recall the definition of an almost parabolic map, as given in \cite[\S 4.1]{dFdM}.  

\begin{definition}
 An \emph{almost parabolic map} is a $C^3$ diffeomorphism 
\[
 \phi:\, J_1\cup J_2\cup \cdots \cup J_\ell \;\to\; J_2\cup J_3\cup \cdots \cup J_{\ell+1} \ ,
\]
where $J_1,J_2, \ldots, J_{\ell+1}$ are consecutive intervals on the circle (or on the line), with the 
following properties. 
\begin{enumerate}
 \item[(i)] One has $\phi(J_\nu)= J_{\nu+1}$ for all $1\leq \nu\leq \ell$;
 \item[(ii)] The Schwarzian derivative of $\phi$ is everywhere negative.
\end{enumerate}
The positive integer $\ell$ is called the \emph{length} of $\phi$, and the positive real number 
\[
 \sigma =\min\left\{\frac{|J_1|}{|\cup_{\nu=1}^\ell J_\nu|}\,,\, \frac{|J_\ell|}{|\cup_{\nu=1}^\ell J_\nu|}     \right\}
\]
is called the \emph{width\/} of $\phi$.
\end{definition}

Note that the dynamics of an almost parabolic $\phi$ is, rather trivially, conjugate to a translation, and  
each interval $J_\nu$ is a {\it fundamental domain\/} for $\phi$ (modulo endpoints). Now, the crucial fact about the geometry 
of the fundamental domains of an almost parabolic map is the following lemma, due to Yoccoz.

\begin{lemma}[Yoccoz]\label{lemyoccoz}
 Let $\phi: \bigcup_{\nu=1}^\ell J_\nu \to \bigcup_{\nu=2}^{\ell+1} J_\nu$ be an almost parabolic map with length $\ell$ and 
width $\sigma$. There exists a constant $C_\sigma>1$ (depending on $\sigma$ but not on $\ell$) such that, 
for all $\nu=1,2,\ldots,\ell$, we have
\begin{equation}\label{yocineq}
 \frac{C_\sigma^{-1}|I|}{[\min\{\nu,\ell+1-\nu\}]^2} \;\leq\; |J_\nu| \;\leq\;  \frac{C_\sigma|I|}{[\min\{\nu,\ell+1-\nu\}]^2}\ ,
\end{equation}
where $I=\bigcup_{\nu=1}^\ell J_\nu$ is the domain of $\phi$. 
\end{lemma}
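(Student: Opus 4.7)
The plan is to exploit the fact that $\phi$ behaves like a map with two repelling ``parabolic'' fixed points, one just outside each end of $\tilde I = J_1\cup\cdots\cup J_{\ell+1}$, and to pass to a Möbius coordinate in which the orbit $\{x_\nu\}$ becomes nearly equally spaced. The target asymptotics $|J_\nu|\asymp |I|/\nu^2$ is then the pullback, through the Möbius change of variables, of a uniform step-size orbit; the width $\sigma$ guarantees that the extremal fundamental domains have definite size, anchoring the estimate.

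Concretely, write $x_0 < x_1 < \cdots < x_{\ell+1}$ for the endpoints of the $J_\nu$ (so that $\phi(x_\nu) = x_{\nu+1}$), and pass to
\[
\psi(x) \;=\; \frac{x - x_0}{x_{\ell+1} - x}, \qquad y_\nu \;=\; \psi(x_\nu).
\]
A routine computation gives the identity
\[
|J_\nu| \;=\; \frac{(x_{\ell+1}-x_0)\,(y_\nu - y_{\nu-1})}{(1+y_{\nu-1})(1+y_\nu)} ,
\]
which reduces \eqref{yocineq} to showing that the increments $\Delta_\nu := y_\nu - y_{\nu-1}$ are uniformly comparable to a positive constant depending only on $\sigma$, across all $1 \le \nu \le \ell$. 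Indeed, if $\Delta_\nu \asymp 1$, then $y_\nu \asymp \nu$ for $\nu \le \ell/2$, and the identity yields $|J_\nu| \asymp |\tilde I|/\nu^2 \asymp |I|/\nu^2$ in this range (since $|\tilde I|$ is comparable to $|I|$ thanks to $\sigma$); the range $\nu \ge (\ell+1)/2$ is handled by the symmetric Möbius coordinate $\psi^\ast(x) = (x_{\ell+1}-x)/(x-x_0)$ together with the involution $\phi\leftrightarrow \phi^{-1}$.

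To prove the uniform comparability of the $\Delta_\nu$, I would use that the Schwarzian derivative is Möbius-invariant, so $\tilde\phi = \psi\circ\phi\circ\psi^{-1}$ also has negative Schwarzian and therefore weakly contracts the $b$-cross-ratio, by the content of Section~\ref{sec:distort}. Applying this contraction one step at a time to the triple of consecutive orbit points $y_{\nu-1},y_\nu,y_{\nu+1}$ yields a monotonicity statement for the sequence of normalized cross-ratios built from the $\Delta_\nu$. This monotonicity, combined with two-sided bounds at the boundaries (where the width hypothesis pins down $\Delta_1$ and $\Delta_\ell$ up to constants depending only on $\sigma$), pinches the entire sequence of increments between two positive constants depending only on $\sigma$.

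The main obstacle is preventing the accumulation of distortion across the $\ell$ steps: a naive application of the Cross-Ratio Inequality to $\phi^{\ell-1}$ would yield a multiplicative error of the form $B^{\ell-1}$, which is catastrophic since $\ell$ is unbounded. What rescues the argument is precisely that negative Schwarzian gives cross-ratio \emph{contraction}, not merely bounded distortion, so that information can be propagated one step at a time without accumulation. This is the only place in the paper where the full strength of the $C^3$-and-negative-Schwarzian hypothesis is used essentially, as already flagged in the introduction.
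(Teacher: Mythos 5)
The paper itself does not prove this lemma; it refers the reader to \cite[Appendix~B]{dFdM}. Your plan nevertheless has the right flavour and matches the spirit of that proof: pass to a projective coordinate in which the orbit should become approximately equally spaced, and use negative Schwarzian (rather than mere bounded distortion) to propagate information across the unboundedly many fundamental domains. The Möbius identity $|J_\nu| = (x_{\ell+1}-x_0)\Delta_\nu/\bigl[(1+y_{\nu-1})(1+y_\nu)\bigr]$ is correct, and the observation that the $B^{\ell-1}$ catastrophe from the Cross-Ratio Inequality must be avoided by using genuine \emph{contraction} is exactly the right heuristic.

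However, the core step — establishing $\Delta_\nu\asymp 1$ — is where the proof actually lives, and the sketch does not supply it. Two concrete problems. First, the coordinate $\psi(x)=(x-x_0)/(x_{\ell+1}-x)$ depends on $x_{\ell+1}$, i.e.\ on $|J_{\ell+1}|$, which is \emph{not} controlled by the hypotheses: the width $\sigma$ only constrains $|J_1|,|J_\ell|$ relative to $I=J_1\cup\cdots\cup J_\ell$, and the conclusion only concerns $\nu\le\ell$. If $|J_{\ell+1}|$ is very large, $\Delta_1=y_1=|J_1|/\bigl(|I|-|J_1|+|J_{\ell+1}|\bigr)$ can be arbitrarily small, so even the boundary anchoring $\Delta_1\asymp 1$ fails in the coordinate you chose. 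Second, and more fundamentally, ``apply the contraction one step at a time to consecutive triples $y_{\nu-1},y_\nu,y_{\nu+1}$, get a monotonicity statement, and pinch from the boundaries'' does not obviously close. Translating the one-step Schwarz–Pick contraction into increments gives precisely that the normalized cross-ratio
\[
\beta_\nu \;=\; \frac{\Delta_\nu\,\Delta_{\nu+2}}{(\Delta_\nu+\Delta_{\nu+1})(\Delta_{\nu+1}+\Delta_{\nu+2})}
\]
is non-increasing in $\nu$. This is a one-sided statement; being sandwiched between $\beta_1\le 1$ and $\beta_{\ell-2}$ does not by itself force the ratios $\Delta_{\nu+1}/\Delta_\nu$ to stay bounded, let alone force $\Delta_\nu\asymp 1$ — in fact $\beta_\nu$ bounded is compatible with $\Delta_\nu$ drifting geometrically. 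The missing ingredient (a comparison of $\tilde\phi$ with an explicit parabolic/Möbius model via the Schwarzian, or equivalently the min-principle for $\tilde\phi'$ together with a genuinely quantitative anchoring at both ends after handling the $J_{\ell+1}$ normalization) is exactly what the reference \cite[Appendix~B]{dFdM} supplies, and it is not a one- or two-line fix. As written, the proposal identifies the correct reduction but leaves the heart of the lemma unproved.
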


A proof of this lemma can be found in \cite[Appendix B]{dFdM}. Defining the {\it order\/} of a fundamental domain $J_\nu$ 
as above to be $\mathrm{ord}(J_\nu)=\min\{\nu,\ell+1-\nu\}$, we can rephrase the conclusion of Lemma \ref{lemyoccoz} 
as follows: for all $\nu=1,2,\ldots,\ell$, we have $|J_\nu|\asymp (\mathrm{ord}(J_\nu))^{-2}|I|$ with comparability 
constant depending only on $\sigma$. In other words, the relative size of a fundamental domain in an almost parabolic map 
is inversely proportional to the square of its order. 

The following lemma exhibits a special way of grouping together the fundamental domains of an almost parabolic map. 

\begin{lemma}\label{balancedecomp}
 Let $\phi$ be an almost parabolic map with domain $I=\bigcup_{\nu=1}^\ell J_\nu$, and let $d\in \mathbb{N}$ be largest 
such that $2^{d+1}\leq \ell/2$. There exists a descending chain of (closed) intervals (see Figure \ref{balancedom}) 
\[
 I=M_0\supset M_1\supset \cdots \supset M_{d+1}
\]
for which, letting $L_i,R_i$ denote the (left and right) connected components of $M_i\setminus M_{i+1}$ for all 
$0\leq i\leq d$, the following properties hold.
\begin{enumerate}
 \item[(i)] Each of the intervals $L_i,R_i$ is the union of exactly $2^i$ consecutive atoms (fundamental domains) 
of $I$.
 \item[(ii)] We have
\begin{equation}\label{balance1}
 I\;=\; \bigcup_{i=0}^{d}L_i\;\cup\;M_{d+1}\;\cup\; \bigcup_{i=0}^{d}R_i \ . 
\end{equation}
 \item[(iii)] For each $0\leq i\leq d$ we have $|L_i|\asymp |M_{i+1}|\asymp |R_i|$, with comparability constants 
depending only on the width $\sigma$ of $\phi$. 
\end{enumerate}
\end{lemma}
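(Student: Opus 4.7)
The natural approach is a dyadic peeling from both endpoints of $I$, with sizes controlled by Yoccoz's Lemma. Concretely, I would set $L_i=J_{2^i}\cup J_{2^i+1}\cup\cdots\cup J_{2^{i+1}-1}$ (so $L_0=J_1$, $L_1=J_2\cup J_3$, $L_2=J_4\cup\cdots\cup J_7$, and so on), and symmetrically $R_i=J_{\ell-2^{i+1}+2}\cup\cdots\cup J_{\ell-2^i+1}$; then $M_{i+1}:=M_i\setminus(L_i\cup R_i)$ is the contiguous central block $J_{2^{i+1}}\cup\cdots\cup J_{\ell-2^{i+1}+1}$. The choice of $d$ as the largest integer with $2^{d+1}\leq\ell/2$ is there precisely to guarantee that all of the pieces $L_0,\ldots,L_d,R_0,\ldots,R_d$ fit disjointly inside $I$ without the left and right shells crossing the middle, and that $M_{d+1}$ is nonempty.

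With this explicit definition, properties (i) and (ii) hold by direct inspection, since $L_i$ and $R_i$ are by construction unions of exactly $2^i$ consecutive fundamental domains, and the $L_i$'s, $R_i$'s and $M_{d+1}$ tile $I$ modulo endpoints.

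The real content is property (iii), which I would read off from Yoccoz's Lemma. Every $J_\nu$ appearing in $L_i$ satisfies $\mathrm{ord}(J_\nu)=\nu\asymp 2^i$, so Lemma \ref{lemyoccoz} gives
\[
 |L_i|\;\asymp\;|I|\sum_{\nu=2^i}^{2^{i+1}-1}\frac{1}{\nu^2}\;\asymp\;\frac{|I|}{2^i}\ ,
\]
and symmetrically $|R_i|\asymp|I|/2^i$. For $M_{i+1}$ with $i<d$, the same reasoning applied to the telescoping tail $\sum_{\nu\geq 2^{i+1}}\nu^{-2}\asymp 2^{-(i+1)}$ combined with the symmetric contribution from the right half yields $|M_{i+1}|\asymp|I|/2^{i+1}\asymp|I|/2^i$, so all three quantities are comparable in this range. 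The step I expect to be the main obstacle is the terminal one $i=d$: here $M_{d+1}$ is the innermost remainder and its total length must still match $|L_d|\asymp|I|/2^d$, even though $M_{d+1}$ may consist of only a handful of central atoms. The maximality of $d$ is engineered precisely so that these atoms have order $\asymp 2^{d+1}$ and their count forces the total on the correct scale, and verifying this carefully is the delicate point of the proof. All comparability constants emerge as explicit functions of the $C_\sigma$ of Lemma \ref{lemyoccoz}, and therefore depend only on $\sigma$.
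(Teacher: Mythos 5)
Your construction is exactly the paper's: the same dyadic shells $L_i=\bigcup_{\nu=2^i}^{2^{i+1}-1}J_\nu$, $R_i$ by reflection, $M_i=\bigcup_{\nu=2^i}^{\ell+1-2^i}J_\nu$, the same verification of (i) and (ii) by inspection, and the same invocation of Yoccoz's Lemma \ref{lemyoccoz} to read off $|L_i|\asymp|R_i|\asymp 2^{-i}|I|$ and $|M_{i+1}|\asymp 2^{-i}|I|$. So far you are tracking the paper's proof step for step.

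Your instinct that the terminal step $i=d$ is the soft spot is correct, but the appeal to ``the maximality of $d$'' does not, as you leave it, close that gap --- and you should be aware that the paper's own argument is also shaky exactly here. With $d$ the largest integer for which $2^{d+1}\leq\ell/2$, one only has $2^{d+2}\leq\ell<2^{d+3}$. At the extreme $\ell=2^{d+2}$, the central block $M_{d+1}$ reduces to precisely two atoms $J_{2^{d+1}}$ and $J_{2^{d+1}+1}$, both of order $2^{d+1}$, so Yoccoz gives $|M_{d+1}|\asymp 2|I|/4^{d+1}$; meanwhile $|L_d|\asymp|I|\sum_{\nu=2^d}^{2^{d+1}-1}\nu^{-2}\asymp|I|/2^{d+1}$. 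The ratio $|L_d|/|M_{d+1}|\asymp 2^d$ is unbounded as $\ell\to\infty$, so the comparability at $i=d$ fails with the stated choice of $d$. The paper's sandwich \eqref{balance5} has the same blind spot: its lower bound $\sum_{\nu=2^{i+1}}^{2^{i+2}-1}\nu^{-2}\leq A$ requires $\ell/2\geq 2^{i+2}-1$, which need not hold when $i=d$. The clean repair, which you should make explicit rather than gesturing at maximality, is to take $d$ largest with $2^{d+2}\leq\ell/2$ (equivalently $\ell\geq 2^{d+3}$). Then the two-sided tail
\[
 |M_{d+1}|\;\asymp\; 2|I|\sum_{\nu=2^{d+1}}^{\lceil(\ell+1)/2\rceil}\frac{1}{\nu^2}\;\asymp\;2|I|\left(\frac{1}{2^{d+1}}-\frac{2}{\ell+1}\right)
\]
is genuinely $\asymp|I|/2^{d+1}\asymp|L_d|\asymp|R_d|$, since $2/(\ell+1)\leq 1/2^{d+2}$. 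Everything downstream (Corollary \ref{balancedbridges}, Proposition \ref{gridpprop}) is insensitive to stopping the peeling one level earlier, so this adjustment costs nothing.
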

%%%%%%%%%%%%%%%%%
\begin{figure}[ht]
\begin{center}
\psfrag{l0}[][]{ $L_0$} 
\psfrag{l1}[][][1]{$L_1$}
\psfrag{r0}[][][1]{$R_0$} 
\psfrag{r1}[][][1]{$R_1$}
\psfrag{m1}[][][1]{$M_1$}
\psfrag{m2}[][][1]{$M_2$}
\psfrag{md}[][][1]{$M_d$}
\psfrag{md1}[][][1]{$M_{d+1}$}
%\psfrag{a}[][]{$f^{q_{n}-q_{n-1}}(c_{0})$} 
%\psfrag{b}[][]{$f^{q_{n}}(c_{0})$} 
\includegraphics[width=4.0in]{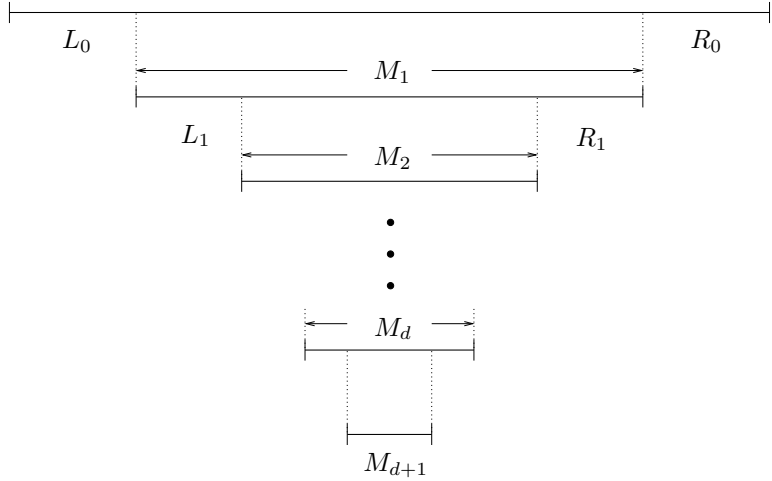}
\end{center}
\caption[balancedom]{\label{balancedom} Balanced decomposition of the domain of an almost parabolic map.}
\end{figure}
%%%%%%%%%%%%%%%%%%
\begin{proof}
 We define, for each $0\leq i\leq d$,
\[
 L_i\;=\;\bigcup_{\nu=2^i}^{2^{i+1}-1}J_\nu \ \ ;\ \ R_i\;=\;\bigcup_{\nu=\ell+2-2^{i+1}}^{\ell+1-2^{i}}J_\nu \ .
\]
Also, for each $0\leq i\leq d+1$, we let
\[
 M_i\;=\;\bigcup_{\nu=2^i}^{\ell+1-2^{i}}J_\nu
\]
Then we immediately have (i) and (ii). Hence all we have to do is prove (iii). Let us fix $0\leq i\leq d$. 
In all that follows, the implicit comparability constants are either universal or depend on the constant $C_\sigma$ of 
Yoccoz's Lemma \ref{lemyoccoz}. Applying that lemma, we see that
\begin{equation}\label{balance2}
 |L_i|\;=\;\sum_{\nu=2^i}^{2^{i+1}-1} |J_\nu| \asymp \left(\sum_{\nu=2^i}^{2^{i+1}-1} \frac{1}{\nu^2}\right)|I| \asymp 2^{-i}|I|\ .
\end{equation}
Similarly, we have
\begin{equation}\label{balance3}
 |R_i|\asymp 2^{-i}|I|\ .
\end{equation}
Moreover, we can write
\begin{equation}\label{balance4}
 |M_{i+1}|\;=\;\sum_{\nu=2^{i+1}}^{2^{i+2}-1} |J_\nu| \asymp 2\left( \sum_{2^{i+1}\leq \nu\leq \frac{\ell}{2}} \frac{1}{\nu^2}\right)|I| = 2A|I|\ ,
\end{equation}
where the number $A$ satisfies
\begin{equation}\label{balance5}
 \sum_{\nu=2^{i+1}}^{2^{i+2}-1} \frac{1}{\nu^2} \leq A \leq \sum_{\nu=2^{i+1}}^{\infty} \frac{1}{\nu^2}\ .
\end{equation}
Both sums appearing in \eqref{balance5} are comparable to $2^{-i-1}$ (use the integral test).
Hence \eqref{balance4} and \eqref{balance5} put together yield
\begin{equation}\label{balance6}
 |M_{i+1}|\asymp 2^{-i}|I|\ .
\end{equation}
Combining \eqref{balance2}, \eqref{balance3} and \eqref{balance6}, we see that (iii) holds true as well, and the proof is complete. 
\end{proof}

\begin{remark}
Given an interval $I$ partitioned into atoms $J_\nu$, $1\leq \nu\leq \ell$, as above, a decomposition 
of the form \eqref{balance1} satisfying properties (i), (ii), (iii) of Lemma \ref{balancedecomp} is called 
a \emph{balanced decomposition\/} of $I$ (relative to its given partition into atoms). Thus, Lemma \ref{balancedecomp} can be re-stated as saying that the domain 
of an almost parabolic map always admits a balanced decomposition. In such balanced decomposition, the intervals 
$M_i$, $0\leq i\leq d+1$, are said to be \emph{central\/}, whereas the intervals $L_i,R_i$, $0\leq i\leq d$, are 
said to be \emph{lateral\/}. The positive integer $d$ is the \emph{depth\/} of the decomposition. 
\end{remark}

\begin{remark}\label{ratioofJs}
 The following fact, more general than what was used in the proof of Lemma \ref{balancedecomp}, holds true 
for the fundamental domains $J_\nu$ ($1\leq \nu\leq \ell$) of any almost parabolic map $\phi$: For all 
$1\leq k<l<m\leq \ell$, one has
\[
 \frac{|J_{l+1}|+|J_{l+2}|+\cdots+|J_m|}{|J_{k+1}|+|J_{k+2}|+\cdots+|J_l|}\;\asymp\; \frac{k(m-l)}{m(l-k)}\ ,
\]
with comparability constant depending only on the width $\sigma$ of $\phi$ {\footnote{In fact, the comparability constant 
can be taken to be equal to (a universal constant times) $C_\sigma^2$, where $C_\sigma$ is the constant in Lemma \ref{lemyoccoz}.}} 
Again, this follows from Yoccoz's Lemma \ref{lemyoccoz}. This fact will be useful in \S \ref{sec:finegrid}. 
\end{remark}

\begin{remark}\label{balremark}
 Let $I, I^*$ be two closed intervals with $I^*$ contained in the interior of $I$. Let $I^*$ be partitioned into a finite number $\ell$ 
of atoms, consecutively labelled $J_\nu$, $1\leq \nu\leq \ell$ as before, and suppose such atoms satisfy the inequalities \eqref{yocineq} 
(for some choice of the constant $C_\sigma$) 
-- so that we have a balanced decomposition of $I^*$ (as in Lemma \ref{balancedecomp}). Then, adding both lateral components of 
$I\setminus I^*$ to the collection of $J_\nu$'s and re-labelling these $\ell+2$ 
atoms from first to last, one sees that the inequalities \eqref{yocineq} hold true for the new collection also 
(with a different comparability constant, in general)   
Thus, we get a balanced decomposition of $I$ as well. This remark will be used in the proof of Corollary \ref{balancedbridges}. 
\end{remark}

%Let us now return to the partitions $\mathcal{P}_{n}^*(c_0)$ of our map $f$.  
\subsection{Balanced decompositions of bridges}
We distinguish two types of atoms belonging to the partition $\mathcal{P}_{n}^*(c_0)$:
\begin{enumerate}
 \item[(a)] {\it Regular atoms\/}: These consist of all short atoms of $\mathcal{P}_{n}(c_0)$, all of 
which belong to $\mathcal{P}_{n}^*(c_0)$, all intervals of the form $f^j(I_{n+2})$ (with $0\leq j\leq q_{n+1}-1$), all critical spots 
$f^j(\Delta_{k_i})$ (with $0\leq i\leq r$, $0\leq j\leq q_{n+1}-1$), together with all those 
bridges $G_{i,j}=f^j(G_i)$ that have less than $1,000$ atoms of $\mathcal{P}_{n+1}(c_0)$ in it ({\it i.e.}, those with
$k_{i+1}-k_i\leq 1,000$). 
 \item[(b)] {\it Saddle-node atoms\/}: These are the remaining bridges; to wit, those $G_{i,j}$ whose decomposition 
as a union of atoms of $\mathcal{P}_{n+1}(c_0)$ has at least $1,000$ such atoms in it ({\it i.e.\/}, those 
with $k_{i+1}-k_i > 1,000$). 
\end{enumerate} 

Proceeding by analogy with \cite[\S 4.3]{dFdM}, we will show, with the help of Yoccoz's Lemma \ref{lemyoccoz}, 
how to get  a {\it balanced decomposition\/} of a saddle-node bridge. 
%First we note the following result, whose proof is given in Appendix \ref{append}. 
Before we proceed, however, we make the following simplyfying assumption. 
%We make the following assumption, which is legitimate since the main theorem we want to prove is 
Conjugating our multicritical circle map $f$ by a suitable $C^3$ diffeomorphism, we may 
assume without loss of generality that the map $f$  is \textit{canonical}, in the sense that each critical point $c_{k}$ has a 
neighborhood $\mathcal{U}_{k} \subseteq S^{1}$  
such that for all $x \in \mathcal{U}_{k}$
 \begin{equation*}
  f(x)=f(c_k)+ (x-c_{k})|x-c_{k}|^{s_{k}-1}
 \end{equation*}
where $s_{k}>1$ is the power-law of $c_{k}$ (as in \S \ref{sec:multi}). Note that this implies that the Schwarzian derivative $Sf$ 
is negative in each $\mathcal{U}_{k}$, {\it i.e.,} for all 
$x \in \mathcal{U}_{k} \setminus \{c_{k} \}$, we have
\begin{equation}\label{negS0}
 Sf(x)= -\dfrac{s_{k}^{2}-1}{2(x-c_{k})^{2}} <0 \ .
\end{equation}
We write $\mathcal{U}=\bigcup_{k=0}^{N-1} \mathcal{U}_k$, and we let $\mathcal{V}\subset S^1$ be an open set that contains  
none of the critical points of $f$ but is such that $\mathcal{U}\cup \mathcal{V}=S^1$. 

Now, consider a non-empty bridge $G_i\subset I_n(c_0)$, namely
\[
 G_i\;=\;\bigcup_{k=k_i+1}^{k_{i+1}-1} \Delta_k \ .
\]
We define the {\it reduced bridge\/} $G_i^*$ associated with $G_i$ to be
\[
 G_i^*\;=\;\bigcup_{k=k_i+2}^{k_{i+1}-2} \Delta_k \ .
\]
In other words, $G_i^*$ is simply $G_i$ minus its two lateral atoms. In particular, if $G_i$ is made up of $\leq  2$ atoms of $\mathcal{P}_{n+1}(c_0)$, then 
$G_i^*=\O$.  With this terminology, we can now state the 
following fundamental lemma.

\begin{lemma}\label{negSret}
 There exists a positive integer $n_0=n_0(f)$ such that the following holds for all $n\geq n_0$. 
For each non-empty reduced bridge $G_i^*\subset I_n(c_0)$, the restriction 
$f^{q_{n+1}}|_{G_i^*}$ has negative Schwarzian derivative everywhere, {\it i.e.,} for all 
$x\in G_i^*$ we have $Sf^{q_{n+1}}(x)<0$.   
\end{lemma}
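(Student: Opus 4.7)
The plan is to invoke the Schwarzian chain rule
\[
Sf^{q_{n+1}}(x) \;=\; \sum_{j=0}^{q_{n+1}-1} (Df^j(x))^2\, Sf(f^j(x))
\]
and show that the right-hand side is strictly negative for every $x\in G_i^*$, provided $n$ is taken large enough. I would split the iteration times into a critical set $A(x)=\{j:\,f^j(x)\in\mathcal{U}\}$ and a regular set $B(x)=\{j:\,f^j(x)\in\mathcal{V}\setminus\mathcal{U}\}$. The canonical form \eqref{negS0} guarantees that each summand indexed by $A(x)$ is strictly negative with an explicit $1/(f^j(x)-c_k)^2$ singularity, whereas on the compact set $\mathcal{V}\setminus\mathcal{U}$ the Schwarzian $Sf$ is uniformly bounded in absolute value by some $M=M(f)$.

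Before estimating either contribution, I would exploit the construction of the critical spots in \S \ref{auxpart}: the bridge $G_i$ contains no critical point of $f^j$ for any $0\leq j\leq q_{n+1}$, so each $f^j|_{G_i}$ is a diffeomorphism, and the family $\{f^j(G_i):\,0\leq j\leq q_{n+1}-1\}$ has bounded multiplicity of intersection, being a sub-family of $\mathcal{P}_{n}^*(c_0)$. The buffer atoms $\Delta_{k_i+1}$ and $\Delta_{k_{i+1}-1}$ provide a definite $\tau$-scaled neighborhood of $G_i^*$ inside $G_i$ (by the real bounds and Proposition \ref{consecatoms}), which persists under the diffeomorphism $f^j$, so the Koebe distortion principle (Lemma \ref{koebe}) yields $(Df^j(x))^2\asymp \bigl(|f^j(G_i^*)|/|G_i^*|\bigr)^2$. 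Summing over $j\in B(x)$ and using bounded multiplicity leads to
\[
\sum_{j\in B(x)} (Df^j(x))^2\, |Sf(f^j(x))| \;\leq\; \frac{K_1(f)}{|G_i^*|^2}.
\]

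For the critical contribution I would combine the canonical form \eqref{negS0} with the power law (Definition \ref{def:multicritic}(2)) and the orbit's combinatorial itinerary: by Lemmas \ref{criticlarge} and \ref{largebridge}, at the time $j^*$ when the orbit passes closest to a given $c_k$, the ratio $|f^{j^*}(G_i^*)|/\mathrm{dist}(f^{j^*}(x),c_k)$ is comparable to a universal constant, so the single term $(Df^{j^*}(x))^2 Sf(f^{j^*}(x))$ already contributes at most $-K_2(f,n)/|G_i^*|^2$, with $K_2(f,n)$ growing as $n\to \infty$ thanks to the shrinking of $|I_n|$. Choosing $n_0(f)$ so large that $K_2(f,n_0)>K_1(f)$ makes the overall chain-rule sum negative for all $n\geq n_0$.

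\textbf{Main obstacle.} The subtle step is to bound the regular contribution uniformly in $n$ in the \emph{saddle-node} case, where $G_i^*$ has many atoms of $\mathcal{P}_{n+1}(c_0)$ and the buffer is tiny relative to $G_i^*$. In that regime, I would apply Koebe one fundamental domain at a time using the almost-parabolic structure of $f^{q_{n+1}}|_{G_i}$, combining Yoccoz's Lemma \ref{lemyoccoz} with the balanced decomposition of Lemma \ref{balancedecomp} to control the relative sizes. Up to this adjustment, the argument should parallel the unicritical case treated in \cite{dFdM}, with the extra bookkeeping that each of the $N$ critical points of $f$ generates its own critical spot along the orbit.
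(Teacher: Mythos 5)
Your overall architecture is right: chain rule for the Schwarzian, split the iterates into a critical set $A(x)$ and a regular set $B(x)$, use the buffer atoms of $G_i\setminus G_i^*$ together with Koebe to estimate $Df^j$ on $G_i^*$, and then compare the two contributions. This matches the paper in spirit. But the quantitative comparison you set up does not close, and the closing step you propose is incorrect.

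The crucial point you are missing is how the regular contribution is made \emph{small} relative to $1/|I_n(c_0)|^2$. With the Koebe estimate $|Df^j(x)|\asymp |I_n^j(c_0)|/|I_n(c_0)|$ and $|Sf|\leq M$ on $\mathcal V$, one finds
\[
\sum_{j\in B(x)}(Df^j(x))^2\,|Sf(f^j(x))|
\;\lesssim\;
\frac{M}{|I_n(c_0)|^2}\sum_{I_n^j\subset\mathcal V}|I_n^j(c_0)|^2
\;\leq\;
\frac{M\,\delta_n}{|I_n(c_0)|^2},
\]
where $\delta_n=\max_j|I_n^j(c_0)|$, because $\sum_j|I_n^j|^2\le\delta_n\sum_j|I_n^j|\le\delta_n$. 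The factor $\delta_n\to 0$ is the engine of the proof. Your bound ``$\le K_1(f)/|G_i^*|^2$'' drops this $\delta_n$ factor and so is not tight enough to compare against the negative contribution, which is of the very same order $1/|G_i^*|^2$.

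You try to compensate by asserting that the closest-approach term contributes $-K_2(f,n)/|G_i^*|^2$ with $K_2(f,n)\to\infty$. That claim is unsubstantiated and, I believe, false as stated: from your own description, $|f^{j^*}(G_i^*)|\asymp\mathrm{dist}(f^{j^*}(x),c_k)$ forces $(Df^{j^*}(x))^2\,Sf(f^{j^*}(x))\asymp -1/|G_i^*|^2$ with a comparability constant depending only on the real bounds and the critical exponent---the ``shrinking of $|I_n|$'' cancels exactly and does not make the constant grow. In fact the paper uses only the single term $j=0$, namely $Sf(x)=-(s_0^2-1)/2(x-c_0)^2$ with $|x-c_0|\asymp|I_n(c_0)|$, which gives a \emph{fixed} lower bound $\Sigma_1^{(n)}(x)<-K_1/|I_n(c_0)|^2$; it is then $\delta_n\to 0$ in $\Sigma_2^{(n)}$, not any growth in $\Sigma_1^{(n)}$, that pins down $n_0(f)$.

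Finally, your ``main obstacle'' about the saddle-node case is a red herring. The buffer atoms $\Delta_{k_i+1}$ and $\Delta_{k_{i+1}-1}$ are comparable to $|G_i^*|\asymp|I_n(c_0)|$ by the real bounds together with Lemmas \ref{criticlarge} and \ref{largebridge}, regardless of how many fundamental domains $G_i^*$ contains. So $G_i$ always provides a definite $\tau$-scaled neighborhood of $G_i^*$, Koebe applies once and for all, and no appeal to Yoccoz's Lemma or the balanced decomposition is needed at this point.
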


\begin{proof}
 Given $x\in G_i^*$ and $n\in \mathbb{N}$, the chain rule for the Schwarzian derivative tells us that
\begin{equation}\label{negS1}
 Sf^{q_{n+1}}(x)\;=\; \sum_{j=0}^{q_{n+1}-1} Sf(f^j(x))\left[Df^j(x)\right]^2 \ .
\end{equation}
The sum on the right-hand side of \eqref{negS1} can be split as $\Sigma_1^{(n)}(x) + \Sigma_2^{(n)}(x)$ where
\begin{equation}\label{negS2}
 \Sigma_1^{(n)}(x)\;=\; \sum_{I_n^j(c_0)\subset \mathcal{U}} Sf(f^j(x))\left[Df^j(x)\right]^2 \ ,
\end{equation}
and $\Sigma_2^{(n)}(x)$ is the sum over the remaining terms. 

Let $\delta_n=\max_{0\leq j< q_{n+1}} |I_n^j(c_0)|$. We know that $\delta_n\to 0$ as $n\to \infty$, because $f$ is topologically conjugate 
to a rotation. Thus, choose $n_1=n_1(f)$ so large that $\delta_n$ is smaller than the Lebesgue number of the covering 
$\{\mathcal{U}, \mathcal{V}\}$ of the circle, for all $n\geq n_1$. Then we certainly have, for all $n\geq n_1$ and all $x\in G_i^*$,
\begin{equation}\label{negS3}
 \left|\Sigma_2^{(n)}(x)\right|\;\leq\; \sum_{I_n^j(c_0)\subset \mathcal{V}} |Sf(f^j(x))|\left[Df^j(x)\right]^2 \ ,
\end{equation}
Now we proceed through the following steps.
\begin{enumerate}
 \item[(i)] Since $I_n(c_0)\subset \mathcal{U}$, the sum in the right-hand side of \eqref{negS2} includes the term 
with $j=0$, namely $Sf(x)$, and by \eqref{negS0} we have
\begin{equation}\label{negS4}
 Sf(x)= -\dfrac{s_{0}^{2}-1}{2(x-c_{0})^{2}} \ .
\end{equation}
All the orther terms in \eqref{negS2} are negative as well. Since $|x-c_0|\asymp |I_n(c_0)|$ (see (ii) below), we deduce from 
\eqref{negS2} and \eqref{negS4} that
\begin{equation}\label{negS5}
 \Sigma_1^{(n)}(x)\;<\; -\frac{K_1}{|I_n(c_0)|^2} \ ,
\end{equation}
where $K_1>0$ is a constant that depends only on the real bounds and the power-law exponent $s_0$. 
\item[(ii)] Since there are no critical points of $f^{q_{n+1}}$ in $\mathrm{int}(G_i)\supset G_i^*$, the map 
$f^{q_{n+1}}: \mathrm{int}(G_i) \to f^{q_{n+1}}(\mathrm{int}(G_i))$ is a diffeomorphism{\footnote{We denote by $\mathrm{int}(X)$ the {\it interior\/} of 
the set $X\subset S^1$. }}. The same can be said of the maps $f^{j}: \mathrm{int}(G_i) \to f^{j}(\mathrm{int}(G_i))$ for $0\leq j\leq q_{n+1}-1$. 
From the real bounds and Proposition \ref{consecatoms}, we have $|f^j(G_i^*)|\asymp |f^j(G_i)|\asymp |I_n^j(c_0)|$ for all 
$0\leq j\leq q_{n+1}-1$. Moreover, both components of $G_i\setminus G_i^*$ are comparable to $G_i^*$ (hence to $I_n(c_0)$ as well). Hence, by 
Koebe's distortion principle (Lemma \ref{koebe}) and the mean-value theorem, we have
\begin{equation}\label{negS6}
 |Df^j(x)| \;\asymp\;\frac{|f^j(G_i^*)|}{|G_i^*|}\;\asymp\; \frac{|I_n^j(c_0)|}{|I_n(c_0)|}\ ,
\end{equation}
for all $x\in G_i^*$ and all $0\leq j\leq q_{n+1}-1$. 
 \item[(iii)] With step (ii) at hand, we are ready to estimate the right-hand side of \eqref{negS3}. Using \eqref{negS6}, we see that 
there exists a constant $K_2>0$ depending only on the real bounds such that 
\begin{align*}
 \left|\Sigma_2^{(n)}(x)\right|\;&\leq\; K_2\sum_{I_n^j(c_0)\subset \mathcal{V}} |Sf(f^j(x))|\left(\frac{|I_n^j(c_0)|}{|I_n(c_0)|}\right)^2 \\
 &\leq\; \frac{K_2M}{|I_n(c_0)|^2}\sum_{I_n^j(c_0)\subset \mathcal{V}} |I_n^j(c_0)|^2\ ,
\end{align*}
where $M=\sup_{y\in \mathcal{V}}|Sf(y)|<\infty$ is a constant that depends only on $f$ (and the choice of $\mathcal{U}, \mathcal{V}$). 
But 
\[
\sum_{I_n^j(c_0)\subset \mathcal{V}} |I_n^j(c_0)|^2\leq \left(\max_{I_n^j(c_0)\subset \mathcal{V}} 
|I_n^j(c_0)|\right)\sum_{I_n^j(c_0)\subset \mathcal{V}} |I_n^j(c_0)| \;<\; \delta_n \ .
\] 
Therefore
\begin{equation}\label{negS7}
 \left|\Sigma_2^{(n)}(x)\right|\; \leq\;\frac{K_2M\delta_n}{|I_n(c_0)|^2}\ .
\end{equation}
\end{enumerate}

Finally, choosing $n_0=n_0(f)>n_1$ so large that $K_2M\delta_n < K_1$ for all $n\geq n_0$, we deduce from \eqref{negS5} and \eqref{negS7} 
that, indeed, $Sf^{q_{n+1}}(x)<0$ for all $x\in G_i^*$, for all $n\geq n_0$. 
\end{proof}

%%%%%%%%%%%%%%%%%%%%%%%%%%%%%%%
\begin{figure}[ht]
\begin{center}
\psfrag{l}[][]{$\;\Delta_{k_{i+1}}$} 
\psfrag{r}[][][1]{$\Delta_{k_i}$} 
\psfrag{f}[][][1]{$\ f^{q_{n+1}}$}
\psfrag{t}[][][1]{$\cdots$}
%\psfrag{a}[][]{$f^{q_{n}-q_{n-1}}(c_{0})$} 
%\psfrag{b}[][]{$f^{q_{n}}(c_{0})$} 
\includegraphics[width=3.8in]{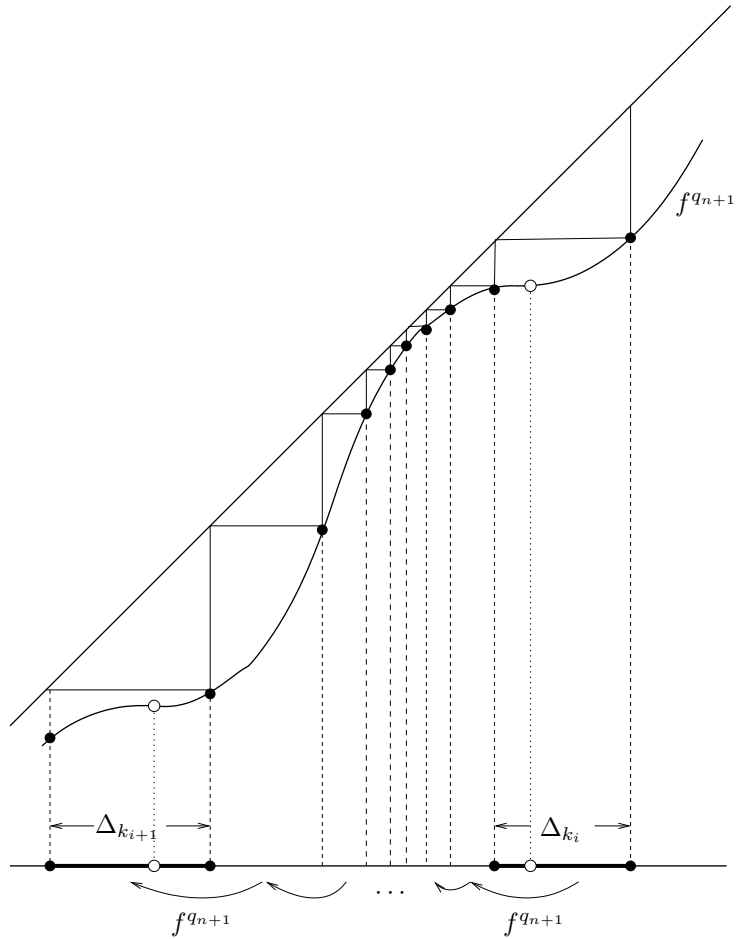}
\end{center}
\caption[doublesaddle]{\label{doublesaddle} Two consecutive critical spots and the bridge joining them: the dynamical picture.}
\end{figure}
%%%%%%%%%%%%%%%%%%%%%%%%%%%

From the lemma we have just proved, we deduce the following result concerning the bridges $G_i$, $0\leq i\leq r$, contained in the closest return 
interval $I_n(c_0)$ (see Figure~\ref{doublesaddle}). 

\begin{proposition}\label{primarybridges}
 For all $n\geq n_0$, where $n_0$ is as in Lemma \ref{negSret}, and each $i=0,1,2,\ldots,r$ for which the reduced bridge 
$G_i^*\subset I_n(c_0)$ is non-empty, the restriction 
\[f^{q_{n+1}}|_{G_i^*}\,:\,G_i^*\to f^{q_{n+1}}(G_i^*)\] 
is an almost parabolic map with length $\ell_i=k_{i+1}-k_i-3$ and width $\sigma_i\geq \sigma$, 
where $\sigma=\sigma(C)>0$ depends only on the constant $C$ in the real bounds.
\end{proposition}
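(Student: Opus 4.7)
The plan is to verify the four components of the statement: (i) the iteration structure of $f^{q_{n+1}}$ on $G_i^*$ giving the correct length $\ell_i$, (ii) that $f^{q_{n+1}}|_{G_i^*}$ is a $C^3$ diffeomorphism onto its image, (iii) the negative Schwarzian condition, and (iv) the uniform lower bound on the width.

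For (i), I will label the atoms of $G_i^*$ by setting $J_\nu = \Delta_{k_i+1+\nu}$ for $\nu = 1, 2, \ldots, \ell_i+1$. Since $f^{q_{n+1}}(\Delta_k) = \Delta_{k+1}$ directly from the definition $\Delta_k = f^{q_n+kq_{n+1}}(I_{n+1}(c_0))$, this labeling immediately yields $f^{q_{n+1}}(J_\nu) = J_{\nu+1}$. One then checks that $G_i^* = \bigcup_{\nu=1}^{\ell_i} J_\nu$ and $f^{q_{n+1}}(G_i^*) = \bigcup_{\nu=2}^{\ell_i+1} J_\nu$, with $\ell_i = k_{i+1} - k_i - 3$ as claimed (adopting the convention $k_{r+1} = a_{n+1}$ for the boundary case $i=r$).

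For (ii), by the very definition of the critical times $k_0 < k_1 < \cdots < k_r$, no $\Delta_k$ with $k_i < k < k_{i+1}$ contains a critical point of $f^{q_{n+1}}$. Hence $f^{q_{n+1}}$ has no critical points on $G_i^* = \bigcup_{k=k_i+2}^{k_{i+1}-2} \Delta_k$, and since $f$ is an orientation-preserving $C^r$ homeomorphism with $r \geq 3$, its restriction to $G_i^*$ is a $C^3$ diffeomorphism onto its image. Property (iii) is furnished directly by Lemma \ref{negSret}, whose hypothesis ($n \geq n_0$) is exactly the hypothesis of the proposition.

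Finally, for (iv), I will bound the width $\sigma_i$ from below as follows. The extremal atoms of $G_i^*$ are $J_1 = \Delta_{k_i+2}$ and $J_{\ell_i} = \Delta_{k_{i+1}-2}$. Each is joined to its neighboring critical spot ($\Delta_{k_i}$ on the left, $\Delta_{k_{i+1}}$ on the right) through only two adjacent atoms in $\mathcal{P}_{n+1}(c_0)$; iterated use of the real bounds (Theorem \ref{realbounds}) then yields $|J_1| \asymp |\Delta_{k_i}|$ and $|J_{\ell_i}| \asymp |\Delta_{k_{i+1}}|$, with comparability constants depending only on $C$. By Lemma \ref{criticlarge}, each critical spot is in turn comparable to $|I_n(c_0)|$, and since $G_i^* \subseteq I_n(c_0)$ we have $|G_i^*| \leq |I_n(c_0)|$. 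Combining these facts yields a constant $\sigma = \sigma(C) > 0$ with $|J_1|/|G_i^*| \geq \sigma$ and $|J_{\ell_i}|/|G_i^*| \geq \sigma$, hence $\sigma_i \geq \sigma$. Since the technical core of the proposition (the negative Schwarzian condition) has already been done in Lemma \ref{negSret}, I do not foresee any substantial obstacle: the remaining work consists only of assembling these pieces and carrying out the uniform width estimate.
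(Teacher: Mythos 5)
Your proposal is correct and follows essentially the same route as the paper: length and diffeomorphism-structure are read off from $\phi(\Delta_k)=\Delta_{k+1}$ and the absence of critical points in $\mathrm{int}(G_i)$, the negative Schwarzian is Lemma~\ref{negSret}, and the width is bounded below by comparing the extremal fundamental domains to $|I_n(c_0)|$ via the real bounds (the paper cites Lemma~\ref{largebridge} where you cite Lemma~\ref{criticlarge}, but the former's proof rests on the latter, so these are interchangeable). One small inaccuracy worth flagging in your width argument: for $i=r$ the index $k_{r+1}=a_{n+1}$ does \emph{not} label a critical spot, so Lemma~\ref{criticlarge} does not apply to $\Delta_{k_{r+1}}$; instead compare $J_{\ell_r}=\Delta_{a_{n+1}-2}$ through its neighbor $\Delta_{a_{n+1}-1}$ to $I_{n+2}(c_0)$, and then use the real bounds (Proposition~\ref{lemma4}) to get $|I_{n+2}(c_0)|\asymp|I_n(c_0)|$. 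With that substitution the argument goes through verbatim.
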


\begin{proof}
 By construction, the map $\phi=f^{q_{n+1}}|_{G_i^*}$ has no critical points, hence it is a diffeomorphism onto its image. 
Since $G_i^*=\bigcup_{k=k_i+2}^{k_{i+1}-2} \Delta_k$  and $\phi(\Delta_k)=f^{q_{n+1}}(\Delta_k)=\Delta_{k+1}$ for all $k$, 
it follows that the length of $\phi$ is as stated. Moreover, by Lemma \ref{negSret}, we have $S\phi=Sf^{q_{n+1}}<0$ throughout. 
Finally, since the intervals $\Delta_{k_i+2}$ and $\Delta_{k_{i+1}-2}$ are both comparable to $G_i^*$ 
(by the real bounds and Lemma \ref{largebridge}), the last statement concerning the width of $\phi$ follows as well. 
\end{proof}

This result has the following corollary, which is the goal of the present subsection.

\begin{corollary}\label{balancedbridges}
 For all $n\in \mathbb{N}$, each non-empty bridge $G_{i,j}=f^j(G_i)\in \mathcal{P}_{n}^*(c_0)$ 
admits a balanced decomposition (with uniform comparability constants depending only on the real bounds for $f$). 
\end{corollary}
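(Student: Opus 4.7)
The plan is to reduce to the case $j=0$ via Koebe distortion, and to handle $j=0$ using the almost-parabolic structure identified in Proposition \ref{primarybridges}. Regular (non-saddle-node) bridges contain at most $1000$ atoms of $\mathcal{P}_{n+1}(c_0)$, which are pairwise comparable by repeated application of the real bounds (Theorem \ref{realbounds}); for such bridges any decomposition obeying (i)--(ii) of Lemma \ref{balancedecomp} is automatically balanced, so there is nothing to prove. From now on I assume $G_i$ is saddle-node and $n \geq n_0$, where $n_0$ is as in Lemma \ref{negSret}; the finitely many smaller levels are absorbed into the constants.

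For $j=0$, Proposition \ref{primarybridges} says $f^{q_{n+1}}|_{G_i^*}$ is almost parabolic with definite width, so Lemma \ref{balancedecomp} gives a balanced decomposition of the reduced bridge $G_i^*$. To upgrade this to $G_i = \Delta_{k_i+1}\cup G_i^*\cup \Delta_{k_{i+1}-1}$, I invoke Remark \ref{balremark}: the lateral atoms $\Delta_{k_i+1},\Delta_{k_{i+1}-1}$ are comparable to $G_i^*$ because Yoccoz's Lemma \ref{lemyoccoz} gives $|\Delta_{k_i+2}|\asymp |G_i^*|$ (first fundamental domain of the almost-parabolic map) and the real bounds at level $n+1$ give $|\Delta_{k_i+1}|\asymp |\Delta_{k_i+2}|$, and symmetrically on the other side.

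For $j\geq 1$, the idea is to push the balanced decomposition of $G_i$ forward by $f^j$ and apply the Koebe distortion principle (Lemma \ref{koebe}) to the pair $(G_i^*,G_i)$. The hypotheses hold: $f^j|_{G_i}$ is a diffeomorphism, since any critical point of $f^j$ inside $I_n(c_0)$ is also a critical point of $f^{q_{n+1}}$ and hence lies in a critical spot, not in a bridge; $\sum_{\ell=0}^{j-1}|f^\ell(G_i)|\leq 1$, because the iterates lie in the pairwise disjoint long atoms of $\mathcal{P}_n(c_0)$; and the $j=0$ analysis just performed shows that $G_i$ contains a $\tau$-scaled neighborhood of $G_i^*$ with $\tau$ depending only on the real bounds. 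Bounded distortion of $f^j$ on $G_i^*$ then preserves all the size ratios of the balanced decomposition, yielding a balanced decomposition of $f^j(G_i^*)$ with uniform constants.

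It remains to extend this to $G_{i,j}$ via Remark \ref{balremark}, which demands $|f^j(\Delta_{k_i+1})|\asymp |f^j(G_i^*)|$ (and likewise for $\Delta_{k_{i+1}-1}$). Since $f^j$ is a homeomorphism, $f^j(\Delta_{k_i+1})$ and $f^j(\Delta_{k_i+2})$ remain adjacent long atoms of $\mathcal{P}_{n+1}(c_0)$, hence comparable by the real bounds at level $n+1$; Koebe then gives $|f^j(\Delta_{k_i+2})|\asymp |f^j(G_i^*)|$ (because $|\Delta_{k_i+2}|\asymp |G_i^*|$), and chaining these completes the proof. The main obstacle I anticipate is precisely this Koebe step for $j\geq 1$: its legitimacy hinges on a careful interplay between the real bounds and Yoccoz's inequality to secure simultaneously the diffeomorphism property of $f^j$ on all of $G_i$ and the definite Koebe space of $G_i^*$ inside $G_i$.
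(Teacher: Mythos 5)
Your proof is correct and follows essentially the same route as the paper: reduce to $n\geq n_0$, invoke Proposition \ref{primarybridges} together with Lemma \ref{balancedecomp} and Remark \ref{balremark} for the primary bridge $G_i$, and then push forward by $f^j$ using Koebe's distortion principle, which applies since $f^j$ is a diffeomorphism on $\mathrm{int}(G_i)$ and $G_i^*$ has definite Koebe space in $G_i$. You have merely spelled out in greater detail the verifications (the total length bound, the width of the reduced bridge, the diffeomorphism property of $f^j$) that the paper's terse proof leaves implicit.
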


\begin{proof}
 We may of course assume that $n\geq n_0$, where $n_0$ is as in Lemma \ref{negSret}.
 For {\it primary\/} bridges, namely $G_{i,0}=G_i\subset I_n(c_0)$ ({\it i.e.,\/} those with $j=0$), the assertion follows 
from Proposition \ref{primarybridges} and Lemma \ref{balancedecomp}, together with Remark \ref{balremark}. For {\it secondary\/} bridges, 
namely $G_{i,j}=f^j(G_i)$, $1\leq j\leq q_{n+1}-1$, use the fact that $f^{j}: \mathrm{int}(G_i) \to \mathrm{int}(G_{i,j})$ is a diffeomorphism 
and apply Koebe's distortion principle (the image under $f^j$ of the balanced decomposition of $G_i$ yields a balanced decomposition 
of $G_{i,j}$, as desired). 
\end{proof}

\section{Proof of the main theorem} \label{secmainthm}

In this section, we prove our main theorem, namely Theorem \ref{maintheorem}. We follow the same strategy that was used in \cite[section 4]{dFdM} in the 
proof of the corresponding result for unicritical circle maps. At this point, all the hard work is already behind us, and the adaptation is fairly 
straightforward. 

\subsection{A criterion for quasisymmetry}

As in \cite[\S 4.2]{dFdM}, the key general tool to be used in the proof of our Main Theorem will be an extension of a result 
essentially due to Carleson \cite{C}, namely Proposition \ref{qscriterion} below. Since such tool is stated without proof in \cite{dFdM}, 
we provide a proof as a courtesy to the reader. 

First we need to recall the following definition.
\begin{definition}\label{finegrid}
A \emph{fine grid\/} is a sequence $\{\mathcal{Q}_{n}\}_{n \geq 0}$ of finite partitions of $S^{1}$ (with $\mathcal{Q}_0$ the trivial partition) 
satisfying the following conditions. 
 \begin{itemize}
  \item [(a)] Each $\mathcal{Q}_{n+1}$ is a strict refinement of $\mathcal{Q}_{n}$;
  \item [(b)] There exists an integer $a\geq 2$ such that each atom $\Delta \in \mathcal{Q}_{n}$ is the disjoint union of at most $a$ atoms of 
  $\mathcal{Q}_{n+1}$;
  \item [(c)] There exists $\rho >1$ such that $\rho^{-1} |\Delta| \leq |\Delta'| \leq \rho|\Delta|$ for each pair of adjacent atoms 
$\Delta,\Delta' \in \mathcal{Q}_{n}$. 
 \end{itemize}
The numbers $a,\rho$ are called \emph{fine grid constants\/}.
\end{definition}

We note the following consequence of the above definition: There exist $0<\alpha<\beta<1$ depending only on the fine grid constants $a,\rho$ 
such that, whenever $\Delta\in \mathcal{Q}_{n}$, $\Delta^*\in \mathcal{Q}_{n-1}$ and $\Delta\subset \Delta^*$, we have
\begin{equation}\label{finegridconseq}
 \alpha|\Delta^*|\leq |\Delta|\leq \beta |\Delta^*|\ .
\end{equation}
In fact, it is not difficult to see that one can take $\alpha=(a\rho^{a-1})^{-1}$ and $\beta=(1+\rho^{-1})^{-1}$. 

\begin{proposition}\label{qscriterion} Let $\{\mathcal{Q}_{n}\}_{n \geq 0}$ be a fine grid in $S^1$, with fine grid constants $a,\rho$, and 
let $h : S^{1} \rightarrow S^{1}$ be a homeomorphism such that 
  \begin{equation}\label{fineh}
   \left| \dfrac{|\Delta'|}{|\Delta''|} - \dfrac{|h(\Delta')|}{|h(\Delta'')|}  \right| \leq \lambda,
  \end{equation}
for each pair of adjacent atoms $\Delta',\Delta'' \in \mathcal{Q}_{n} $, for all $n\geq 0$, where $\lambda>0$ is a given constant.
Then there exists $K=K(a,\rho,\lambda)>1$ such that $h$ is $K$-quasisymmetric.
 \end{proposition}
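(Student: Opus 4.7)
The plan is to transfer the fine grid structure to the image partitions $\{h(\mathcal{Q}_n)\}$ and then, for any $x \in S^1$ and $t > 0$ (small), to bracket both $|h(x+t) - h(x)|$ and $|h(x) - h(x-t)|$ by quantities of the form $|h(\Delta)|$ for suitable atoms $\Delta$ of the grid at a scale comparable to $t$.

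First I would verify that $\{h(\mathcal{Q}_n)\}_{n\geq 0}$ is itself a fine grid in $S^1$, with constants $a$ and $\rho' := \rho + \lambda$. Conditions (a) and (b) of Definition \ref{finegrid} are immediate. For (c), if $\Delta', \Delta''$ are adjacent atoms of $\mathcal{Q}_n$, then $|\Delta'|/|\Delta''| \leq \rho$ together with \eqref{fineh} gives $|h(\Delta')|/|h(\Delta'')| \leq \rho + \lambda$; interchanging $\Delta'$ and $\Delta''$ in \eqref{fineh} yields the symmetric lower bound $|h(\Delta')|/|h(\Delta'')| \geq (\rho + \lambda)^{-1}$. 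Hence the image grid also admits analogues $\alpha', \beta'$ of the constants in \eqref{finegridconseq}.

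Now fix $x \in S^1$ and $t > 0$ small, and let $n$ be the largest integer with $|\Delta_n(x)| \geq \rho t$, where $\Delta_n(x) \in \mathcal{Q}_n$ is the atom containing $x$. Then $\rho t \leq |\Delta_n(x)| \leq \rho t/\alpha$ by \eqref{finegridconseq}, so $|\Delta_n(x)| \asymp t$. Writing $\Delta_n(x) = [a, b]$ and letting $\Delta_n^\pm(x)$ denote its two neighbors in $\mathcal{Q}_n$, property (c) forces the left neighbor to have length at least $\rho^{-1}(b - a) \geq t$, so its left endpoint lies to the left of $x - t$; symmetrically on the right. Hence $[x-t, x+t] \subset U := \Delta_n^-(x) \cup \Delta_n(x) \cup \Delta_n^+(x)$. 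Since the three image atoms are mutually comparable in the image fine grid, $|h(U)| \asymp |h(\Delta_n(x))|$, producing the upper estimate $|h(x \pm t) - h(x)| \leq |h(U)| \asymp |h(\Delta_n(x))|$.

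The delicate step, which I expect to be the main obstacle, is the matching lower bound, because $[x, x+t]$ and $[x-t, x]$ need not contain any entire atom of $\mathcal{Q}_n$. To deal with this I descend to the smallest level $m \geq n$ with $|\Delta_m(x)| \leq \epsilon t$, where $\epsilon := (2(1+\rho))^{-1}$. Then $|\Delta_m(x)| \asymp t$ as well, and the integer $m - n$ is bounded by a constant depending only on $a, \rho$ (since $|\Delta_n(x)|/|\Delta_m(x)|$ is bounded while atoms contract by at most a factor $\beta$ per level). A pigeonhole argument shows that $\Delta_m^+(x) \subset [x, x+t]$ is a fully contained atom of $\mathcal{Q}_m$: otherwise $[x, x+t]$ would lie inside $\Delta_m(x) \cup \Delta_m^+(x)$, of total length at most $\epsilon(1 + \rho)t = t/2 < t$, contradicting $|[x, x+t]| = t$. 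Consequently
\[
|h(x + t) - h(x)| \;\geq\; |h(\Delta_m^+(x))| \;\asymp\; |h(\Delta_m(x))| \;\asymp\; |h(\Delta_n(x))|,
\]
where the first comparability uses adjacency in the image of $\mathcal{Q}_m$, and the second iterates the image analogue of \eqref{finegridconseq} at most $m - n$ (bounded) times along the nested chain $\Delta_m(x) \subset \cdots \subset \Delta_n(x)$. A mirrored argument on $[x-t, x]$ yields the analogous lower bound for $|h(x) - h(x-t)|$. Combined with the upper bounds, this shows that both $|h(x+t) - h(x)|$ and $|h(x) - h(x-t)|$ are comparable to $|h(\Delta_n(x))|$ with constants depending only on $a, \rho, \lambda$, and taking their ratio produces the quasisymmetric constant $K = K(a, \rho, \lambda)$.
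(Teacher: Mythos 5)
Your proof is correct, and it takes a genuinely different route from the paper's. The paper applies an auxiliary covering lemma (Lemma \ref{auxlemqs}) to $I=[x-t,x+t]$ to find a union $U$ of at most $2a$ atoms of $\mathcal{Q}_{n(I)}$ covering $I$, then refines $p$ levels deeper to shrink every atom below $|I|/4$, locates one full atom $J_{i_0}$ inside $I^-$, and bounds $|h(I^+)|/|h(I^-)|$ by telescoping the ratios $|h(J_{i+1})|/|h(J_i)|\leq\lambda+\rho$ across a bounded chain. You instead make explicit what the paper uses only implicitly -- that $\{h(\mathcal{Q}_n)\}$ is itself a fine grid with constants $a$ and $\rho+\lambda$ -- and then run a two-scale argument anchored at the point $x$: a coarse level $n$ with $|\Delta_n(x)|\asymp t$ yields a three-atom cover of $[x-t,x+t]$ and hence the upper bound, and a finer level $m$ with $|\Delta_m(x)|\leq\epsilon t$ yields, by a containment/length contradiction, a full atom $\Delta_m^{\pm}(x)$ inside each half, with $m-n$ bounded in terms of $a,\rho$ so the scales compare. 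Both proofs ultimately rest on \eqref{finegridconseq} and on bounding the number of telescoping steps by the grid constants; yours has the advantage of reducing the problem cleanly to the grid-isomorphism picture (matching the remark after the proposition), while the paper's is a bit more economical in that it never leaves the original grid. Minor polish: for small $t$ one should note $m>n$ automatically (since $\epsilon<1<\rho$), and the choice of $\Delta_n(x)$ when $x$ lies on an atom boundary should be fixed by a convention, but neither affects the substance.
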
 

We stress that the image of a fine grid under a homeomorphism satisfying the set of conditions \eqref{fineh} is also a fine grid. 
Conversely, if a homeomorphism $h$ is an {\it isomorphism\/} between two fine grids, {\it i.e.\/} if $h$ establishes 
a perfect correspondence between the atoms of both, then $h$ must satisfy a set of conditions like \eqref{fineh}.    
 
In order to prove Proposition \ref{qscriterion}, we need the following auxiliary lemma. 

\begin{lemma}\label{auxlemqs}
 Given a fine grid $\{\mathcal{Q}_{n}\}_{n \geq 0}$ with fine grid constants $a,\rho$ as above, let $I\subset S^1$ be an interval 
with non-empty interior, and let $n=n(I)$ 
be the smallest natural number such that $I\supset \Delta$  for some atom $\Delta\in \mathcal{Q}_n$. Then there exists an interval 
$U\supset I$ with the following properties:
\begin{enumerate}
 \item[(i)] $U$ is the union of at most $2a$ atoms of $\mathcal{Q}_n$;
 \item[(ii)] $|U|\leq \alpha^{-1}(1+\rho)|I|$, where $\alpha$ is the constant in \eqref{finegridconseq}. 
\end{enumerate}
\end{lemma}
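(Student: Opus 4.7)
The plan is to exploit the minimality of $n=n(I)$ to trap $I$ inside at most two adjacent atoms at the previous level, and then use the refinement and comparability properties of the fine grid to control both the atom count and the length.

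First I would set up the geometry. If $n(I)=0$ one can simply take $U=S^1=I$, so assume $n\geq 1$. By minimality of $n$, the interval $I$ contains no atom of $\mathcal{Q}_{n-1}$. Since three consecutive atoms of a partition of $S^1$ cannot all meet an interval without the middle one being contained in it, this forces $I$ to meet at most two atoms of $\mathcal{Q}_{n-1}$. Call them $\Delta^*_1$ and (possibly) $\Delta^*_2$, so that $I\subset \Delta^*_1\cup \Delta^*_2$. Set $U=\Delta^*_1\cup\Delta^*_2$ (or $U=\Delta^*_1$ if $I$ meets only one).

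Next I would verify (i). By property (b) of the fine grid, each $\Delta^*_i$ is a union of at most $a$ atoms of $\mathcal{Q}_n$, so $U$ is a union of at most $2a$ such atoms.

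Then I would verify (ii) using the consequence \eqref{finegridconseq} of the fine grid definition. By hypothesis there is some atom $\Delta\in \mathcal{Q}_n$ with $\Delta\subset I$, and this $\Delta$ lies inside one of the parent atoms, say $\Delta\subset \Delta^*_1$. Applying \eqref{finegridconseq} gives $|\Delta|\geq \alpha|\Delta^*_1|$, hence
\[
|\Delta^*_1|\;\leq\; \alpha^{-1}|\Delta|\;\leq\; \alpha^{-1}|I|\,.
\]
If $\Delta^*_2$ is present, it is adjacent to $\Delta^*_1$, so property (c) yields $|\Delta^*_2|\leq \rho|\Delta^*_1|$. Adding, we obtain
\[
|U|\;\leq\; (1+\rho)|\Delta^*_1|\;\leq\; \alpha^{-1}(1+\rho)|I|\,,
\]
which is (ii). There is no serious obstacle here; the only thing to double-check is that the minimality of $n$ really does force the ``at most two adjacent atoms'' trapping, and this is a standard combinatorial observation about partitions into arcs of $S^1$.
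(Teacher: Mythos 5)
Your proof is correct and follows essentially the same route as the paper's: trap $I$ inside at most two adjacent atoms of $\mathcal{Q}_{n-1}$ by minimality, then use property (b) for the atom count and property (c) together with \eqref{finegridconseq} for the length bound. The only (harmless) cosmetic difference is that you make the base case $n(I)=0$ explicit, which the paper leaves implicit.
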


\begin{proof}
Suppose $I$ intersects $3$ distinct consecutive atoms of $\mathcal{Q}_{n-1}$, say $\Delta_1,\Delta_2,\Delta_3$, with $\Delta_2$ lying between 
$\Delta_1$ and $\Delta_3$. Then we necessarily have $\Delta_2\subseteq I$; but this contradicts the definition of $n=n(I)$. Hence $I$ is contained 
in the union $U$ of at most two atoms of $\mathcal{Q}_{n-1}$. Since each atom of $\mathcal{Q}_{n-1}$ is the union of at most $a$ atoms of  
$\mathcal{Q}_{n}$, part (i) follows. To prove (ii), given that $I\supset \Delta\in \mathcal{Q}_{n}$, let $\Delta^*$ be the unique atom of 
$\mathcal{Q}_{n-1}$ that contains $\Delta$. By part (i), $U$ contains $\Delta^*$ and at most one other atom $\Delta^{**}\in \mathcal{Q}_{n-1}$ 
adjacent to $\Delta^*$. Therefore, using property (c) in Definition \ref{finegrid} and \eqref{finegridconseq}, we have
\[
 |U|\leq |\Delta^*|+|\Delta^{**}| \leq (1+\rho)|\Delta^*|\leq \alpha^{-1}(1+\rho)|\Delta|\leq \alpha^{-1}(1+\rho)|I|\ .
\]
This establishes (ii) and finishes the proof.
\end{proof}

\begin{proof}[Proof of Proposition \ref{qscriterion}]
We will verify the quasisymmetry condition 
\[
 \frac{1}{K}\leq \frac{h(x+t)-h(x)}{h(x)-h(x-t)}\leq K 
\]
for all $x\in S^1=\mathbb{R}/\mathbb{Z}$ and all $t>0$, with $K>1$ a constant to be determined in the course of the argument.
Let $I=[x-t,x+t]$ be the interval on the circle that contains $x$, and write $I=I^{-}\cup I^{+}$, where $I^{-}=[x-t,x]$ and 
$[x,x+t]$. 
By Lemma \ref{auxlemqs}, there exist $n=n(I)$ and an 
interval $U\supset I$ such that $U$ is the union of at most $2a$ atoms of $\mathcal{Q}_n$  and $|U|\leq \rho_1|I|$, where 
$\rho_1=\alpha^{-1}(1+\rho)$. Let $p$ be the smallest positive integer such that $\beta^p\rho_1<\frac{1}{4}$. Write 
$U$ as the union of atoms of $\mathcal{Q}_{n+p}$, say
\[
 U=J_1\cup J_2\cup\cdots \cup J_s\ ,
\]
where the $J_i\in \mathcal{Q}_{n+p}$, $1\leq i\leq s$ are assumed to be ordered counterclockwisely on the circle. Note that 
we must have $s\leq 2a^{p+1}$. By \eqref{finegridconseq} and induction, we have $|J_i|\leq \beta^{p}|J_i^*|$, where 
$J_i^*\subseteq U$ is the unique atom of $\mathcal{Q}_n$ that contains $J_i$. Hence we get
\[
|J_i|\leq \beta^p|J_i^*|\leq \beta^p|U|\leq \beta^p\rho_1|I|<\frac{1}{4}|I|\ .
\] 
But this means that at least one of the $J_i$'s, say $J_{i_0}$, is contained in $I^{-}$. 
Thus, we have on the one hand $J_{i_0}\subset I^{-}$ and on the other hand 
$I^{+}\subseteq J_{i_0+1}\cup J_{i_0+2}\cup \cdots \cup J_s$. Moreover, by the hypothesis \eqref{fineh}, for all $1\leq i\leq s-1$ 
we have
\[
 \frac{|h(J_{i+1})|}{|h(J_i)|}\leq \lambda + \frac{|J_{i+1}|}{|J_i|}\leq \lambda + \rho\ ,
\]
from which it follows by telescoping that 
\[
 \frac{|h(J_{i+\nu})|}{|h(J_i)|}\leq (\lambda+\rho)^\nu\ \ \textrm{for all}\ \nu=1,2,\cdots,s-i\ .
\]
Therefore
\begin{align*}
 \frac{h(x+t)-h(x)}{h(x)-h(x-t)} &= \frac{|h(I^{+})|}{|h(I^{-})|} \leq \dfrac{\sum_{i=i_0+1}^{s} |h(J_i)|}{|h(J_{i_0})|}\\
 & \leq \sum_{\nu=1}^{s-{i_0}} (\lambda+\rho)^\nu \leq \sum_{\nu=1}^{2a^{p+1}} (\lambda+\rho)^\nu \ .
\end{align*}
This proves that $h$ is $K$-quasisymmetric with $K= \sum_{\nu=1}^{2a^{p+1}} (\lambda+\rho)^\nu$, a constant that indeed 
depends only on the constants $a,\rho, \lambda$. 
\end{proof}

\subsection{A suitable fine grid}\label{sec:finegrid}

Now we define an auxiliary partition $\widetilde{\mathcal{P}}_n^*(c_0)$, for each
$n\ge 1$. The atoms of $\widetilde{\mathcal{P}}_n^*(c_0)$  are all atoms of 
$\mathcal{P}_n^*(c_0)$ which are not saddle-node, together with the atoms of the
balanced partitions of all saddle-node atoms of $\mathcal{P}_n^*(c_0)$. The
partition $\mathcal{Q}_n$ that we want is constructed from $\widetilde{\mathcal{P}}_m^*(c_0)$ and 
$\mathcal{P}_m^*(c_0)$ for various values of $m\leq n$ as follows.

\begin{proposition} \label{gridpprop}
There exists a fine grid $\{\mathcal{Q}_n\}$ in
$S^1$ with the following properties.
\begin{enumerate}
\item[($a$)] Every atom of $\mathcal{Q}_n$ is the union of at most $a=4N+3$ 
atoms{\footnote{As we saw in \S \ref{auxpart}, each long interval $I_n^i(c_0)\in \mathcal{P}_n(c_0)$ is decomposed as the union of $2r+3\leq 4N+3$ atoms 
of $\mathcal{P}_n^*(c_0)$.}} of $\mathcal{Q}_{n+1}$.
\item[($b$)] Every atom $\Delta\in \mathcal{Q}_n$ is a union of atoms of
$\mathcal{P}_m^*(c_0)$ for some $m\leq n$, and there are three possibilities:
\begin{enumerate}
\item[($b_1$)] $\Delta$ is a single atom of $\mathcal{P}_m^*(c_0)$;
\item[($b_2$)] $\Delta$ is a central interval of $\widetilde{\mathcal{P}}_m^*(c_0)$; 
\item[($b_3$)] $\Delta$ is the union of at least two atoms of
$\mathcal{P}_{m+1}^*(c_0)$ contained in a single atom of $\widetilde{\mathcal{P}}_m^*(c_0)$.
%\item{($b_4$)} $\Delta$ is a union of intervals which are
%simultaneously atoms of ${\Cal P}_m$ and $\widetilde{\Cal P}_m$.
\end{enumerate}
\end{enumerate}
\end{proposition}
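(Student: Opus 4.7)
The plan is to construct $\{\mathcal{Q}_n\}$ by induction, starting from $\mathcal{Q}_0=\{S^1\}$ (with the first few steps staging an initial subdivision via a bounded number of binary splits so as to enter a regime in which atoms can be identified with atoms of some $\mathcal{P}_m^*(c_0)$). At each inductive step, every atom $\Delta\in\mathcal{Q}_n$ is to be replaced by at most $a=4N+3$ sub-atoms forming a strict partition of $\Delta$, with the rule for the split dictated by which of the three types $(b_1), (b_2), (b_3)$ describes $\Delta$. The goal is to maintain the three structural conditions of Proposition \ref{gridpprop} at every step and then verify axiom (c) of Definition \ref{finegrid}.

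The refinement rules go as follows. For a type-$(b_1)$ atom $\Delta$ that is a \emph{regular} atom of $\mathcal{P}_m^*(c_0)$, I locate the smallest $m' > m$ at which $\Delta$ decomposes nontrivially in $\mathcal{P}_{m'}^*(c_0)$ and split $\Delta$ into the resulting atoms; by the construction of the starred partitions in \S \ref{auxpart} there are at most $2r+3 \leq 4N+3$ pieces, each itself of type $(b_1)$. If $\Delta$ is a type-$(b_1)$ saddle-node bridge, Corollary \ref{balancedbridges} provides a balanced decomposition, and I peel only the outermost layer to produce three children $L_0, M_1, R_0$: the central $M_1$ is of type $(b_2)$, while the laterals $L_0=\Delta_{k_i+1}$, $R_0=\Delta_{k_{i+1}-1}$ are each a single long atom of $\mathcal{P}_{m+1}(c_0)$, hence naturally of type $(b_3)$ (or directly type $(b_1)$ in the degenerate case that they do not split further in $\mathcal{P}_{m+1}^*(c_0)$). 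A type-$(b_2)$ atom $\Delta=M_i$ with $i<d+1$ is peeled one more layer into $L_i\cup M_{i+1}\cup R_i$; for $i=d+1$, reclassify $M_{d+1}$ as type $(b_3)$. For a type-$(b_3)$ atom $\Delta$ contained in a $\widetilde{\mathcal{P}}_m^*(c_0)$-atom $A$, if $A$ is regular I split $\Delta$ into its $\mathcal{P}_{m+1}^*(c_0)$-atoms (at most $4N+3$ of them); if $A$ is a balanced-decomposition piece containing many fundamental domains of an almost parabolic map, I stage the splitting by recursive binary grouping of these domains, justified by Remark \ref{ratioofJs}, so that each stage has at most two children, until every piece becomes a single fundamental domain that can then be type-$(b_1)$-refined at later stages.

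Verification of Definition \ref{finegrid} and the main obstacle. Strict refinement (a) and the branching bound $a=4N+3$ in (b) are immediate from the construction. The nontrivial point, which is the main obstacle, is axiom (c): uniform comparability of adjacent atoms. For children from the same parent, this follows from three distinct sources according to the rule applied: Proposition \ref{consecatoms} for $\mathcal{P}^*$-refinement steps, Lemma \ref{balancedecomp}(iii) for one-layer peels of a balanced decomposition, and Remark \ref{ratioofJs} for binary splits within a single balanced-decomposition piece. For adjacent children from \emph{different} parents in $\mathcal{Q}_n$, the common boundary point is either an endpoint shared by two atoms of a single $\mathcal{P}_m^*(c_0)$, handled again by Proposition \ref{consecatoms}, or a critical point of $f$ at which atoms from dynamical partitions of distinct critical points meet, handled by Lemma \ref{intersectcomp}. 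Combining these estimates one obtains a uniform comparability constant $\rho$ depending only on the real bounds for $f$ and on $N$, completing the fine-grid construction.
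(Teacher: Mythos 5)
Your construction is essentially the same as the paper's proof: both proceed by induction on $n$, refining each atom of $\mathcal{Q}_n$ according to its structural type $(b_1)/(b_2)/(b_3)$, peeling one layer ($L_i,M_{i+1},R_i$) of the balanced decomposition for saddle-node atoms and their central intervals, binary-splitting lateral pieces of long bridges down to single fundamental domains, and verifying axiom (c) of Definition \ref{finegrid} via the real bounds, Lemma \ref{balancedecomp}, Remark \ref{ratioofJs} and Corollary \ref{balancedbridges}. One small inaccuracy in your verification step: all the partitions $\mathcal{P}_m^*(c_0)$ and their refinements are built from the forward orbit of the single chosen critical point $c_0$, so grid atoms never share a boundary point at another critical point $c_i$ with $i\neq 0$; the appeal to Lemma \ref{intersectcomp} there is therefore spurious (though harmless), the relevant comparisons being already covered by Proposition \ref{consecatoms} together with the real bounds.
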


\begin{proof} The proof is by induction on $n$. The first partition $\mathcal{Q}_1$ 
consists of all atoms of $\mathcal{P}_1^*(c_0)$ which are not saddle-node atoms
together with the intervals $L_0$, $M_1$ and $R_0$ of each saddle-node
interval $I\in \mathcal{P}_1^*(c_0)$ ($I=L_0\cup M_1\cup R_0$). It is clear that each
atom of $\mathcal{Q}_1$ falls within one of the categories ($b_1$)-($b_3$)
above. 

Assuming $\mathcal{Q}_n$ defined, define $\mathcal{Q}_{n+1}$ as follows. Take an
atom $I\in \mathcal{Q}_n$ and consider the four cases below.
\begin{enumerate}
\item[(1)] If $I$ is a single atom of $\mathcal{P}_m^*(c_0)$ then one of two things can
happen:
\begin{enumerate}
\item[(i)] $I$ is a saddle-node atom: In this case write $I=L_0\cup
M_1\cup R_0$ as above and take $L_0$, $R_0$ and $M_1$ as atoms of $\mathcal{Q}_{n+1}$. 
Note that the lateral intervals $L_0$ and $R_0$ are atoms of type
($b_1$), while the central interval $M_1$ is of type ($b_2$).
\item[(ii)] $I$ is not a saddle-node atom: Here, there are two sub-cases to consider. 
The first possibility is that $I$ is a single (regular) atom of 
$\mathcal{P}_m(c_0)$, in which case we break it into the union of at most $a$ atoms of 
$\mathcal{P}_{m+1}^*(c_0)$ and take them as atoms of $\mathcal{Q}_{n+1}$, all of which are 
of type ($b_1$). The second possibility is that $I$ is a (short) bridge, in which case 
we break it up into its $\leq 1,000$ constituent atoms of $\mathcal{P}_{m+1}(c_0)$ 
and take them as atoms of $\mathcal{Q}_{n+1}$, again all of type ($b_1$). 
%In this case write $I=L\cup
%M\cup R$ where $L$ and $R$ are the atoms of ${\Cal P}_{m+1}$ adjacent to the
%endpoints of $I$ and $M$ is the union of the other atoms of ${\Cal P}_{m+1}$
%inside $I$. Add these three intervals to ${\Cal Q}_{n+1}$, noting that $L$ and
%$R$ are of type ($b_1$), while $M$ is of type ($b_4$).
\end{enumerate}
\item[(2)] If $I$ is a central interval of $\widetilde{\mathcal{P}}_m^*(c_0)$ 
which is not the final interval, consider the next central interval of $\widetilde{\mathcal{P}}_m^*(c_0)$ 
inside $I$, say $M$, and the two corresponding lateral intervals $L$ and $R$
such that $I=L\cup M\cup R$, and declare $L$, $R$ and $M$ members of $\mathcal{Q}_{n+1}$. 
Note that $L$ and $R$ are of type ($b_3$), while $M$ is of type
($b_2$).

\item[(3)] If $I$ is a union of $p\ge 2$ consecutive atoms
$J_1,\ldots,J_p$ of $\mathcal{P}_{m+1}(c_0)$ inside a single atom of
$\mathcal{P}_m^*(c_0)$ (this happens when $I$ is contained in a lateral interval of the balanced decomposition of a long bridge), 
divide it up into two approximately equal parts. More
precisely, write $p=2q+r$, where $r=0$ or $1$, and consider $I=L\cup R$
where 
$$
L\;=\;\bigcup_{j=1}^qJ_j \ ,
\ R\;=\;\bigcup_{j=q+1}^pJ_j
\ .
$$
We obtain in this fashion two new
atoms of $\mathcal{Q}_{n+1}$ (namely $L$ and $R$) which are either single atoms of 
$\mathcal{P}_{m+1}(c_0)$, and therefore of type ($b_1$), or once again intervals 
of type ($b_3$).
%\item{(4)} If $I$ is a union of intervals which are simultaneously atoms of
%${\Cal P}_m$ and $\widetilde{\Cal P}_m$, divide it up exactly as in (3),
%obtaining either two or three new atoms of ${\Cal Q}_{n+1}$ which are either
%single atoms of ${\Cal P}_m$, and therefore of type ($b_1$), or once again
%intervals of type ($b_4$).
\end{enumerate}

This completes the induction. That $\{\mathcal{Q}_{n}\}_{n\ge 1}$ constitutes a
fine grid follows easily from the real bounds, Lemma \ref{balancedecomp}, Remark \ref{ratioofJs} 
and Corollary \ref{balancedbridges}. Indeed, it suffices to verify that condition (c) of Definition \ref{finegrid} is 
satisfied (for some constant $\rho>1$ depending only on the real bounds). 
Given two ajacent atoms $\Delta,\Delta'\in \mathcal{Q}_n$, there are two cases to consider.
\begin{enumerate}
 \item[(a)] There exist $m, m'\leq n$ such that $\Delta$ is a single atom of $\mathcal{P}_m(c_0)$ and $\Delta'$ is a single atom of $\mathcal{P}_{m'}(c_0)$. 
In this case, either $m=m'$, or $m$ and $m'$ differ by 1 (this is easily proved by induction 
on $n$ from the construction of $\mathcal{Q}_n$ given above). But then we have $|\Delta|\asymp |\Delta'|$ by the real bounds (Theorem 
\ref{realbounds}). 
 \item[(b)] For some $m\leq n$, at least one of the two atoms, say $\Delta$, is the union of $p\geq 2$ atoms of $\mathcal{P}_{m+1}(c_0)$ inside a single atom 
of $\mathcal{P}_m^*(c_0)$, which is necessarily a bridge. This implies that both $\Delta$ and $\Delta'$ are contained in the same 
bridge $G\in \mathcal{P}_m^*(c_0)$. Looking at the balanced decomposition of $G$ (given by Corollary \ref{balancedbridges}), 
we see that there are two possibilities. The first possibility is that both $\Delta$ and $\Delta'$ are contained in the same lateral interval 
($L_i,R_i$) or the same central interval ($M_i$) of said balanced decomposition. In this case, $\Delta$ and $\Delta'$ are both unions of the 
{\it same\/} number of fundamental domains of $G$, and we have $|\Delta|\asymp |\Delta'|$ by Lemma \ref{balancedecomp} and Remark \ref{ratioofJs}. 
The second possibility is that $\Delta$ and $\Delta'$ are contained in adjacent intervals of the balanced decomposition of $G$. In this case, 
one of the two atoms, $\Delta$ or $\Delta'$, is the union of at most twice as many fundamental domains of $G$ as the other, and 
we have $|\Delta|\asymp |\Delta'|$, again by Lemma \ref{balancedecomp} and Remark \ref{ratioofJs}.
\end{enumerate}
This establishes the desired comparability of adjacent atoms of $\mathcal{Q}_n$ in all cases, with uniform constants depending only on the real bounds, 
and the proof is complete.
\end{proof} 

\subsection{Proof of Theorem \ref{maintheorem}} Everything is in place now, so we can give the proof of the main theorem 
in just a few lines.

\begin{proof}[Proof of Theorem \ref{maintheorem}]
 Consider the fine grids $\{\mathcal{Q}_n(f)\}$ and $\{\mathcal{Q}_n(g)\}$ given by Proposition \ref{gridpprop} applied to 
$f$ and $g$, respectively (the construction being based on chosen critical points $c_0(f), c_0(g)$ with $h(c_0(f))=c_0(g)$). 
Since by hypothesis the conjugacy $h$ between $f$ and $g$ maps each critical point of $f$ to 
a corresponding critical point of $g$, it follows that  $h$ maps each critical spot of $\mathcal{P}_m^*(c_0(f))$ 
to a corresponding critical spot of $\mathcal{P}_m^*(c_0(g))$, and likewise for bridges, for all $m\geq 1$. 
This means that $h$ is an isomorphism between the two fine grids. Therefore $h$ is quasisymmetric, by 
Proposition \ref{qscriterion}. 
\end{proof}

\section{Final comments}\label{sec:final}

The quasisymmetric rigidity theorem we have just proved assumes that the conjugacy $h$ between 
$f$ and $g$ maps the critical points of $f$ to the critical points of $g$, but it does not assume that 
the critical exponents at corresponding critical points are the same. This is clearly a necessary condition 
for $h$ to be $C^1$. We conjecture that this condition is also sufficient. 

Let us be more precise. Given a multicritical circle map $f$ with $n_f$ critical points $c_i=c_i(f)$, $0\leq i\leq n_f-1$,
and irrational rotation number $\rho(f)$, let $\mu_f$ be its unique invariant Borel probability measure. 
We define the {\it signature\/} of $f$ to be the $(2n_f+2)$-tuple 
\[
(\rho(f),n_f;s_0,s_1,\ldots,s_{n_f-1};\lambda_0,\lambda_1,\ldots,\lambda_{n_f-1}) \ ,
\]
where  $s_i$ is the critical exponent of the critical point $c_i$ and 
$\lambda_i=\mu_f[c_i,c_{i+1})$ (with the convention that $c_{n_f}=c_0$). Note that $\sum_{i=0}^{n_f-1}\lambda_i=1$, and 
the number of critical points $n_f$ is redundant information once we are given the rest of the data. Hence  
the signature of $f$ carries, in fact, only $2n_f$ independent parameters.  

\begin{conjecture}\label{conjecture}
 If two $C^3$ multicritical circle maps without periodic points have the same signature, then they are conjugate by a 
$C^1$ diffeomorphism. Moreover, if their common rotation number is of \emph{bounded type\/}, then the conjugacy is in fact 
$C^{1+\alpha}$ for some universal $\alpha>0$.
\end{conjecture}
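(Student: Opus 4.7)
The plan is to carry out the remaining steps (3)--(5) of Sullivan's strategy outlined in the introduction, using Theorem A and Theorem \ref{realbounds} of the present paper as inputs. First I would set up an appropriate renormalization operator $R$ acting on a space of \emph{multicritical commuting pairs}, whose combinatorial type records the continued fraction expansion of $\rho(f)$ together with the distribution of critical points and of the invariant measure; the auxiliary partitions $\mathcal{P}_n^*(c_0)$ built in \S \ref{auxpart} are already tailored to serve as natural renormalization domains. Two maps with the same signature generate, by construction, sequences $\{R^nf\}$ and $\{R^ng\}$ of the same combinatorial type, and the pointwise invariant $\lambda_i=\mu_f[c_i,c_{i+1})$ guarantees that the Yoccoz conjugacy can be chosen to send $c_i(f)$ to $c_i(g)$, so that Theorem A applies and produces a quasisymmetric conjugacy with a quasisymmetry constant independent of $n$ after renormalization.

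The first and hardest step will be to obtain \emph{complex a-priori bounds} for the renormalizations. Concretely, one needs to show that after conjugating by the canonical coordinate of \S \ref{auxpart}, each $f$ admits a (quasi-regular if the critical exponents are non-integer) extension to a complex neighborhood of $S^1$ in which the renormalizations $R^nf$ can be represented as generalized polynomial-like maps whose codomains carry annuli of moduli bounded below by a universal constant. The real bounds of Theorem \ref{realbounds} together with the uniform quasisymmetry from Theorem A should, in principle, provide enough geometric control to carry out a pullback / Yoccoz-puzzle construction adapted to several critical points. The principal obstacle lies precisely here: the multicritical setting lacks the clean polynomial-like structure of the unicritical case treated in \cite{dFdM,dFdM2,Yam,Yam1,Yam2}, and for non-integer $s_i$ one must work with quasi-regular rather than holomorphic extensions, which requires controlling a Beltrami coefficient supported near each critical point.

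Granted complex bounds, one then extends $R$ to a complex-analytic (or quasi-regular) operator on a suitable Banach manifold of commuting pairs, and invokes the by now classical compactness-contraction scheme of McMullen \cite{McM}, Lyubich \cite{Ly1,Ly2} and de Faria--de Melo \cite{dFdM2}: the uniform quasiconformal conjugacy between $R^nf$ and $R^ng$, combined with an infinite-dimensional Schwarz lemma applied to a hyperbolic metric on the relevant Teichmüller-like space of deformations, yields exponential convergence
\begin{equation*}
\|R^nf - R^ng\|_{C^0} \;\leq\; C\lambda^n \qquad (0<\lambda<1),
\end{equation*}
with the renormalizations suitably rescaled. The signature being preserved under $R$ means that $f$ and $g$ remain in the same stable manifold throughout.

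Finally, to promote the topological conjugacy $h$ to a $C^1$ diffeomorphism, one combines exponential convergence with the real bounds via the standard distortion argument: for any pair of adjacent atoms $\Delta,\Delta'$ of $\mathcal{P}_n(c_i(f))$ the ratio $|h(\Delta)|/|h(\Delta')|$ differs from $|\Delta|/|\Delta'|$ by a quantity that decays like $\lambda^n$, and summing the resulting telescoping estimates along the dynamical partitions forces the existence of a continuous derivative $Dh>0$. In the bounded-type case, $q_n$ grows exponentially, so the polynomial factors coming from the Yoccoz lemma (Lemma \ref{lemyoccoz}) are absorbed, and one obtains $C^{1+\alpha}$ regularity with $\alpha>0$ depending only on the universal contraction rate $\lambda$. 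The genuinely new work, and the expected main obstacle, is the establishment of complex a-priori bounds for multicritical circle maps with arbitrary (possibly non-integer) critical exponents; the rest of the program is a reasonably standard adaptation of existing unicritical technology.
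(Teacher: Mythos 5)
This statement is a \emph{conjecture}, not a theorem: the paper does not prove it, and in fact the authors explicitly say that a proof ``will require the development of a complete renormalization theory for multicritical circle maps, perhaps including an analogue of the concept of holomorphic commuting pair,'' to be pursued in a forthcoming paper. So there is no proof in the paper against which to measure your attempt, and your proposal is not a proof either --- it is a sketch of a research program, which you yourself acknowledge by calling the establishment of complex a-priori bounds ``the genuinely new work, and the expected main obstacle.''

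That said, your outline is essentially the one the authors have in mind (it is, after all, Sullivan's strategy from \S 1), so there is no \emph{wrong} idea here; the problem is that every nontrivial step remains open. Beyond the complex bounds, which you flag: (a) there is, as of this paper, no established notion of a renormalization operator acting on a space of \emph{multicritical} commuting pairs --- you assert the auxiliary partitions $\mathcal{P}_n^*(c_0)$ ``are already tailored to serve as natural renormalization domains,'' but these are a bookkeeping device for the fine-grid construction, not a dynamical domain on which an $R$ is defined; (b) it is not obvious (and needs proof) that the signature is a complete invariant under renormalization, {\it i.e.\/}, that same-signature maps stay combinatorially paired at every level $n$; (c) the infinite-dimensional Schwarz-lemma step presupposes analyticity/quasi-regularity of $R$ and a compact ambient Banach manifold, neither of which is set up; and (d) even granting exponential $C^0$-convergence of renormalizations, the promotion to a $C^1$ (respectively $C^{1+\alpha}$) conjugacy is itself a separate theorem in the unicritical case (de Faria--de Melo II, Khanin--Teplinsky), and the multicritical version would have to contend with the bridge/critical-spot structure and the saddle-node scales controlled by the Yoccoz Lemma --- a one-line telescoping argument will not do. You have correctly identified the skeleton of the expected proof, but none of the bones is load-bearing yet.
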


This conjecture is formulated as an extension of the corresponding conjecture for critical circle maps with a {\it single\/} 
critical point (in which case the rotation number and the unique critical exponent are the only invariants). In that 
context, the conjecture has been proven in the real-analytic case thanks to the combined efforts of de Faria \cite{dF}, de Faria and de Melo \cite{dFdM,dFdM2}, 
Yampolsky \cite{Yam1,Yam2,Yam} and Khanin and Teplinsky \cite{KT}. Note that in the real-analytic case the critical exponent is necessarily an 
odd integer. Still in the unicritical case, Guarino has shown in his thesis \cite{Gua} (see also \cite{GdM}) that the second part 
of the above conjecture holds true in the $C^3$ category provided the critical exponent is, again, an odd integer. 
For non-integer critical exponents, the conjecture is wide open even in the unicritical case. 
One cannot expect the conjugacy to be $C^{1+\alpha}$ (with positive $\alpha$) for arbitrary rotation numbers. This is shown via (unicritical) examples constructed 
by de Faria and de Melo \cite{dFdM} in the $C^{\infty}$ (or $C^k$) case, and by Avila \cite{Av} in the real analytic case. 
These examples help clarify, to a certain extent, the limits of validity of the above conjecture. 

It is reasonable to expect that a proof of Conjecture \ref{conjecture} will 
require the development of a complete renormalization theory for multicritical circle maps, perhaps including 
an analogue of the concept of holomorphic commuting pair (as defined in \cite{dF}). These matters will be pursued in a forthcoming paper. 
%In particular it will be necessary to know,
%possibly even before developing complex bounds, whether the real bounds obtained in the present paper are {\it beau\/} in the sense of Sullivan 
%(see \S \ref{sec:beau}). 

\section*{Acknowledgements}
We are grateful to Pablo Guarino and Charles Tresser for several useful conversations on these and related matters. We wish to thank 
also Sofia Trejo for an interesting discussion leading to the correct formulation of Lemma \ref{criticlarge}.

%%%%%%%%%%%%%%%%%%%%%%%%%%%%%%%%%%%%%%%%%%%%%%%%%%%%%%%%%%%%%%%%%%%%%%%%%%%%%%

\end{document}